\newcommand{\g}{\mathfrak{g}}
\newcommand{\ad}{\operatorname{ad}}
\newcommand{\codim}{\operatorname{codim}}
\def\ep{\varepsilon}
\def\p{\mathfrak p}
\def\s{\mathfrak s}
\def\h{\mathfrak h}
\def\l{\mathfrak l}
\def\a{\mathfrak a}
\def\m{\mathfrak m}
\def\b{\mathfrak b}
\def\n{\mathfrak n}
\def\q{\mathfrak q}
\def\ep{\varepsilon}
\newtheorem{theorem}{Theorem}[subsection]
\newtheorem{thm}[theorem]{Theorem}
\newtheorem{prop}[theorem]{Proposition}
\newtheorem{lm}[theorem]{Lemma}
\newtheorem{cor}[theorem]{Corollary}
\newtheorem{defi}[theorem]{Definition}
\newtheorem{Rqs}[theorem]{Remarks}
\newtheorem{Rq}[theorem]{Remark}
\newtheorem{nota}[theorem]{Notation}
\title[Symmetric semi-invariants for some Contractions-II-Case B even.]{Symmetric semi-invariants for some In\"on\"u-Wigner Contractions-II-Case B even.}
\author{Florence Fauquant-Millet}
\address{
Universit\'e Jean Monnet, 
ICJ UMR5208, CNRS, Ecole Centrale de Lyon, INSA Lyon, Universite Claude Bernard Lyon 1,
42023 Saint-Etienne, France.}
\email{florence.millet@univ-st-etienne.fr}
\begin{document}

\begin{abstract}

Let $\p$ be a proper parabolic subalgebra of a simple Lie algebra $\g$. Writing $\p=\mathfrak r\oplus\mathfrak m$ with $\mathfrak r$ being the standard Levi factor of $\p$ and $\m$  the nilpotent radical of $\p$, we consider the In\"on\"u-Wigner contraction $\widetilde\p$ of $\p$ with respect to this decomposition : this is the Lie algebra which is the semi-direct product $\mathfrak r\ltimes\m^a$, where $\m^a$ is an abelian ideal of $\widetilde\p$, isomorphic to $\m$ as an $\mathfrak r$-module.
 The study of the algebra of  symmetric semi-invariants $Sy\bigl(\widetilde\p\bigr)$  in the symmetric algebra $S\bigl(\widetilde\p\bigr)$ of $\widetilde\p$ under the adjoint action of $\widetilde\p$ was initiated in [Fauquant-Millet, F., Transformation Groups (2025) DOI : \url{https://doi.org/10.1007/s00031-024-09897-6}],
wherein
a lower bound for the formal character of  the algebra $Sy\bigl(\widetilde\p\bigr)$ was built, when the latter is well defined.
 
Here in this paper we build an upper bound for this formal character, when $\p$  is a maximal parabolic subalgebra in a classical simple Lie algebra $\g$ in type B, when  the Levi subalgebra of $\p$ is associated with the set of all simple roots without a simple root of even index with Bourbaki's notation (we call this case the even case). We show that both bounds coincide. This provides a Weierstrass section for $Sy\bigl(\widetilde\p\bigr)$ and the polynomiality of $Sy\bigl(\widetilde\p\bigr)$ follows. We also obtain that the derived subalgebra $\widetilde\p'$ of $\widetilde\p$ is nonsingular, by computation of the degrees of homogeneous generators of $Sy\bigl(\widetilde\p\bigr)$.

\end{abstract}

\maketitle

{\it Mathematics Subject Classification} : 16 W 22, 17 B 22, 17 B 35.

{\it Key words} : In\"on\" u-Wigner contraction,  parabolic subalgebra, symmetric invariants, semi-invariants, Weierstrass section, polynomiality.

\section{Introduction.}\label{Intro}

The base field $\Bbbk$ is algebraically closed of characteristic zero. Consider a parabolic subalgebra $\p$ in a simple Lie algebra $\g$ and its In\"on\"u-Wigner contraction (or one-parameter contraction) $\widetilde\p=\mathfrak r\ltimes\m^a$ with respect to the decomposition $\p=\mathfrak r\oplus\m$ where $\mathfrak r$ is the standard Levi factor of $\p$ and $\m$ the nilpotent radical of $\p$. 
 As vector spaces and also as $\mathfrak r$-modules, the parabolic subalgebra $\p$ and its contraction $\widetilde\p$ are isomorphic. The latter may be viewed as a degeneration of the former (see for instance \cite[Sec. 4]{Y1} and \cite[Remark 2.3]{Fe}) and is still a Lie algebra, where $\m^a$ is an abelian ideal.
For any finite-dimensional Lie algebra $\a$, denote by $Y(\a)=S(\a)^{\a}$ the algebra of symmetric invariants in the symmetric algebra $S(\a)$ of $\a$ under adjoint action and by $Sy(\a)$ the algebra of symmetric semi-invariants in $S(\a)$ under adjoint action. Of course we have that $Y(\a)\subset Sy(\a)$. Denote also by $\a'=[\a,\,\a]$ the derived subalgebra of $\a$ (where $[\,,\,]$ is the Lie bracket in $\a$).
We continue the study of polynomiality of the algebra $Sy\bigl(\widetilde\p\bigr)$, that we have initiated in our paper \cite{F00}.
If $\p$ is a maximal parabolic subalgebra, then we have that $Sy\bigl(\widetilde\p\bigr)=Y\bigl(\widetilde\p'\bigr)$. More generally there exists a socalled canonical truncation $\widetilde\p_{\Lambda}$ of $\widetilde\p$ such that $Sy\bigl(\widetilde\p\bigr)=Y\bigl(\widetilde\p_{\Lambda}\bigr)=Sy\bigl(\widetilde\p_{\Lambda}\bigr)$ and $\widetilde\p_{\Lambda}$ is an ideal of $\widetilde\p$ which always contains $\widetilde\p'$. This inclusion may be strict, for instance when $\g$ is simple of type A  and $\p$ is a symmetric parabolic subalgebra of $\g$ (that is, the diagonal blocks forming the Levi factor of $\p$ are symmetric with respect to the antidiagonal). On the other hand, when $\p$ is a maximal parabolic subalgebra in any simple Lie algebra $\g$, then $\widetilde\p_{\Lambda}=\widetilde\p'$.
Inspired by \cite{J5}, \cite{FL} and \cite{FL1} we will construct in this paper an adapted pair (see definition below) for the canonical truncation subalgebra $\widetilde\p_{\Lambda}$ of the contraction $\widetilde\p$ of $\p$, 
when $\p$ is a maximal parabolic subalgebra of $\g$ simple of type ${\rm B}$ with a particular set of simple roots (set of simple roots of $\g$ without a simple root of even index in Bourbaki's notation \cite[Planche II]{BOU}). We call such maximal parabolic subalgebra an {\it even} maximal parabolic subalgebra.
Observe that for $\g$ simple of type ${\rm A}$, any maximal parabolic subalgebra $\p$ of $\g$ coincides with its contraction $\widetilde\p$ since the nilpotent radical $\m$ of $\p$  is already abelian in $\p$.

By  \cite[Lem. 6.11]{J6bis} the adapted pair for $\widetilde\p_{\Lambda}$ provides an upper bound for the formal character of $Sy\bigl(\widetilde\p\bigr)=Y\bigl(\widetilde\p_{\Lambda}\bigr)$. We will check that this bound coincides with the lower bound constructed in \cite{F00}. Applying again \cite[Lem. 6.11]{J6bis}, this implies that restriction of functions gives an isomorphism of algebras between $Y\bigl(\widetilde\p_{\Lambda}\bigr)$ and the algebra of polynomial functions on some affine space $\mathscr V$ of the dual space $\widetilde\p_{\Lambda}^*$. This means that $\mathscr V$ is a Weierstrass section for $Sy\bigl(\widetilde\p\bigr)$ in the sense of \cite{FJ4}.
Of course polynomiality of $Sy\bigl(\widetilde\p\bigr)=Y\bigl(\widetilde\p_{\Lambda}\bigr)$ follows. By \cite{FJ4}, $\mathscr V$ is also an affine slice in the sense of \cite[7.3]{J8}.
The study for $Sy(\p)$ will be called the nondegenerate case, whilst we will call the study for $Sy\bigl(\widetilde\p\bigr)$  the degenerate case.

The paper is organized as follows :

Sections 2 to 4 are generalities about In\"on\"u-Wigner contractions of parabolic subalgebras (not necessarily maximal)  in a simple Lie algebra and about adapted pairs and Weierstrass sections for such contractions. These sections will also be used in a future work when we will construct adapted pairs and Weierstrass sections for In\"on\"u-Wigner contractions of other parabolic subalgebras. In Section 3, we give some lemmas which allow to compute the Gelfand-Kirillov dimension of the algebra $Sy\bigl(\widetilde\p\bigr)$, thanks to the Gelfand-Kirillov dimension of the algebra $Sy(\p)$ in the nondegenerate case (which we know by \cite{FJ1}). Moreover in Section 4, we define as in \cite[Sec. 4]{FL1} socalled {\it stationary roots} and we give some interesting properties about them. These stationary roots are very useful to verify some nondegeneracy property which gives that our second element $y$ of the adapted pair is indeed regular for coadjoint action.

Finally Section 5 deals with the case of the In\"on\"u-Wigner contraction of an even maximal parabolic subalgebra of a simple Lie algebra in type B. In particular we show in Thm. \ref{nonsingthm} that in this case the algebra of symmetric semi-invariants $Sy\bigl(\widetilde\p\bigr)$ is a polynomial algebra and admits a Weierstrass section. Moreover we show that the derived subalgebra $\widetilde\p'$ of $\widetilde\p$ is nonsingular, namely that the set of regular elements is big in $\widetilde\p'^*$.

\section{Preliminaries.}
\subsection{Notation}\label{resprel}
 Let $\g$  be a simple Lie algebra over the base field $\Bbbk$. We denote by $[\,,\,]$ the Lie bracket in $\g$. Choose a Cartan subalgebra $\h$ of $\g$, which provides a (finite) root system $\Delta$ of $(\g,\,\h)$. Then choose a set $\pi=\{\alpha_1,\,\ldots,\,\alpha_n\}$ of simple roots, which  defines the set $\Delta^+=\sum_{i=1}^n \mathbb N\alpha_i\cap\Delta$, resp. $\Delta^-=\sum_{i=1}^n(-\mathbb N\alpha_i)\cap\Delta$ of positive, resp. negative, roots for $(\g,\,\h)$, so that $\Delta=\Delta^+\sqcup\Delta^-$.
 For a root $\alpha\in\Delta$, denote by $x_{\alpha}$ a nonzero root vector in $\g$ and by $\g_{\alpha}$ the subspace of $\g$ formed by all root vectors of weight $\alpha$. Recall that $\g_{\alpha}=\Bbbk\, x_{\alpha}=\{x\in\g\mid\forall h\in\h,\,[h,\,x]=\alpha(h)x\}$. We set $\n=\oplus_{\alpha\in\Delta^+}\g_{\alpha}$ and $\n^-=\oplus_{\alpha\in\Delta^-}\g_{\alpha}$. Then we have the triangular decomposition
 $$\g=\n\oplus\h\oplus\n^-.$$
 With every root $\alpha\in\Delta$ is associated a coroot $\alpha^{\vee}$ and we have that $$\h=\bigoplus_{\alpha\in\pi}\Bbbk\,\alpha^{\vee}.$$
 Moreover the elements of $\Delta$ span the dual vector space $\h^*$ of $\h$ (see for instance \cite[19.8.7]{TY}).
 For all $\alpha\in\pi$, we denote by $\varpi_{\alpha}\in\h^*$ the fundamental weight associated with $\alpha$ with respect to $(\g,\,\h,\,\pi)$ and we denote by $P^+(\pi)=\sum_{\alpha\in\pi}\mathbb N\varpi_{\alpha}$ the set of dominant weights.

Let $\pi'$ denote a subset of $\pi$. Set $\Delta_{\pi'}^{\pm}=\Delta^{\pm}\cap(\pm\mathbb N\pi')$ and $\Delta_{\pi'}=\Delta^+_{\pi'}\sqcup\Delta^-_{\pi'}$.  
We set also \begin{equation*}\n_{\pi'}=\oplus_{\alpha\in\Delta_{\pi'}^+}\g_{\alpha};\;\;\;\n^-_{\pi'}=\oplus_{\alpha\in\Delta_{\pi'}^-}\g_{\alpha};\;\;\;\m=\oplus_{\alpha\in\Delta^+\setminus\Delta_{\pi'}^+}\g_{\alpha}.
\end{equation*}

Finally set
\begin{equation}\mathfrak r=\n_{\pi'}\oplus\h\oplus\n^-_{\pi'}.\label{decompr}\end{equation}

By \cite[20.8.6,\, 20.8.8]{TY} the Lie subalgebra $\p=\mathfrak r\oplus\m$ is a  parabolic subalgebra of $(\g,\,\h,\,\pi)$, with $\mathfrak r$ being a Levi factor of $\p$  and $\m$ being the nilpotent radical and also the largest nilpotent ideal of $\p$ (as defined respectively in \cite[29.5.6]{TY},\,\cite[19.6.1]{TY} and  in \cite[19.5.5 and 19.5.8]{TY}).

Such a parabolic subalgebra $\p=\mathfrak r\oplus\m$ is called the standard parabolic subalgebra of $(\g,\,\h,\,\pi)$ associated with the subset $\pi'$ of the set $\pi$ of simple roots (see for instance \cite[1.2]{J5bis}). It contains the standard Borel subalgebra $\b=\h\oplus\n$ of $(\g,\,\h,\,\pi)$. The Levi factor $\mathfrak r$ of $\p$ given by equation (\ref{decompr}) will be called the standard Levi factor of $\p$.

We denote by $\p^-$ the parabolic subalgebra of $\g$ defined by 
$$\p^-=\mathfrak r\oplus\m^-$$
with $\m^-=\oplus_{\alpha\in\Delta^-\setminus\Delta_{\pi'}^-}\g_{\alpha}$ being the nilpotent radical (and also the largest nilpotent ideal) of $\p_-$.
The Lie algebra $\p^-$ is called the opposite parabolic subalgebra of $\p$. Denote by $K$ the Killing form of $\g$. Then the vector space $\p^-$ is isomorphic to the dual vector space $\p^*$ of $\p$ through the Killing form of $\g$.

Set $\h_{\pi'}=\bigoplus_{\alpha\in\pi'}\Bbbk\, \alpha^{\vee}$ and
$\h^{\pi\setminus\pi'}=\{h\in\h\mid \pi'(h)=0\}$, so that  we have that $\h=\h_{\pi'}\oplus\h^{\pi\setminus\pi'}$. By \cite[20.8.6]{TY} the radical ${\rm rad}\,\p$ of $\p$ is
$${\rm rad}\,\p=\h^{\pi\setminus\pi'}\oplus\m.$$

The derived subalgebra $\p'$ of $\p$ is such that 
$\p'=\mathfrak r'\oplus\m$ where $\mathfrak r'$ is the derived subalgebra of $\mathfrak r$. 
We have that \begin{equation}\mathfrak r'=\n_{\pi'}\oplus\h_{\pi'}\oplus\n^-_{\pi'}\end{equation} so that $\mathfrak r'$ is a semisimple Lie algebra such that $\Delta_{\pi'}$ is the root system of $(\mathfrak r',\,\h_{\pi'})$.
Moreover \begin{equation}\p={\rm rad}\,\p\oplus\mathfrak r'.\end{equation}
It follows that $\mathfrak r'$ is a Levi subalgebra of $\p$ by \cite[20.3.5]{TY}.

We denote by $W$, resp. $W'$, the Weyl group of $(\g,\,\h)$, resp. of $(\mathfrak r',\,\h_{\pi'})$ and by $w_0$, resp. $w_0'$, the longest element in $W$, resp. $W'$.

\subsection{In\"on\"u-Wigner contraction}
Recall the definition of an  In\"on\"u-Wigner contraction or one parameter contraction of a Lie algebra $\q$ as defined for instance in \cite[Sec. 3]{Y0} or \cite[Sec. 4]{Y1}.
Let $\q$ be a Lie algebra, $\mathfrak f$ a Lie subalgebra of $\q$ and $V$ a vector subspace of $\q$ such that $\q=\mathfrak f\oplus V$ (we do not require $V$ to be $\mathfrak f$-stable).
With the above decomposition of $\q$ may be associated the Lie algebra $\widetilde\q=\mathfrak f\ltimes V^a$ where $V$ is an abelian ideal of $\widetilde\q$ (that is why it is denoted with a superscript $a$) and where the action of $\mathfrak f$ on $V$ is given by the projection ${\rm pr}_V$ onto $V$. More precisely, if we denote by $[\,,\,]$ the Lie bracket in $\q=\mathfrak f\oplus V$, then the Lie bracket $[\,,\,]_{\widetilde\q}$ in $\widetilde\q=\mathfrak f\ltimes V^a$ is given by the following.
\begin{align}\forall \xi,\,\eta\in\mathfrak f,\,\forall v,\, w\in V,\,\,[\xi,\, \eta]_{\widetilde\q}=[\xi,\,\eta],\,[\xi,\,v]_{\widetilde\q}={\rm pr}_V([\xi,\,v]),\,[v,\,w]_{\widetilde\q}=0.\label{deficontract}\end{align}

It is easy to check that actually $(\widetilde\q,\,[\,,\,]_{\widetilde\q})$ is a Lie algebra. Moreover $\widetilde\q$ may be viewed as a degeneration of the Lie algebra $\q$, since it can be obtained by passing to the limit to zero a $t$-commutator $[\,,\,]_t$ defined on the vector space $\q$ (and which defines on $\q$ a new Lie structure, isomorphic to the Lie structure on $(\q,\,[\,,\,])$, when $t\neq 0$). See \cite[Sec. 3]{Y0} for more details.

In particular when $V$ is $\mathfrak f$-stable then there is no need to apply the projection onto $V$.

Thus we can define the In\"on\"u-Wigner contration $\widetilde\p$ of $\p=\mathfrak r\oplus\m$ by setting $\widetilde\p=\mathfrak r\ltimes\m^a$
which is isomorphic to $\p$ as a vector space and is endowed with
a Lie bracket $[\,,\,]_{\widetilde\p}$ defined by the following.
\begin{align}\forall z,\,z'\in\mathfrak r,\,\forall x,\,x'\in\m,\,\,[z,\,z']_{\widetilde\p}=[z,\,z'],\,[z,\,x]_{\widetilde\p}=[z,\,x],\,[x,\,x']_{\widetilde\p}=0\label{brackettildep}\end{align}
where recall $[\,,\,]$ is the Lie bracket in $\g$. 

In a same way  the In\"on\"u-Wigner contraction $\widetilde\p^-=\mathfrak r\ltimes(\m^-)^a$ of $\p^-=\mathfrak r\oplus\m^-$ is a Lie algebra  for the Lie bracket $[\,,\,]_{\widetilde\p^-}$ defined as follows.
\begin{align}\forall z,\,z'\in\mathfrak r,\,\forall y,\,y'\in\m^-,\,\,[z,\,z']_{\widetilde\p^-}=[z,\,z'],\,[z,\,y]_{\widetilde\p^-}=[z,\,y],\,[y,\,y']_{\widetilde\p^-}=0.\end{align}

\subsection{Symmetric semi-invariants}\label{symsi}
Let $\a$ be a finite-dimensional Lie algebra.  We denote by $\a'$ the derived subalgebra of $\a$, by $\mathfrak z(\a)$ the centre of $\a$ and by $S(\a)$ the symmetric algebra of $\a$. The algebra $S(\a)$ may also be identified with the algebra $\Bbbk[\a^*]$ of polynomial functions on the dual space $\a^*$ of $\a$. The adjoint action of $\a$ on itself given by Lie bracket may be uniquely extended by derivation on the associative and commutative algebra $S(\a)$, and we still call it the adjoint action of $\a$ on $S(\a)$ and denote it by $\ad$. 
The algebra of symmetric invariants $Y(\a)=S(\a)^{\a}$ in $S(\a)$ under adjoint action of $\a$ is defined as follows.
\begin{equation}Y(\a)=\{s\in S(\a)\mid \forall x\in \a,\,\ad x(s)=0\}.\end{equation}

The algebra of symmetric semi-invariants $Sy(\a)$ in $S(\a)$ under adjoint action of $\a$ is defined as follows.

\begin{equation}Sy(\a)=\oplus_{\lambda\in\a^*}S(\a)_{\lambda}\end{equation}
where, for all $\lambda\in\a^*$
\begin{equation}S(\a)_{\lambda}=\{s\in S(\a)\mid \forall x\in \a,\,\ad x(s)=\lambda(x)s\}.\end{equation}
The vector space $Sy(\a)$ is indeed an algebra since, for all $\lambda,\,\mu\in\a^*$, one has
$$S(\a)_{\lambda}S(\a)_{\mu}\subset S(\a)_{\lambda+\mu}.$$
When $S(\a)_{\lambda}\neq\{0\}$, we call $\lambda$ a weight of $Sy(\a)$ and $S(\a)_{\lambda}$ the vector space of semi-invariants of weight $\lambda$. 
We denote by $\Lambda(\a)$ the set of weights of $Sy(\a)$.
Of course we always have that 
\begin{equation}Y(\a)=S(\a)_0\subset Sy(\a)\end{equation}
and 
\begin{equation} Sy(\a)\subset S(\a)^{\a'}=\{s\in S(\a)\mid\forall x\in\a',\,\ad x(s)=0\}\label{inc}\end{equation}
by say \cite[Chap. I, Sec. B, 5.13]{F}.

\begin{Rq}\label{semiinvp}\rm
When $\a=\p$, resp.  $\a=\widetilde\p$,  we have equality in (\ref{inc}).
Indeed $\p=\p'\oplus\h^{\pi\setminus\pi'}$, resp. $\widetilde\p=\widetilde\p'\oplus\h^{\pi\setminus\pi'}$ with $\p'=\mathfrak r'\oplus\m$, resp. $\widetilde\p'=\mathfrak r'\ltimes\m^a$  and the commutative subalgebra $\h^{\pi\setminus\pi'}$ of $\p$, resp. of $\widetilde\p$,  acts (rationally) ad-reductively on $\p$, resp. on $\widetilde\p$. Therefore inclusion (\ref{inc}) becomes an equality in case $\a=\p$, resp. $\a=\widetilde\p$.
 Moreover the set $\Lambda(\p)$, resp. $\Lambda\bigl(\widetilde\p\bigr)$, may be viewed as a subset of $\h^*$ and even of $(\h^{\pi\setminus\pi'})^*$.
\end{Rq}

More generally for every $\h$-module $M$, the set $\Lambda(M)$ of weights of $M$ is defined to be
$$\Lambda(M)=\{\lambda\in\h^*\mid M_{\lambda}\neq\{0\}\}$$
where, for all $\lambda\in\h^*$, $M_{\lambda}=\{m\in M\mid\forall h\in\h,\,h.m=\lambda(h)m\}$ which is called the weight subspace of $M$ of weight $\lambda$.

Observe that the set $\Lambda(Sy(\p))$, resp. $\Lambda(Sy\bigl(\widetilde\p)\bigr)$, of weights of $Sy(\p)$, resp. of $Sy\bigl(\widetilde\p\bigr)$, is simply denoted by $\Lambda(\p)$, resp. $\Lambda(\widetilde\p)$.

\subsection{Canonical truncation}\label{truncation}
Assume from now on that $\a\subset\g\l(V)$ is algebraic  (with $V$ being a finite-dimensional vector space)  : see for instance \cite[Chap. VI, beginning]{F} for a definition. Then by \cite[Lem. 6.1]{BGR} there exists a canonically determined ideal 
$\a_{\Lambda}$ of $\a$ such that \begin{equation}Sy(\a)=Y(\a_{\Lambda})=Sy(\a_{\Lambda}).\label{semi}\end{equation}
The Lie algebra $\a_{\Lambda}$ is also algebraic and we call it the canonical truncation of $\a$.
More precisely $\a_{\Lambda}$ is the largest ideal of $\a$ on which every weight of $Sy(\a)$ vanishes. In other words \begin{equation}\a_{\Lambda}=\cap_{\lambda\in\Lambda(\a)}\ker(\lambda).\label{deftrunc}\end{equation}

We always have that (\cite[Chap. I, Sec. B, 7.2]{F})
\begin{equation}\a'\subset\a_{\Lambda}\label{derinc}\end{equation} but the inclusion may be strict, even for a parabolic subalgebra (see for instance \cite[Chap. V, Sec. B, 3.1]{F}). 

Note that the Lie algebra $\p$ is algebraic (see for instance \cite[Chap. I, Sec. B, 6.4]{F}) and its  canonical truncation $\p_{\Lambda}$  is given by the equality
\begin{equation*}\p_{\Lambda}=\p'\oplus\h_{\Lambda}\end{equation*} with \begin{equation}\h_{\Lambda}=\h^{\pi\setminus\pi'}\cap\p_{\Lambda}\label{truncp}\end{equation} 
(see \cite[Proof of Lemma 5.2.2 and Proof of Cor. 5.2.10]{FJ3}).

\begin{Rq}\label{Rqtruncation}\rm
As we already said, the vector space  $\h_{\Lambda}$ may be reduced to $\{0\}$ or not. For instance $\h_{\Lambda}=\{0\}$ whenever $\p$ is a maximal parabolic subalgebra of any simple Lie algebra $\g$ or whenever $w_0=-{\rm Id}$ that is, for any simple Lie algebra $\g$ outside type ${\rm A}_n$, ${\rm D}_{2n+1}$, or  ${\rm E}_6$ and for any parabolic subalgebra $\p$  of $\g$ (see for instance \cite[2.2]{FL}).
For an example where $\h_{\Lambda}\neq \{0\}$, consider the simple Lie algebra $\g$ of type ${\rm A}_3$ and $\p$ the symmetric parabolic subalgebra of $\g$ associated with $\pi'=\{\alpha_2\}$ (Bourbaki's notation, \cite[Planche I]{BOU}). Then one has that $\h_{\Lambda}=\Bbbk\mathscr H^{-1}(\varpi_{\alpha_1}-\varpi_{\alpha_3})$, where $\mathscr H:\h\longrightarrow\h^*$ is the isomorphism induced by the Killing form of $\g$.
\end{Rq}


\subsection{Centre and nilpotent radical of the contraction}\label{prop}
Similarly as for $\p$  we will see in Corollary \ref{propcor1} below that the contraction $\widetilde\p$ of  $\p$ is algebraic and then that equation (\ref{semi}) holds with $\a=\widetilde\p$ (see Corollary \ref{propcor2}).

\begin{lm}\label{proplm}
Let $\p=\mathfrak r\oplus\m$ be a standard parabolic subalgebra in a simple Lie algebra $\g$ with $\mathfrak r$ being the standard Levi factor of $\p$ and $\m$ the nilpotent radical of $\p$.  Let $\widetilde\p$ be the In\"on\"u-Wigner contraction with respect  to the above decomposition. We have that 
\begin{equation}\mathfrak z\bigl(\widetilde\p\bigr)=\{0\}\end{equation}
where $\mathfrak z\bigl(\widetilde\p\bigr)$ denotes the centre of $\widetilde\p$.

\end{lm}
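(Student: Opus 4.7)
The plan is to take an arbitrary element $z=z_r+z_m$ of $\mathfrak z\bigl(\widetilde\p\bigr)$ with $z_r\in\mathfrak r$ and $z_m\in\m$, and extract two independent sets of constraints on $z_r,z_m$ by bracketing $z$ separately with elements of $\mathfrak r$ and of $\m^a$. These constraints together will force $z_r=z_m=0$.

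Bracketing $z$ against an arbitrary $x\in\mathfrak r$ via (\ref{brackettildep}), and using that $\m$ is an ideal of $\p$ (so $[z_m,x]=-[x,z_m]\in\m$), one gets $[z,x]_{\widetilde\p}=[z_r,x]+[z_m,x]$ with the two summands lying respectively in $\mathfrak r$ and $\m$; vanishing then forces $z_r\in\mathfrak z(\mathfrak r)$ and $z_m$ to be $\mathfrak r$-invariant. Bracketing $z$ against $x\in\m$ produces only $[z_r,x]$, since $\m^a$ is an abelian ideal in $\widetilde\p$, so $z_r$ must also centralize $\m$ inside $\g$. At this stage no step is a real obstacle; the argument is just a careful unwinding of the semidirect bracket (\ref{brackettildep}), and the only point to keep in mind is that replacing $\m$ by its abelian degeneration $\m^a$ does not enlarge the centre because the $\m^a$-part of a central element must still be $\mathfrak r$-invariant.

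To conclude $z_r=0$, I would identify $\mathfrak z(\mathfrak r)$ using the decomposition $\mathfrak r=\mathfrak r'\oplus\h^{\pi\setminus\pi'}$ from \ref{resprel}: since $\mathfrak r'$ is semisimple and $\h^{\pi\setminus\pi'}$ commutes with $\mathfrak r'$ (every root of $\mathfrak r'$ lies in $\sum_{\alpha\in\pi'}\Z\alpha$ and hence vanishes on $\h^{\pi\setminus\pi'}$), one obtains $\mathfrak z(\mathfrak r)=\h^{\pi\setminus\pi'}$. Such a $z_r$ already vanishes on $\pi'$, and centralizing $\m$ forces $\beta(z_r)=0$ for each $\beta\in\pi\setminus\pi'$ as well, because $\g_\beta\subset\m$ for such $\beta$. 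Since $\pi$ is a basis of $\h^*$, this yields $z_r=0$. Finally, $\mathfrak r$-invariance of $z_m$ applied to $\h\subset\mathfrak r$ is incompatible with $\m$ being a direct sum of $\h$-weight spaces for nonzero weights, so each weight component of $z_m$ must vanish, and $z_m=0$. Thus $\mathfrak z\bigl(\widetilde\p\bigr)=\{0\}$.
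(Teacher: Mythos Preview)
Your proof is correct. Both yours and the paper's argument ultimately reduce to the $\h$-action, but you organise the computation differently: the paper decomposes a central element as $x=h+x'$ with $h\in\h$ and $x'\in\g_{\Delta(\pi')}$, first brackets with $\h$ to kill $x'$ at once (since all weights in $\Delta(\pi')$ are nonzero), and then brackets the remaining $h\in\h$ with root vectors $x_\alpha$, $\alpha\in\Delta(\pi')$, to force $\alpha(h)=0$ on all of $\Delta$; this is a two-line argument. You instead use the Levi decomposition $z=z_r+z_m$ and the explicit semidirect bracket, which makes the role of the contraction transparent (the $\m^a$-part contributes nothing when bracketed with $\m$) and isolates the intermediate facts $z_r\in\mathfrak z(\mathfrak r)=\h^{\pi\setminus\pi'}$ and $z_m\in\m^{\mathfrak r}$ before concluding. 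Your route is slightly longer because you pass through the identification of $\mathfrak z(\mathfrak r)$, but it highlights more clearly why abelianising $\m$ does not enlarge the centre; the paper's route is more economical since it bypasses the Levi structure entirely and works directly with the Cartan grading.
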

\begin{proof}
The proof is similar as in \cite[20.8.6]{TY}. We give it for the reader's convenience. Set $\Delta(\pi')=\Delta^+\sqcup \Delta^-_{\pi'}\subset \Delta$. Observe that the set $\Delta(\pi')$ is a parabolic subset of $\Delta$ as defined in \cite[18.10.1]{TY}. Set $\g_{\Delta(\pi')}=\bigoplus_{\alpha\in\Delta(\pi')}\g_{\alpha}$ so that $\p=\h\oplus\g_{\Delta(\pi')}=\widetilde\p$  as a vector space. Let $x=h+x'\in\mathfrak z\bigl(\widetilde\p\bigr)$ with $h\in\h$ and $x'\in\g_{\Delta(\pi')}$. Since by equation (\ref{brackettildep}) $\bigl[x,\,\h\bigr]_{\widetilde\p}=[x,\,\h]=[x',\,\h]=0$ it follows that $x'=0$. Next we have that $\bigl[h,\,x_{\alpha}\bigr]_{\widetilde\p}=[h,\,x_{\alpha}]=\alpha(h)x_{\alpha}=0$ for all $\alpha\in\Delta(\pi')$. Since $\Delta=\Delta(\pi')\cup(-\Delta(\pi'))$, it follows that $h=0$.
\end{proof}

\begin{prop}\label{propprop}

We keep the  hypotheses of the above Lemma. 

Then the radical of $\widetilde\p$ is $\h^{\pi\setminus\pi'}\oplus\m$
and  $\m$ is the largest nilpotent ideal of $\widetilde\p$ and is also its nilpotent radical.

\end{prop}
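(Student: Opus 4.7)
The plan is to mimic the classical argument \cite[20.8.6]{TY} used for $\p$ itself, exploiting the semidirect decomposition $\widetilde\p=\mathfrak r\ltimes\m^a=\mathfrak r'\oplus\h^{\pi\setminus\pi'}\oplus\m^a$ together with the bracket relations (\ref{brackettildep}). First I would show that $\h^{\pi\setminus\pi'}\oplus\m$ is a solvable ideal of $\widetilde\p$. It is an ideal because $\mathfrak r$ centralizes $\h^{\pi\setminus\pi'}$ (roots in $\Delta_{\pi'}$ lie in $\mathbb Z\pi'$ and so vanish on $\h^{\pi\setminus\pi'}$), $\m^a$ is $\mathfrak r$-stable by the definition of $\widetilde\p$, and $[\m,\,\h^{\pi\setminus\pi'}]_{\widetilde\p}=[\m,\,\h^{\pi\setminus\pi'}]\subset\m$ by (\ref{brackettildep}). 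Solvability is immediate, since its derived ideal lies in $\m$, which is abelian in $\widetilde\p$. As the quotient $\widetilde\p/(\h^{\pi\setminus\pi'}\oplus\m)$ is isomorphic to the semisimple algebra $\mathfrak r'$, this ideal equals ${\rm rad}\,\widetilde\p$.

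Next, $\m$ is an abelian, hence nilpotent, ideal of $\widetilde\p$. To see it is the largest such, let $I$ be any nilpotent ideal of $\widetilde\p$. Solvability of $I$ gives $I\subset{\rm rad}\,\widetilde\p=\h^{\pi\setminus\pi'}\oplus\m$. By the standard inclusion $\ad_{\widetilde\p}(z)^k\bigl(\widetilde\p\bigr)\subset I^{(k)}$ (the $k$-th term of the lower central series of $I$), every $z\in I$ acts nilpotently on $\widetilde\p$. Writing $z=h+x$ with $h\in\h^{\pi\setminus\pi'}$ and $x\in\m$, the restriction of $\ad_{\widetilde\p}(z)$ to $\m^a$ reduces by (\ref{brackettildep}) to $\ad(h)\vert_\m$, because $[x,\,\m]_{\widetilde\p}=0$. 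This restriction is diagonal in the root basis with eigenvalues $\beta(h)$ for $\beta\in\Delta^+\setminus\Delta^+_{\pi'}$, so its nilpotence forces $\beta(h)=0$ for every such $\beta$, in particular for every $\beta\in\pi\setminus\pi'$. Combined with $\pi'(h)=0$, this yields $\pi(h)=0$, hence $h=0$ and $z\in\m$.

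Finally, to identify $\m$ with the nilpotent radical in the sense of \cite[19.6.1]{TY}, I would intersect with the derived subalgebra. The relations (\ref{brackettildep}) give $\widetilde\p'=\mathfrak r'\oplus\m$; the inclusion $\m\subset\widetilde\p'$ holds because the restriction to $\h^{\pi\setminus\pi'}$ of every $\beta\in\pi\setminus\pi'$ is nonzero, so that $[\h^{\pi\setminus\pi'},\,\m]=\m$. Combining with the first step, $\widetilde\p'\cap{\rm rad}\,\widetilde\p=(\mathfrak r'\oplus\m)\cap(\h^{\pi\setminus\pi'}\oplus\m)=\m$, as required.

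The only slightly delicate point is the second step, where one must remember to use the contracted bracket: the vanishing $[x,\,y]_{\widetilde\p}=0$ for $x,\,y\in\m$ is what makes $\ad_{\widetilde\p}(z)\vert_\m$ depend only on the semisimple component $h$ of $z$, rendering the weight argument effective. All remaining verifications are routine applications of (\ref{brackettildep}).
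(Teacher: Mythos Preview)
Your proof is correct and follows essentially the same route as the paper: identify the radical via the semisimple quotient $\mathfrak r'$, compute the nilpotent radical as $\widetilde\p'\cap{\rm rad}\,\widetilde\p$, and show $\m$ is the largest nilpotent ideal by checking that any element of $\h^{\pi\setminus\pi'}$ lying in a nilpotent ideal must vanish. The only cosmetic difference is in this last step: the paper observes that such an element is simultaneously $\ad$-nilpotent and $\ad$-semisimple, hence central, and then invokes Lemma~\ref{proplm} ($\mathfrak z(\widetilde\p)=\{0\}$); you instead restrict $\ad_{\widetilde\p}(z)$ to the invariant subspace $\m$ and read off the eigenvalues directly, which is a self-contained variant of the same idea.
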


\begin{proof}

Let us show that $\m\oplus\h^{\pi\setminus\pi'}$ is the  radical ${\rm rad}\,\widetilde\p$ of $\widetilde\p$. We check easily that $\m\oplus\h^{\pi\setminus\pi'}$ is an ideal of $\widetilde\p$ and that the Lie quotient algebra $\widetilde\p/(\m\oplus\h^{\pi\setminus\pi'})$ is isomorphic to the semisimple Lie algebra $\mathfrak r'$. Then  by \cite[1.4.3]{D} we have that ${\rm rad}\,\widetilde\p\subset \m\oplus\h^{\pi\setminus\pi'}$. Moreover since $\m$ is abelian in $\widetilde\p$ and since $[\h^{\pi\setminus\pi'},\,\m]_{\widetilde\p}\subset\m$, it follows by definition \cite[1.3.7 (i)]{D} that $\m\oplus\h^{\pi\setminus\pi'}$ is a solvable ideal of $\widetilde\p$. This implies equality ${\rm rad}\,\widetilde\p= \m\oplus\h^{\pi\setminus\pi'}$ by definition of the radical \cite[1.4.1]{D}. Now $\m$ is  the nilpotent radical of $\widetilde\p$, because by \cite[1.7.1, 1.7.2]{D} the latter is equal to $\bigl[\widetilde\p,\,\widetilde\p\bigr]_{\widetilde\p}\cap{\rm rad}\,\widetilde\p=\widetilde\p'\cap{\rm rad}\,\widetilde\p$ 
and because $\widetilde\p'=\mathfrak r'\ltimes\m^a$.

Moreover since $\m$ is an abelian ideal of $\widetilde\p$ we have that, for all $x\in\m$, $\ad_{\m} x=0$ and then $(\ad_{\widetilde\p}x)^2=0$. It follows by \cite[1.4.7]{D} that $\m$ is contained in the largest nilpotent ideal of $\widetilde\p$ : denote it by $\n_{\widetilde\p}$. 
Since $\bigl[\h^{\pi\setminus\pi'},\,\n_{\widetilde\p}\bigr]_{\widetilde\p}\subset\n_{\widetilde\p}$ and since $\n_{\widetilde\p}$ is nilpotent, it follows that $\n_{\widetilde\p}+\h^{\pi\setminus\pi'}$ is a solvable ideal of $\widetilde\p$.
Then $\n_{\widetilde\p}+\h^{\pi\setminus\pi'}\subset {\rm rad}\,\widetilde\p= \m\oplus\h^{\pi\setminus\pi'}$.

Let $x\in\n_{\widetilde\p}$ and write $x=x'+z$ with $x'\in\m$ and $z\in\h^{\pi\setminus\pi'}$. Since $\m\subset\n_{\widetilde\p}$ by the above, we have that $z\in\n_{\widetilde\p}\cap\h$.   

By definition of $\n_{\widetilde\p}$, we have that ${\rm ad}_{\widetilde\p}\, z$ is nilpotent. The element $z\in\h$ acting also $\ad_{\widetilde\p}$-reductively, this implies that $z$ belongs to the centre of $\widetilde\p$ and then that $z=0$ by the above Lemma. Thus $\m=\n_{\widetilde\p}$.
\end{proof}
\begin{cor}\label{propcor1}
With the same hypotheses and notation as before, the Lie algebra $\widetilde\p$ is algebraic in $\g\mathfrak l\bigl(\widetilde\p\bigr)$.
 
\end{cor}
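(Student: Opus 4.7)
The plan is to exhibit $\widetilde\p$ as the Lie algebra of a connected affine algebraic group, and then to embed it into $\g\mathfrak l\bigl(\widetilde\p\bigr)$ via the adjoint representation; the triviality of the centre established in Lemma \ref{proplm} is what guarantees that this embedding is injective.

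First I would use that the standard Levi factor $\mathfrak r$ of $\p$ is reductive and, being a Levi subalgebra of the algebraic parabolic $\p$, is itself algebraic. Let $R$ denote a connected affine algebraic group with $\operatorname{Lie}(R)=\mathfrak r$. The $\mathfrak r$-module structure on $\m$ is the restriction of the adjoint action of $\g$, hence it is rational and integrates to an algebraic representation $R\to \operatorname{GL}(\m)$. Viewing $\m^a$ as the commutative vector group $\mathbb G_a^{\dim\m}$, we may form the semidirect product $\widetilde P:=R\ltimes\m^a$, which is again a connected affine algebraic group.

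Second, I would verify that $\operatorname{Lie}\bigl(\widetilde P\bigr)$ coincides with $\widetilde\p$ as Lie algebras. Both equal $\mathfrak r\oplus\m$ as vector spaces; the bracket on $\operatorname{Lie}(R)=\mathfrak r$ is inherited from $\g$; the differential of the $R$-action on $\m^a$ recovers the ordinary adjoint action $[z,x]$ for $z\in\mathfrak r$ and $x\in\m$ (which here coincides with ${\rm pr}_{\m}[z,x]$ because $\m$ is already $\mathfrak r$-stable); and the bracket on $\m^a$ vanishes because $\m^a$ is abelian. Comparison with (\ref{brackettildep}) then gives the desired equality of Lie algebras.

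Finally, Lemma \ref{proplm} yields $\mathfrak z\bigl(\widetilde\p\bigr)=\{0\}$, so that $\ad\colon\widetilde\p\to\g\mathfrak l\bigl(\widetilde\p\bigr)$ is injective. Since $\operatorname{Ad}\colon\widetilde P\to\operatorname{GL}\bigl(\widetilde\p\bigr)$ is a morphism of affine algebraic groups, its image is a closed algebraic subgroup of $\operatorname{GL}\bigl(\widetilde\p\bigr)$, and its Lie algebra is precisely $\ad\bigl(\widetilde\p\bigr)\cong\widetilde\p$. This identifies $\widetilde\p$ with an algebraic Lie subalgebra of $\g\mathfrak l\bigl(\widetilde\p\bigr)$, which is the required conclusion. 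I expect the only nontrivial step to be the bracket comparison in the middle paragraph; once that is in hand, everything else is formal bookkeeping with the semidirect-product construction and the functoriality of $\operatorname{Lie}$ and $\operatorname{Ad}$.
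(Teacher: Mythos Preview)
Your proof is correct and takes a genuinely different route from the paper's. The paper invokes a structural criterion of Joseph \cite[Prop.~2.4.5]{J2}: a Lie subalgebra of $\g\l(V)$ is algebraic whenever the set of $\ad$-nilpotent elements of its radical is complemented by a \emph{rationally reductive} subalgebra (a product of a semisimple algebra and an abelian algebra of $\ad$-semisimple elements with rational eigenvalues). To feed this criterion, the paper uses Proposition~\ref{propprop} to identify $\m$ as the set of $\ad_{\widetilde\p}$-nilpotent elements in ${\rm rad}\,\widetilde\p=\m\oplus\h^{\pi\setminus\pi'}$, and then notes that the complement $\mathfrak r=\mathfrak r'\oplus\h^{\pi\setminus\pi'}$ is rationally reductive in $\g\l(\widetilde\p)$. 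Your approach bypasses this criterion entirely: you build the algebraic group $\widetilde P=R\ltimes\m^a$ directly and pass to $\g\l(\widetilde\p)$ via $\operatorname{Ad}$, using Lemma~\ref{proplm} only at the end for injectivity. This is more elementary and self-contained; the one place that deserves a word of care is the integration step, which is cleanest if you take $R$ to be the Levi subgroup of the parabolic $P\subset G$, so that the $R$-action on $\m$ is the restriction of $\operatorname{Ad}_G$ and hence automatically algebraic. The paper's route, by contrast, packages the group construction into the cited black-box criterion and reuses the radical computation already needed elsewhere.
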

\begin{proof}

Since the centre of $\widetilde\p$ is reduced to $\{0\}$ by Lemma \ref{proplm}, the Lie algebra $\widetilde\p$ may be viewed as a subalgebra of $\g\mathfrak l\bigl(\widetilde\p\bigr)$. Recall that $\widetilde\p=\mathfrak r\oplus\m=\mathfrak r'\oplus\h^{\pi\setminus\pi'}\oplus\m$ as a vector space. By the above Proposition, $\m$ is the set of $\ad_{\widetilde\p}$-nilpotent elements in ${\rm rad}\,\widetilde\p=\m\oplus\h^{\pi\setminus\pi'}$. 

Moreover by definition in \cite[2.4.5]{J2}, the Lie subalgebra $\mathfrak r$ is rationally reductive in $\g\mathfrak l\bigl(\widetilde\p\bigr)$, namely it is the product of a semisimple Lie subalgebra $\mathfrak r'$ and of an abelian subalgebra $\h^{\pi\setminus\pi'}$ of $\ad_{\widetilde\p}$-semisimple elements, so that $\h^{\pi\setminus\pi'}$ admits a basis whose members have rational eigenvalues.
The assertion follows by \cite[Prop. 2.4.5]{J2} since $\m$ is complemented in $\widetilde\p$ by the rationally reductive Lie subalgebra $\mathfrak r$ in $\g\mathfrak l\bigl(\widetilde\p\bigr)$. \end{proof}
\begin{cor}\label{propcor2}
With the same hypotheses and notation as before, we have  that \begin{equation}Sy\bigl(\widetilde\p\bigr)=Y\bigl(\widetilde\p_{\Lambda}\bigr)=Sy\bigl(\widetilde\p_{\Lambda}\bigr)\label{egSy}\end{equation}
where $\widetilde\p_{\Lambda}$ is the canonical truncation of $\widetilde\p$.
\end{cor}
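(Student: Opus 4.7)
The statement follows almost immediately by combining two facts already established in the excerpt, so the proof plan is very short.

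First, I would recall the general result stated in Section \ref{truncation}: for any algebraic Lie algebra $\a \subset \g\mathfrak l(V)$, Borho--Gabriel--Rentschler's \cite[Lem. 6.1]{BGR} provides a canonically determined ideal $\a_\Lambda$ (given explicitly by (\ref{deftrunc}) as the intersection of the kernels of all weights of $Sy(\a)$) satisfying the equality (\ref{semi}), namely
\begin{equation*}
Sy(\a) = Y(\a_\Lambda) = Sy(\a_\Lambda).
\end{equation*}

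Second, I would invoke Corollary \ref{propcor1}, which establishes precisely that $\widetilde\p$ is algebraic in $\g\mathfrak l(\widetilde\p)$. Applying the general result to $\a = \widetilde\p$ then yields the desired equality (\ref{egSy}) with $\widetilde\p_\Lambda$ being the canonical truncation of $\widetilde\p$.

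There is no real obstacle here; the content of the corollary is entirely absorbed by the preceding Corollary \ref{propcor1} (which itself is the substantive input, relying on Lemma \ref{proplm} to see that $\widetilde\p$ embeds in $\g\mathfrak l(\widetilde\p)$, on Proposition \ref{propprop} to identify the nilpotent radical, and on Jantzen's criterion \cite[Prop. 2.4.5]{J2} via the decomposition $\widetilde\p = \mathfrak r \oplus \m$ with $\mathfrak r$ rationally reductive). Thus the proof reduces to citing Corollary \ref{propcor1} together with (\ref{semi}).
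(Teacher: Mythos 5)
Your proof is correct and coincides exactly with the paper's own argument: the paper likewise deduces Corollary \ref{propcor2} directly from Corollary \ref{propcor1} (algebraicity of $\widetilde\p$) together with equation (\ref{semi}) (the Borho--Gabriel--Rentschler truncation result). Your additional commentary about where the substantive work lies (in Lemma \ref{proplm}, Proposition \ref{propprop}, and \cite[Prop. 2.4.5]{J2}) is accurate but not part of the proof itself.
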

\begin{proof}
It is a consequence of the above Corollary and of equation (\ref{semi}). 
\end{proof}

\subsection{The contraction of the canonical truncation}\label{trunc}

Denote by $\widetilde{\p_{\Lambda}}$ the In\"on\"u-Wigner contraction of the canonical truncation $\p_{\Lambda}$ of $\p$ with respect to the decomposition
\begin{equation}\p_{\Lambda}=\mathfrak r_{\rm trunc}\oplus\m\label{decomptruncp}\end{equation} where $\mathfrak r_{\rm trunc}=\mathfrak r'\oplus\h_{\Lambda}$ with $\h_{\Lambda}$ the vector subspace of $\h^{\pi\setminus\pi'}$ given by (\ref{truncp}). We may pay attention that $\mathfrak r_{\rm trunc}$ is not  the canonical truncation $\mathfrak r_{\Lambda}$ of the reductive Lie algebra $\mathfrak r$. Indeed $\mathfrak r_{\Lambda}=\mathfrak r$ since $\Lambda(\mathfrak r)=\{0\}$ : actually one has that $Sy(\mathfrak r)=Y(\mathfrak r')S(\h^{\pi\setminus\pi'})=Y(\mathfrak r)$.
 By equation (\ref{deficontract}) one has that
\begin{equation}\widetilde{\p_{\Lambda}}=\mathfrak r_{\rm trunc}\ltimes\m^a\label{contplamb}\end{equation}
and $\widetilde{\p_{\Lambda}}$ is a Lie subalgebra of $\widetilde\p$ by equation (\ref{brackettildep}).

Similarly as for $\widetilde\p$, we have the following Lemma.

\begin{lm}\label{truncalgebraic}
With the above notation, the In\"on\"u-Wigner contraction $\widetilde{\p_{\Lambda}}$ is algebraic in $\g\mathfrak l\bigl(\widetilde\p\bigr)$.

\end{lm}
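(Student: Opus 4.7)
The plan is to mirror the argument of Corollary \ref{propcor1}, this time applied to the Lie subalgebra $\widetilde{\p_{\Lambda}}$ of $\widetilde\p$. First I would observe that, since $\mathfrak z\bigl(\widetilde\p\bigr)=\{0\}$ by Lemma \ref{proplm}, the restriction of $\ad_{\widetilde\p}$ to $\widetilde{\p_{\Lambda}}$ is injective, so that $\widetilde{\p_{\Lambda}}$ embeds as a Lie subalgebra of $\g\mathfrak l\bigl(\widetilde\p\bigr)$. This fixes the ambient associative algebra in which algebraicity is to be checked.

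Next I would use the decomposition (\ref{contplamb}), $\widetilde{\p_{\Lambda}}=\mathfrak r'\oplus\h_{\Lambda}\oplus\m^a$, and identify its structural pieces. The ideal $\m$ remains abelian in $\widetilde{\p_{\Lambda}}$, and each $x\in\m$ satisfies $\bigl(\ad_{\widetilde\p}x\bigr)^2=0$ by the computation already carried out in the proof of Proposition \ref{propprop}. The radical of $\widetilde{\p_{\Lambda}}$ is $\h_{\Lambda}\oplus\m$: this subspace is solvable (since $\m$ is an abelian ideal, $\h_{\Lambda}$ is commutative, and $[\h_{\Lambda},\m]_{\widetilde\p}\subset\m$), while the quotient $\widetilde{\p_{\Lambda}}/(\h_{\Lambda}\oplus\m)$ is isomorphic to the semisimple algebra $\mathfrak r'$. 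Within this radical, $\m$ is precisely the set of $\ad_{\widetilde\p}$-nilpotent elements, since every nonzero element of $\h_{\Lambda}\subset\h^{\pi\setminus\pi'}$ acts $\ad_{\widetilde\p}$-semisimply on $\widetilde\p$.

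Finally, the complementary subalgebra $\mathfrak r_{\rm trunc}=\mathfrak r'\oplus\h_{\Lambda}$ is rationally reductive in $\g\mathfrak l\bigl(\widetilde\p\bigr)$ in the sense of \cite[2.4.5]{J2}: $\mathfrak r'$ is semisimple, $\h_{\Lambda}\subset\h^{\pi\setminus\pi'}$ is an abelian Lie subalgebra consisting of $\ad_{\widetilde\p}$-semisimple elements acting with rational eigenvalues (the values of roots on rational linear combinations of coroots), and the two commute since $\pi'$ vanishes on $\h^{\pi\setminus\pi'}$. Applying \cite[Prop. 2.4.5]{J2} will then yield the algebraicity of $\widetilde{\p_{\Lambda}}$ in $\g\mathfrak l\bigl(\widetilde\p\bigr)$.

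The main obstacle I anticipate is not conceptual but notational: one must check that each structural property established in Proposition \ref{propprop} for $\widetilde\p$ --- in particular the identification of $\m$ as the largest nilpotent ideal and the description of the radical --- transfers to the subalgebra $\widetilde{\p_{\Lambda}}$. This essentially amounts to replacing $\h^{\pi\setminus\pi'}$ by its subspace $\h_{\Lambda}$ throughout and re-running the same arguments; nothing genuinely new enters.
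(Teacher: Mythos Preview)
Your proposal is correct and follows essentially the same route as the paper: identify the radical of $\widetilde{\p_{\Lambda}}$ as $\h_{\Lambda}\oplus\m$, recognize $\m$ as its set of $\ad_{\widetilde\p}$-nilpotent elements, observe that $\mathfrak r_{\rm trunc}=\mathfrak r'\oplus\h_{\Lambda}$ is a rationally reductive complement in $\g\mathfrak l\bigl(\widetilde\p\bigr)$, and conclude by \cite[Prop.~2.4.5]{J2}. The paper additionally remarks that $\widetilde{\p_{\Lambda}}$ is an ideal of $\widetilde\p$ containing $\widetilde\p'$, but this is not needed for the algebraicity argument itself.
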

\begin{proof}
It is easily checked that  $\widetilde{\p_{\Lambda}}$ is an ideal of $\widetilde\p$, which contains $\widetilde\p'$. Moreover as in the proof of Prop. \ref{propprop} it is easily checked that the radical ${\rm rad}\,\widetilde{\p_{\Lambda}}$ of $\widetilde{\p_{\Lambda}}$ is ${\rm rad}\,\widetilde{\p_{\Lambda}}=\m\oplus\h_{\Lambda}$ and that $\m$ is the  largest nilpotent ideal of $\widetilde{\p_{\Lambda}}$. Thus $\m$ is the set of $\ad_{\widetilde{\p_{\Lambda}}}$-nilpotent elements and then the set of $\ad_{\widetilde\p}$-nilpotent elements in ${\rm rad}\,\widetilde{\p_{\Lambda}}$. Moreover $\m$ is complemented in $\widetilde{\p_{\Lambda}}$ by the rationally reductive Lie subalgebra $\mathfrak r_{\rm trunc}$ in $\g\mathfrak l\bigl(\widetilde\p\bigr)$ (as defined in the proof of  Cor. \ref{propcor1}).
 Then \cite[Prop. 2.4.5]{J2} implies that the Lie algebra $\widetilde{\p_{\Lambda}}$ is algebraic in $\g\mathfrak l\bigl(\widetilde\p\bigr)$. 
 \end{proof}
 
 \begin{Rq}\rm
 We may pay attention that the centre of $\widetilde{\p_{\Lambda}}$ need not be reduced to $\{0\}$ and then  $\widetilde{\p_{\Lambda}}$ cannot be viewed as a subalgebra of $\g\l(\widetilde{\p_{\Lambda}})$ in general. For instance take $\g$ to be simple of type ${\rm B}_3$ and consider  the parabolic subalgebra $\p$ associated with $\pi'=\pi\setminus\{\alpha_2\}$ (with Bourbaki's notation, \cite[Planche II]{BOU}). Then $\widetilde{\p_{\Lambda}}=\widetilde\p'$ (since $\h_{\Lambda}=\{0\}$ by Remark \ref{Rqtruncation}). Denote by $\beta=\alpha_1+2\alpha_2+2\alpha_3$ the highest root in $\g$ and $x_{\beta}$ a nonzero root vector of weight $\beta$. Then one checks that $x_{\beta}\in\mathfrak z\bigl(\widetilde\p'\bigr)\cap\m$ and that $x_{\beta}$ does not belong to the derived subalgebra $\bigl[\widetilde\p',\,\widetilde\p'\bigr]_{\widetilde\p'}$ of $\widetilde\p'$. Thus $\mathfrak z(\widetilde{\p_{\Lambda}})\neq\{0\}$ and by the above, $\m$ is the largest nilpotent ideal of $\widetilde{\p_{\Lambda}}$ (and also its radical) but $\m$ strictly contains the nilpotent radical ${\rm rad}\,\widetilde\p'\cap\bigl[\widetilde\p',\,\widetilde\p'\bigr]_{\widetilde\p'}$ of $\widetilde\p'=\widetilde{\p_{\Lambda}}$ in this case.
\end{Rq}

\begin{Rq}\rm \label{Rqtrunc}
Recall that $\widetilde\p'=\mathfrak r'\ltimes\m^a$ is the derived subalgebra of $\widetilde\p$. One may observe that $\widetilde\p'$ is also the In\"on\"u-Wigner contraction $\widetilde{\p'}$ of the derived subalgebra $\p'$ of $\p$ with respect to the decomposition $\p'=\mathfrak r'\oplus\m$, as defined in equation (\ref{deficontract}). 
\end{Rq}
Recall that $\widetilde\p_{\Lambda}$ denotes the canonical truncation of $\widetilde\p$. 
\begin{lm}\label{lmtrunc}
We have that
\begin{equation} \widetilde\p_{\Lambda}\subset\widetilde{\p_{\Lambda}}.\end{equation}

\end{lm}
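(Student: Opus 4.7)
The plan is to reduce the inclusion to a weight inclusion $\Lambda(\p)\subset\Lambda\bigl(\widetilde\p\bigr)$ in $(\h^{\pi\setminus\pi'})^*$, and then, for each $\lambda\in\Lambda(\p)$, to exhibit a nonzero $\widetilde\p$-semi-invariant of weight $\lambda$ obtained as the top $\m$-degree component of a $\p$-semi-invariant of the same weight.

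For the reduction, I use that $\widetilde\p'\subset\widetilde\p_{\Lambda}$ by (\ref{derinc}) and that $\widetilde\p=\widetilde\p'\oplus\h^{\pi\setminus\pi'}$ by Remark \ref{semiinvp}, so $\widetilde\p_{\Lambda}=\widetilde\p'\oplus\widetilde\h_{\Lambda}$ with $\widetilde\h_{\Lambda}:=\widetilde\p_{\Lambda}\cap\h^{\pi\setminus\pi'}$. Similarly, equation (\ref{contplamb}) rewrites as $\widetilde{\p_{\Lambda}}=\widetilde\p'\oplus\h_{\Lambda}$. Hence the claim reduces to $\widetilde\h_{\Lambda}\subset\h_{\Lambda}$, which, via the definition (\ref{deftrunc}) and the identification of weights with elements of $(\h^{\pi\setminus\pi'})^*$ supplied by Remark \ref{semiinvp}, follows from $\Lambda(\p)\subset\Lambda\bigl(\widetilde\p\bigr)$.

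Given $\lambda\in\Lambda(\p)$ and a nonzero $f\in S(\p)_\lambda$, I grade $S(\p)=S(\mathfrak r)\otimes S(\m)=\bigoplus_{k\geq 0}S_k$ by $\m$-degree and write $f=\sum_{k=0}^{d}f_k$ with $f_d\neq 0$. I claim that $f_d\in S\bigl(\widetilde\p\bigr)_\lambda$. For $z\in\mathfrak r$, (\ref{brackettildep}) gives $\ad_{\widetilde\p}z=\ad_{\p}z$, and this derivation preserves the $\m$-grading; extracting the $\m$-degree-$d$ component of $\ad_{\p}z(f)=\lambda(z)f$ yields $\ad_{\widetilde\p}z(f_d)=\lambda(z)f_d$. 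For $x\in\m$, the derivation $\ad_{\p}x$ sends $S_k$ into $S_k\oplus S_{k+1}$: the $S_{k+1}$ part (coming from $[x,\mathfrak r]\subset\m$ acting on $\mathfrak r$-factors) coincides with $\ad_{\widetilde\p}x$, while the $S_k$ part (coming from $[\m,\m]\subset\m$) is precisely what is killed by the contracted bracket in (\ref{brackettildep}). Since $\lambda$ vanishes on $\p'\supset\m$ by Remark \ref{semiinvp}, one has $\ad_{\p}x(f)=0$; its $S_{d+1}$-component then reads $\ad_{\widetilde\p}x(f_d)=0$. Thus $f_d$ is a nonzero $\widetilde\p$-semi-invariant of weight $\lambda$, so $\lambda\in\Lambda\bigl(\widetilde\p\bigr)$.

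The main technical obstacle is the $\m$-degree bookkeeping in the last paragraph, in particular isolating the $[\m,\m]$-contribution that vanishes in the contracted action. A conceptually cleaner reformulation uses the one-parameter family of brackets $[\cdot,\cdot]_t$ on $\p$ which agrees with $[\cdot,\cdot]$ on $\mathfrak r\times\mathfrak r$ and on $\mathfrak r\times\m$, and equals $t\,[\cdot,\cdot]$ on $\m\times\m$; here $t=1$ gives $\p$ and $t=0$ gives $\widetilde\p$. The rescaling $\phi_t=\mathrm{id}_{\mathfrak r}\oplus (t\cdot\mathrm{id}_{\m})$ is a Lie algebra isomorphism $(\p,[\cdot,\cdot]_t)\isomto(\p,[\cdot,\cdot])$ for $t\neq 0$, so $t^{d}\phi_t^{-1}(f)$ is a $[\cdot,\cdot]_t$-semi-invariant of weight $\lambda$; its $t\to 0$ limit is precisely $f_d$, now viewed as a $\widetilde\p$-semi-invariant, which reproves the construction above without explicit bigrading.
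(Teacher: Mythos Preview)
Your proof is correct and takes a genuinely different route from the paper. Both proofs share the same reduction step---writing $\widetilde\p_{\Lambda}=\widetilde\p'\oplus\widetilde\h_{\Lambda}$ and $\widetilde{\p_{\Lambda}}=\widetilde\p'\oplus\h_{\Lambda}$, so that the claim becomes $\widetilde\h_{\Lambda}\subset\h_{\Lambda}$, which follows from a suitable inclusion of weight sets. The divergence is in how that weight inclusion is obtained. The paper invokes results from \cite{F00} and \cite{FJ2} on an auxiliary polynomial algebra $\widetilde{C}_{\mathfrak r}^{U(\mathfrak r')}$ of invariant matrix coefficients to deduce the (weaker) inclusion $2\Lambda(\p)\subset\Lambda\bigl(\widetilde\p\bigr)$, which suffices since vanishing on $2\lambda$ forces vanishing on $\lambda$. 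You instead prove the stronger inclusion $\Lambda(\p)\subset\Lambda\bigl(\widetilde\p\bigr)$ by an elementary degeneration argument: taking the top $\m$-degree component $f_d$ of a $\p$-semi-invariant $f$ and checking directly, via the splitting $\ad_{\p}x|_{S_k}=\ad_{\widetilde\p}x|_{S_k}+(\text{the }[\m,\m]\text{-part})$ for $x\in\m$, that $f_d$ is a $\widetilde\p$-semi-invariant of the same weight. Your one-parameter reformulation with $\phi_t$ is the standard contraction picture and makes the argument transparent. The payoff of your approach is that it is self-contained, avoids the external machinery of matrix coefficients, and yields the sharp inclusion $\Lambda(\p)\subset\Lambda\bigl(\widetilde\p\bigr)$; the paper's route, on the other hand, leverages structure already developed in \cite{F00} and \cite{FJ2}, so it is natural in that context but less portable.
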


\begin{proof}
Recall the notation in subsection \ref{symsi},  notably the notation $\Lambda(M)$ for the set of weights of an $\h$-module $M$. Denote by $U(\a)$ the enveloping algebra of any finite-dimensional Lie algebra $\a$ and recall that the dual vector space $U(\a)^*$ of $U(\a)$ inherits a structure of associative (and commutative) algebra through the dual map of the coproduct in $U(\a)$. For some representation $\rho$ of $\a$, denote by $C(\rho)\subset U(\a)^*$ the vector space formed by matrix coefficients of $\rho$ (see for instance \cite[2.7.8]{D}). Finally  denote by $C(\a)$ the sum $\sum_{\rho}C(\rho)$ where the sum runs over the finite-dimensional representations $\rho$ of $\a$. By \cite[2.7.12]{D} $C(\a)$ is a subalgebra of $U(\a)^*$.
In \cite[Thm. 9.6.1]{F00} we have exhibited a polynomial algebra and also an $\h$-module $\widetilde{C}_{\mathfrak r}^{U(\mathfrak r')}\subset C\bigl(\widetilde\p^-\bigr)$ which is   formed by some invariant matrix coefficients under the coadjoint representation of  $U(\mathfrak r')$ (see \cite[2.3,\,2.4,\,2.5]{F00} for more details).
Then \cite[Thm. 9.6.1]{F00} implies that  $\Lambda\bigl(\widetilde\p\bigr)$ contains the set $\Lambda( \widetilde{C}_{\mathfrak r}^{U(\mathfrak r')})$. 
By \cite[Proof of Prop. 8.2.2]{F00} and \cite[7.1]{FJ2} the set $\Lambda( \widetilde{C}_{\mathfrak r}^{U(\mathfrak r')})$ is also the set of weights of the lower bound for $Sy(\p)$ constructed in \cite{FJ2}  and
by \cite[5.4.2, 7.1]{FJ2} we have more precisely that
$$\Lambda( \widetilde{C}_{\mathfrak r}^{U(\mathfrak r')})\subset\Lambda(\p)\subset\frac{1}{2}\Lambda( \widetilde{C}_{\mathfrak r}^{U(\mathfrak r')}).$$
Then in particular we have that
\begin{equation}2\Lambda(\p)\subset\Lambda\bigl(\widetilde\p\bigr).\label{incpoids}\end{equation}

Since $\widetilde\p'\subset\widetilde\p_{\Lambda}$ and since $\widetilde\p'=\mathfrak r'\ltimes\m^a$, there exists a vector subspace $\widetilde\h_{\Lambda}$ of $\h^{\pi\setminus\pi'}$ such that $\widetilde\p_{\Lambda}=\widetilde\p'\oplus\widetilde\h_{\Lambda}$.
Now take $h\in\widetilde\h_{\Lambda}$. Then by definition of the canonical truncation, $\lambda(h)=0$ for all $\lambda\in\Lambda\bigl(\widetilde\p\bigr)$. By the inclusion (\ref{incpoids}) we deduce that, for all $\lambda'\in\Lambda(\p)$, $\lambda'(h)=0$. Then $h\in\p_{\Lambda}\cap\h^{\pi\setminus\pi'}=\h_{\Lambda}$ by (\ref{truncp}).
We deduce that
\begin{equation*}\widetilde\p_{\Lambda}=\widetilde\p'\oplus\widetilde\h_{\Lambda}\subset\widetilde\p'\oplus\h_{\Lambda}=\widetilde{\p_{\Lambda}}.\end{equation*}
\end{proof}

The previous Lemma and Corollary \ref{propcor2} imply the following.
\begin{cor}\label{trunccor}
Let $\p$ be a standard parabolic subalgebra in a simple Lie algebra $\g$ and $\widetilde\p$ its In\"on\"u-Wigner contraction with respect  to the decomposition $\p=\mathfrak r\oplus\m$ with $\mathfrak r$ being the standard Levi factor and $\m$ the nilpotent radical of $\p$. If the canonical truncation $\p_{\Lambda}$ of $\p$ is equal to the derived subalgebra $\p'$ of $\p$, then the canonical truncation $\widetilde\p_{\Lambda}$ of $\widetilde\p$ verifies the equality
$$\widetilde\p_{\Lambda}=\widetilde\p'.$$ 
Moreover $$Sy\bigl(\widetilde\p\bigr)=Y\bigl(\widetilde\p'\bigr)=Sy\bigl(\widetilde\p'\bigr).$$
\end{cor}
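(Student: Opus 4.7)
The plan is to assemble the corollary essentially from the pieces already in place, with the hypothesis $\p_{\Lambda}=\p'$ serving only to make the containing Lie algebra $\widetilde{\p_{\Lambda}}$ collapse onto $\widetilde\p'$. First I would observe that by equation (\ref{truncp}) the hypothesis $\p_{\Lambda}=\p'$ forces $\h_{\Lambda}=\h^{\pi\setminus\pi'}\cap\p_{\Lambda}=\{0\}$, since the decomposition $\p=\p'\oplus\h^{\pi\setminus\pi'}$ is direct. Consequently $\mathfrak r_{\rm trunc}=\mathfrak r'\oplus\h_{\Lambda}=\mathfrak r'$, and equation (\ref{contplamb}) together with Remark \ref{Rqtrunc} gives
\[\widetilde{\p_{\Lambda}}=\mathfrak r_{\rm trunc}\ltimes\m^a=\mathfrak r'\ltimes\m^a=\widetilde\p'.\]

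Next I would sandwich $\widetilde\p_{\Lambda}$ between $\widetilde\p'$ and $\widetilde{\p_{\Lambda}}=\widetilde\p'$. The lower containment is the general inclusion (\ref{derinc}) applied to the algebraic Lie algebra $\widetilde\p$ (algebraicity being guaranteed by Corollary \ref{propcor1}), giving $\widetilde\p'\subset\widetilde\p_{\Lambda}$. The upper containment is precisely Lemma \ref{lmtrunc}: $\widetilde\p_{\Lambda}\subset\widetilde{\p_{\Lambda}}$. Combining these with the identification above yields
\[\widetilde\p'\subset\widetilde\p_{\Lambda}\subset\widetilde{\p_{\Lambda}}=\widetilde\p',\]
hence equality throughout, and in particular $\widetilde\p_{\Lambda}=\widetilde\p'$ as claimed.

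Finally, for the semi-invariant equalities, I would invoke Corollary \ref{propcor2}, which asserts $Sy\bigl(\widetilde\p\bigr)=Y\bigl(\widetilde\p_{\Lambda}\bigr)=Sy\bigl(\widetilde\p_{\Lambda}\bigr)$. Substituting the identification $\widetilde\p_{\Lambda}=\widetilde\p'$ just established gives the desired chain $Sy\bigl(\widetilde\p\bigr)=Y\bigl(\widetilde\p'\bigr)=Sy\bigl(\widetilde\p'\bigr)$.

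There is no real obstacle here: the proof is a one-line combination of Lemma \ref{lmtrunc}, the general inclusion $\widetilde\p'\subset\widetilde\p_{\Lambda}$, and Corollary \ref{propcor2}, with the hypothesis $\p_{\Lambda}=\p'$ used only to compute $\h_{\Lambda}=\{0\}$ and thus to identify $\widetilde{\p_{\Lambda}}$ with $\widetilde\p'$. The one thing worth being careful about is not to confuse the two notations $\widetilde\p_{\Lambda}$ (canonical truncation of the contraction) and $\widetilde{\p_{\Lambda}}$ (contraction of the canonical truncation), which is precisely what Lemma \ref{lmtrunc} relates.
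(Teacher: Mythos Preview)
Your proof is correct and follows essentially the same route as the paper's: the sandwich $\widetilde\p'\subset\widetilde\p_{\Lambda}\subset\widetilde{\p_{\Lambda}}=\widetilde\p'$ via Lemma \ref{lmtrunc} and Remark \ref{Rqtrunc}, followed by Corollary \ref{propcor2}. Your explicit computation that $\h_{\Lambda}=\{0\}$ is just a slightly more detailed unpacking of why $\widetilde{\p_{\Lambda}}=\widetilde{\p'}$, which the paper leaves implicit.
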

\begin{proof}

It is an immediate consequence of the above Lemma, since in this case we have 
$$\widetilde\p'\subset\widetilde\p_{\Lambda}\subset\widetilde{\p'}=\widetilde\p'$$
by Remark \ref{Rqtrunc}. Applying Corollary \ref{propcor2} completes the proof.\end{proof}

\begin{Rq}\label{truncrq}\rm
By Remark \ref{Rqtruncation} the condition that $\p_{\Lambda}=\p'$ of the above Corollary  holds in particular when $\p$ is a maximal parabolic subalgebra.\end{Rq}

\section{Adapted pairs and Weierstrass sections.}

Let $\a$ be a finite-dimensional Lie algebra.
We will recall in this section the definition of an adapted pair for $\a$ and of a Weierstrass section for $Y(\a)$.

\subsection{The index}\label{index}
The index of $\a$, denoted by ${\rm index}\,\a$, is the integer defined by
$${\rm index}\,\a=\min_{f\in\a^*}\dim\a^f.$$
where for $f\in\a^*$, $\a^f=\{x\in\a\mid\forall y\in\a,\,f([x,\,y])=0\}$. An element $f\in\a^*$ is said to be regular if $\dim a^f={\rm index}\,\a$.

If $\a$ is algebraic, denoting by ${\bf A}$ its adjoint group, then ${\rm index}\,\a$ is also given by
\begin{equation}{\rm index}\,\a=\min_{f\in\a^*}\codim {\bf A}.f\label{defind}\end{equation}
where ${\bf A}.f$ is the coadjoint orbit of $f$. Then $f\in\a^*$ is regular if and only if $\codim{\bf A}.f={\rm index}\,\a$ or if and only if $\dim{\bf A}.f$ is maximal.

For every subalgebra $S'$ of $S(\a)$, denote by ${\rm GKdim}\,S'$ the Gelfand-Kirillov dimension of $S'$ (see \cite[1.2]{BK}), which is also equal to the transcendence degree ${\rm degtr}_{\Bbbk}$ over $\Bbbk$ of the field of fractions ${\rm Frac}\,S'$ of $S'$, since $S'$ is a commutative algebra and a domain (see \cite[2.1]{BK}).

If $\a$ is algebraic, then a result of Chevalley-Dixmier (\cite[Lem. 7]{D0}) implies the equality
\begin{equation}{\rm index}\,\a={\rm degtr}_{\Bbbk}({\rm Frac}\,S(\a))^{\a}\label{Ro}\end{equation}
where $({\rm Frac}\,S(\a))^{\a}$ is the field of invariants under the induced adjoint action of $\a$ in the field of fractions ${\rm Frac}\,S(\a)$ of $S(\a)$.
The above equality is also known as a Rosenlicht theorem (\cite{Ro}).

Assume now that $\a$ is algebraic and recall that  its canonical truncation $\a_{\Lambda}$ is also algebraic. Then equations (\ref{semi}) and (\ref{Ro}) imply that
\begin{equation}{\rm index}\,\a_{\Lambda}={\rm degtr}_{\Bbbk}{\rm Frac}\,Y(\a_{\Lambda})={\rm GKdim}\,Y(\a_{\Lambda})={\rm GKdim}\,Sy(\a).\label{ind}\end{equation}
Indeed since  $Y(\a_{\Lambda})=Sy(\a_{\Lambda})$, we have that
$({\rm Frac}\,S(\a_{\Lambda}))^{\a_{\Lambda}} ={\rm Frac}\,Y(\a_{\Lambda})$ (see for instance \cite[Chap. I, Sec. B, 5.11]{F}). 

As a consequence of Lemma \ref{lmtrunc} we have the following Proposition.

\begin{prop}\label{propindex}
Let $\widetilde\p=\mathfrak r\ltimes \m^a$ be the In\"on\"u-Wigner contraction of $\p=\mathfrak r\oplus\m$ and $\widetilde\p_{\Lambda}$ be the canonical truncation of $\widetilde\p$. Let $\widetilde{\p_{\Lambda}}=\mathfrak r_{\rm trunc}\ltimes\m^a$ be the In\"on\"u-Wigner contraction of $\p_{\Lambda}=\mathfrak r_{\rm trunc}\oplus\m$. Then one has that
\begin{equation} {\rm index}\,\widetilde\p_{\Lambda}={\rm index}\,\widetilde{\p_{\Lambda}}.\label{inegindex}\end{equation}

\end{prop}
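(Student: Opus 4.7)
The strategy is to reformulate both sides of (\ref{inegindex}) as Gelfand-Kirillov dimensions of algebras of semi-invariants via equation (\ref{ind}), and then to identify those two semi-invariant algebras. Since $\widetilde\p$ is algebraic by Corollary \ref{propcor1}, the truncation $\widetilde\p_{\Lambda}$ is algebraic and equals its own canonical truncation; applying (\ref{ind}) to $\widetilde\p_{\Lambda}$ together with Corollary \ref{propcor2} gives
$${\rm index}\,\widetilde\p_{\Lambda}={\rm GKdim}\,Y\bigl(\widetilde\p_{\Lambda}\bigr)={\rm GKdim}\,Sy\bigl(\widetilde\p\bigr).$$
Since $\widetilde{\p_{\Lambda}}$ is algebraic by Lemma \ref{truncalgebraic}, applying (\ref{ind}) to it yields
$${\rm index}\,\widetilde{\p_{\Lambda}}={\rm GKdim}\,Sy\bigl(\widetilde{\p_{\Lambda}}\bigr).$$
Thus (\ref{inegindex}) is equivalent to the equality ${\rm GKdim}\,Sy\bigl(\widetilde\p\bigr)={\rm GKdim}\,Sy\bigl(\widetilde{\p_{\Lambda}}\bigr)$.

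I would prove this via the stronger claim that the canonical truncation of $\widetilde{\p_{\Lambda}}$ (viewed as an algebraic Lie algebra in its own right) equals $\widetilde\p_{\Lambda}$, namely $\bigl(\widetilde{\p_{\Lambda}}\bigr)_{\Lambda}=\widetilde\p_{\Lambda}$. Granting this, equation (\ref{semi}) applied to $\widetilde{\p_{\Lambda}}$ produces the identification $Sy\bigl(\widetilde{\p_{\Lambda}}\bigr)=Y\bigl(\widetilde\p_{\Lambda}\bigr)=Sy\bigl(\widetilde\p\bigr)$ as subalgebras of $S\bigl(\widetilde\p\bigr)$, so in particular their Gelfand-Kirillov dimensions coincide. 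The setup for this claim is supplied by Lemma \ref{lmtrunc}: the inclusion $\widetilde\p_{\Lambda}\subset\widetilde{\p_{\Lambda}}$ is automatically an inclusion of ideals inside $\widetilde{\p_{\Lambda}}$ (since $\widetilde\p_{\Lambda}$ is an ideal of the ambient $\widetilde\p$), and the quotient $\widetilde{\p_{\Lambda}}/\widetilde\p_{\Lambda}\cong\h_{\Lambda}/\widetilde\h_{\Lambda}$ is a toral subspace of $\h^{\pi\setminus\pi'}$ acting ad-semisimply on everything in sight.

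The main obstacle will then be to check that $\Lambda\bigl(\widetilde{\p_{\Lambda}}\bigr)$ and $\Lambda\bigl(\widetilde\p\bigr)$ cut out the same kernel inside $\widetilde{\p_{\Lambda}}$. In one direction, every $\widetilde\p$-semi-invariant already lies in $S\bigl(\widetilde\p_{\Lambda}\bigr)\subset S\bigl(\widetilde{\p_{\Lambda}}\bigr)$ by the very definition of the truncation, and remains a $\widetilde{\p_{\Lambda}}$-semi-invariant for its restricted character. In the other direction one decomposes any $\widetilde{\p_{\Lambda}}$-semi-invariant into $\h^{\pi\setminus\pi'}$-weight components under the ad-semisimple action of a complement of $\h_{\Lambda}$ in $\h^{\pi\setminus\pi'}$; each homogeneous component is automatically a $\widetilde\p$-semi-invariant whose restriction to $\widetilde{\p_{\Lambda}}$ recovers the starting character. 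Combining these two directions gives the equality of kernels, hence $\bigl(\widetilde{\p_{\Lambda}}\bigr)_{\Lambda}=\widetilde\p_{\Lambda}$, and the proposition follows.
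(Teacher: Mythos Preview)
There is a genuine gap. Your first displayed identity
\[
{\rm index}\,\widetilde{\p_{\Lambda}}={\rm GKdim}\,Sy\bigl(\widetilde{\p_{\Lambda}}\bigr)
\]
does not follow from equation (\ref{ind}). For an algebraic Lie algebra $\a$, equation (\ref{ind}) gives ${\rm index}\,\a_{\Lambda}={\rm GKdim}\,Sy(\a)$, not ${\rm index}\,\a$; by (\ref{indexn}) these differ exactly by $n(\a)$. Applied to $\a=\widetilde{\p_{\Lambda}}$ one only obtains ${\rm index}\,(\widetilde{\p_{\Lambda}})_{\Lambda}={\rm GKdim}\,Sy\bigl(\widetilde{\p_{\Lambda}}\bigr)$. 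Your ``stronger claim'' $(\widetilde{\p_{\Lambda}})_{\Lambda}=\widetilde\p_{\Lambda}$ then feeds back into this to give ${\rm index}\,\widetilde\p_{\Lambda}={\rm GKdim}\,Sy\bigl(\widetilde{\p_{\Lambda}}\bigr)={\rm GKdim}\,Sy\bigl(\widetilde\p\bigr)={\rm index}\,\widetilde\p_{\Lambda}$, which is a tautology and says nothing about ${\rm index}\,\widetilde{\p_{\Lambda}}$.

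What is actually needed (and what the paper proves first) is the \emph{opposite} identification $(\widetilde{\p_{\Lambda}})_{\Lambda}=\widetilde{\p_{\Lambda}}$, equivalently $\Lambda\bigl(\widetilde{\p_{\Lambda}}\bigr)=\{0\}$. The paper obtains this by transporting a putative nonzero weight through the identification $S\bigl(\widetilde{\p_{\Lambda}}\bigr)=S\bigl(\p_{\Lambda}\bigr)$ of $\h_{\Lambda}$-modules and invoking $Sy(\p_{\Lambda})=Y(\p_{\Lambda})$. Only once $n\bigl(\widetilde{\p_{\Lambda}}\bigr)=0$ is known does ${\rm index}\,\widetilde{\p_{\Lambda}}={\rm GKdim}\,Sy\bigl(\widetilde{\p_{\Lambda}}\bigr)$ hold; the paper then sandwiches $Sy\bigl(\widetilde\p\bigr)$ and $Sy\bigl(\widetilde{\p_{\Lambda}}\bigr)$ inside each other via the chain $\widetilde\p_{\Lambda}\subset\widetilde{\p_{\Lambda}}\subset\widetilde\p$ and the description of both semi-invariant algebras as $\widetilde\p'$-invariants, getting the two inequalities on GK-dimension. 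Your decomposition argument for $\widetilde{\p_{\Lambda}}$-semi-invariants into $\h^{\pi\setminus\pi'}$-weight pieces is correct and interesting, but it produces restrictions $\lambda|_{\h_{\Lambda}}$ with $\lambda\in\Lambda\bigl(\widetilde\p\bigr)$, and nothing you have written forces these to vanish on $\h_{\Lambda}$ (they vanish on the a priori smaller $\widetilde\h_{\Lambda}$); so it cannot by itself deliver $\Lambda\bigl(\widetilde{\p_{\Lambda}}\bigr)=\{0\}$.
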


\begin{proof}
Firstly we show that the set of weights $\Lambda\bigl(\widetilde{\p_{\Lambda}}\bigr)$  of $Sy\bigl(\widetilde{\p_{\Lambda}}\bigr)$ is reduced to $\{0\}$. Then  \begin{equation}(\widetilde{\p_{\Lambda}})_{\Lambda}=\widetilde{\p_{\Lambda}}\label{egSyy}\end{equation} and
\begin{equation}Sy\bigl(\widetilde{\p_{\Lambda}}\bigr)=Y\bigl(\widetilde{\p_{\Lambda}}\bigr).\label{Y}\end{equation}

Indeed let $\lambda\in\Lambda(\widetilde{\p_{\Lambda}})$. Since $\widetilde{\p_{\Lambda}}=\widetilde\p'\oplus\h_{\Lambda}$ as a vector space, we have (by a similar argument as in Remark \ref{semiinvp}) that \begin{equation}Sy\bigl(\widetilde{\p_{\Lambda}}\bigr)=S\bigl(\widetilde{\p_{\Lambda}}\bigr)^{\widetilde\p'}\label{egSyyy}\end{equation} and then one may view $\lambda$ as an element in $\h_{\Lambda}^*$.
There exists $s\in S\bigl(\widetilde{\p_{\Lambda}}\bigr)_{\lambda}\setminus\{0\}$. Since as vector spaces and also as $\h_{\Lambda}$-modules (see (\ref{brackettildep})) $\widetilde{\p_{\Lambda}}=\p_{\Lambda}$, one has that $S\bigl(\widetilde{\p_{\Lambda}}\bigr)=S\bigl(\p_{\Lambda}\bigr)$ as algebras and $\h_{\Lambda}$-modules. Then  one has $s\in S\bigl(\p_{\Lambda}\bigr)_{\lambda}\subset Sy\bigl(\p_{\Lambda}\bigr)=Y\bigl(\p_{\Lambda}\bigr)$. It follows that $\lambda=0$ and equations (\ref{egSyy}) and (\ref{Y}) are true.

Secondly one has that 
\begin{equation}Sy\bigl(\widetilde\p\bigr)=Sy\bigl(\widetilde\p_{\Lambda}\bigr)=S\bigl(\widetilde\p_{\Lambda}\bigr)^{\widetilde\p'}.\label{egSSy}\end{equation}

Indeed since $\widetilde\p'\subset\widetilde\p_{\Lambda}\subset \widetilde\p$ one has, by Remark \ref{semiinvp},  that
\begin{equation*}Sy\bigl(\widetilde\p\bigr)=S\bigl(\widetilde\p\bigr)^{\widetilde\p'}\supset S\bigl(\widetilde\p_{\Lambda}\bigr)^{\widetilde\p'}\supset S\bigl(\widetilde\p_{\Lambda}\bigr)^{\widetilde\p_{\Lambda}}=Sy\bigl(\widetilde\p\bigr)\label{}\end{equation*}
by equation (\ref{egSy}).

Equations (\ref{egSyyy}), (\ref{egSSy}) and the inclusion $\widetilde\p_{\Lambda}\subset\widetilde{\p_{\Lambda}}$ (by Lemma \ref{lmtrunc}) imply  that
\begin{equation}Sy\bigl(\widetilde\p\bigr)\subset S\bigl(\widetilde{\p_{\Lambda}}\bigr)^{\widetilde\p'}=Sy\bigl(\widetilde{\p_{\Lambda}}\bigr)=Y\bigl(\widetilde{\p_{\Lambda}}\bigr)\label{incc}\end{equation} by (\ref{Y}).

Then by \cite[3.1]{BK} and equation (\ref{ind}) one has that
$${\rm index}\,\widetilde\p_{\Lambda}={\rm GKdim}\,Sy\bigl(\widetilde\p\bigr)\le {\rm GKdim}\,Sy\bigl(\widetilde{\p_{\Lambda}}\bigr)={\rm index}\,(\widetilde{\p_{\Lambda}})_{\Lambda}.$$
Equation (\ref{egSyy}) gives the  inequality ${\rm index}\,\widetilde\p_{\Lambda}\le{\rm index}\,\widetilde{\p_{\Lambda}}$.

Moreover by (\ref {egSyyy}) one has that
$$Sy\bigl(\widetilde{\p_{\Lambda}}\bigr)=S\bigl(\widetilde{\p_{\Lambda}}\bigr)^{\widetilde\p'}\subset S\bigl(\widetilde\p\bigr)^{\widetilde\p'}=Sy\bigl(\widetilde\p\bigr)$$ since $\widetilde{\p_{\Lambda}}\subset\widetilde\p$ and by Remark \ref{semiinvp}.

Then by \cite[3.1]{BK} one has that
$${\rm GKdim}\,Sy\bigl(\widetilde{\p_{\Lambda}}\bigr)\le{\rm GKdim}\,Sy\bigl(\widetilde\p\bigr)$$
that is,
$${\rm index}\,\widetilde{\p_{\Lambda}}\le{\rm index}\,\widetilde\p_{\Lambda}.$$
This completes the proof. \end{proof}

For any algebraic Lie algebra $\a$, denote by $G(\a)=\mathbb Z\Lambda(\a)$ the additive group generated by the set of weights $\Lambda(\a)$ of $Sy(\a)$. By \cite[Appendice C]{FJ3} one knows that this group is a free abelian group of finite type. Denote by $n(\a)$ its rank. By \cite[Chap. I, Sec. B, 9.6]{F} one has that
\begin{equation}{\rm index}\,\a={\rm GKdim}\,Sy(\a)-n(\a).\label{indexn}\end{equation}
Moreover the set of weights $\Lambda(\p)$ of $Sy(\p)$, resp. $\Lambda\bigl(\widetilde\p\bigr)$ of $Sy\bigl(\widetilde\p\bigr)$, is included in $\sum_{\alpha\in\pi\setminus\pi'}\mathbb Z\varpi_{\alpha}$. Hence the rank $n(\p)$ of $G(\p)$, resp. $n\bigl(\widetilde\p\bigr)$ of $G\bigl(\widetilde\p\bigr)$, is equal to the dimension of the $\Bbbk$-vector space $\Bbbk\Lambda(\p)$, resp. $\Bbbk\Lambda\bigl(\widetilde\p\bigr)$, generated by $\Lambda(\p)$, resp. $\Lambda\bigl(\widetilde\p\bigr)$.
 One has the following.
\begin{prop}\label{egindex}
Assume that :
$${\rm index}\,\p_{\Lambda}={\rm index}\,\widetilde{\p_{\Lambda}}.$$
Then $\widetilde\p_{\Lambda}=\widetilde{\p_{\Lambda}}$. 

\end{prop}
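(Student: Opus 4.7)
The plan is to reduce the equality $\widetilde\p_{\Lambda}=\widetilde{\p_{\Lambda}}$ to an equality of ranks of weight groups, namely $n(\p)=n\bigl(\widetilde\p\bigr)$, and then to prove the latter by means of the index formulas (\ref{ind}) and (\ref{indexn}). Since $\widetilde\p'\subset\widetilde\p_{\Lambda}\subset\widetilde{\p_{\Lambda}}$ by (\ref{derinc}) and Lemma \ref{lmtrunc}, and since $\widetilde{\p_{\Lambda}}=\widetilde\p'\oplus\h_{\Lambda}$, one can write $\widetilde\p_{\Lambda}=\widetilde\p'\oplus\widetilde\h_{\Lambda}$ with $\widetilde\h_{\Lambda}\subset\h_{\Lambda}$ as in the proof of Lemma \ref{lmtrunc}, so that the conclusion is equivalent to $\widetilde\h_{\Lambda}=\h_{\Lambda}$. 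Since both $\Lambda(\p)$ and $\Lambda\bigl(\widetilde\p\bigr)$ are subsets of $(\h^{\pi\setminus\pi'})^*$ by Remark \ref{semiinvp}, the definition of the canonical truncation gives
\[\dim\h_{\Lambda}=\dim\h^{\pi\setminus\pi'}-n(\p),\qquad\dim\widetilde\h_{\Lambda}=\dim\h^{\pi\setminus\pi'}-n\bigl(\widetilde\p\bigr),\]
so the question reduces to $n(\p)=n\bigl(\widetilde\p\bigr)$.

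Applying (\ref{ind}) and (\ref{indexn}) to the algebraic Lie algebras $\p$ and $\widetilde\p$ I would write
\[n(\p)={\rm index}\,\p_{\Lambda}-{\rm index}\,\p,\qquad n\bigl(\widetilde\p\bigr)={\rm index}\,\widetilde\p_{\Lambda}-{\rm index}\,\widetilde\p.\]
The hypothesis combined with Proposition \ref{propindex} yields ${\rm index}\,\p_{\Lambda}={\rm index}\,\widetilde{\p_{\Lambda}}={\rm index}\,\widetilde\p_{\Lambda}$, so subtracting the two identities above gives
\[n\bigl(\widetilde\p\bigr)-n(\p)={\rm index}\,\p-{\rm index}\,\widetilde\p.\]
On the other hand, the inclusion (\ref{incpoids}) forces $\Bbbk\Lambda(\p)\subset\Bbbk\Lambda\bigl(\widetilde\p\bigr)$ and hence $n(\p)\le n\bigl(\widetilde\p\bigr)$, which translates into ${\rm index}\,\widetilde\p\le{\rm index}\,\p$.

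The main obstacle, and essentially the only nontrivial input needed, is the reverse inequality ${\rm index}\,\widetilde\p\ge{\rm index}\,\p$. This is the classical monotonicity of the index under a one-parameter contraction: $\widetilde\p$ arises as the $t\to 0$ degeneration of a family of Lie algebras all isomorphic to $\p$ (cf.\ the description of the $t$-commutator in \cite[Sec. 3]{Y0} and \cite[Sec. 4]{Y1}), and the dimension of the generic coadjoint orbit, cut out by the non-vanishing of appropriate minors of a matrix of structure constants, is lower semicontinuous along such a family, so the generic codimension (which is the index) can only increase in the limit. Once this inequality is in hand, it combines with the two displays above to force ${\rm index}\,\p={\rm index}\,\widetilde\p$, hence $n(\p)=n\bigl(\widetilde\p\bigr)$; by the dimension count of the first paragraph this yields $\widetilde\h_{\Lambda}=\h_{\Lambda}$ and therefore $\widetilde\p_{\Lambda}=\widetilde{\p_{\Lambda}}$, as required.
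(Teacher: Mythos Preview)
Your proof is correct and follows essentially the same approach as the paper's: both combine the index formulas (\ref{ind}) and (\ref{indexn}), Proposition \ref{propindex}, the weight inclusion (\ref{incpoids}), and the monotonicity of the index under contraction to conclude $n(\p)=n\bigl(\widetilde\p\bigr)$. The only cosmetic difference is that you finish via the dimension count $\dim\h_{\Lambda}=\dim\h^{\pi\setminus\pi'}-n(\p)$, whereas the paper deduces the equality $\Bbbk\Lambda(\p)=\Bbbk\Lambda\bigl(\widetilde\p\bigr)$ of weight spaces and then checks directly that any $h\in\h_{\Lambda}$ lies in $\widetilde\h_{\Lambda}$.
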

\begin{proof}
Since $\widetilde\p$ is algebraic (Cor. \ref{propcor1}), one has by (\ref{indexn}) that
$${\rm index}\,\widetilde\p={\rm GKdim}\,Sy\bigl(\widetilde\p\bigr)-n\bigl(\widetilde\p\bigr)={\rm index}\,\widetilde\p_{\Lambda}-n\bigl(\widetilde\p\bigr)$$ by (\ref{ind}).
Similarly one has that
$${\rm index}\,\p={\rm GKdim}\,Sy(\p)-n(\p)={\rm index}\,\p_{\Lambda}-n(\p).$$

Recall the inclusion (\ref{incpoids}).  This implies that $\Bbbk\Lambda(\p)\subset\Bbbk\Lambda\bigl(\widetilde\p\bigr)$ and then that $n(\p)\le n\bigl(\widetilde\p\bigr)$ by what we said above.

 Then the above Proposition and the
 hypothesis  imply that ${\rm index}\,\widetilde\p\le{\rm index}\,\p$.
Now the index cannot decrease under contraction (see for instance \cite[Sec. 4]{Y1}). Then our hypothesis implies that ${\rm index}\,\widetilde\p={\rm index}\,\p$ and that $n\bigl(\widetilde\p\bigr)=n(\p)$. Hence we have that \begin{equation}\Bbbk\Lambda(\p)=\Bbbk\Lambda\bigl(\widetilde\p\bigr).\label{egev}\end{equation}
Recall that 
$\widetilde\p_{\Lambda}=\widetilde\p'\oplus\widetilde\h_{\Lambda}\subset\widetilde{\p_{\Lambda}}=\widetilde\p'\oplus\h_{\Lambda}$ (Lemma \ref{lmtrunc}) where $\widetilde\h_{\Lambda}$ is a subspace of $\h_{\Lambda}=\p_{\Lambda}\cap\h^{\pi\setminus\pi'}$. We want to prove the inverse inclusion. 
Take $h\in\h_{\Lambda}$. 
For all $\lambda\in \Bbbk\Lambda(\p)$, we have that $\lambda(h)=0$ by definition of the canonical truncation (\ref{deftrunc}). It follows that $\lambda(h)=0$ for all $\lambda\in\Lambda\bigl(\widetilde\p\bigr)$ by equality (\ref{egev}) and then that $h$ vanishes on $\Lambda\bigl(\widetilde\p\bigr)$. This means that $h\in\widetilde\p_{\Lambda}\cap\h^{\pi\setminus\pi'}=\widetilde\h_{\Lambda}$, which gives the required inclusion.
\end{proof}

\subsection{Adapted pair}\label{AP}

Denote still by ad the coadjoint action of $\a$ on $\a^*$. By say \cite[6.1]{JS}, we have the following definition.
\begin{defi}\label{APdefi}\rm Let $\a$ be an algebraic Lie algebra.
An adapted pair $(h,\,y)$ for $\a$ is a pair formed by an ad-semisimple (as endomorphism of $\a^*$) element $h\in\a$ and a regular element $y\in\a^*$ such that $\ad h(y)=-y$. 
\end{defi}

\subsection{Weierstrass section}\label{WS}

We still consider an algebraic Lie algebra $\a$. Recall the algebra of symmetric invariants $Y(\a)\subset \Bbbk[\a^*]$. The definition below may be found in \cite{FJ4}. 

\begin{defi}\label{WSdefi}\rm
A Weierstrass section for $Y(\a)$ (or for $\a$, for short) is an affine subset $y+V$ of $\a^*$ (with $y\in\a^*$ and $V$ a vector subspace of $\a^*$) such that restriction of functions induces an algebra isomorphism between 
$Y(\a)$ and the algebra of polynomial functions $\Bbbk[y+V]$ on $y+V$.\end{defi}

Since the algebra $\Bbbk[y+V]$ is isomorphic to the  symmetric algebra $S(V^*)$, the existence of a Weierstrass section for $Y(\a)$ implies obviously the polynomiality of $Y(\a)$ but the inverse is not always true (see for instance \cite[Chap. VI, Sec. A, 2.10]{F}). The notion of a Weierstrass section was introduced by the Russian school and in particular by Popov (see \cite[2.2.1]{Po}) to linearize invariant generators in $S(X^*)^A$ in the case when a semisimple Lie algebra $\a$ (with adjoint group $A$) acts on a finite-dimensional vector space $X$. This definition coincides with definition in \ref{WSdefi} when $\a$ acts on $X=\a^*$ by coadjoint action (note that in \ref{WSdefi}, semisimplicity of $\a$ is not needed). Similarly in \cite{Po} an analogue of an adapted pair as defined in \ref{APdefi} was given. In particular when $\a$ is a simple Lie algebra, Popov showed \cite[2.2.10]{Po} that this analogue of an adapted pair exists whenever $X$ is a simple $\a$-module : this may fail if $X$ is not a simple $\a$-module, \cite[2.2.16, Example 3]{Po}. When such an adapted pair exists then he showed that it provides a Weierstrass section for $S(X^*)^A$ (see \cite[1.2]{FJ4} for more details).

A first example of a Weierstrass section is given by the socalled Kostant section (see \cite[2.2.2]{Po}).
More precisely take $\a=\g$ to be a semisimple Lie algebra. Then there exists a principal $\s\l_2$-triple $(x,\,h,\,y)$ for $\g$ with $h$ ad-semisimple and $x$ and $y$ regular in $\g^*\simeq\g$ such that $[h,\,y]=-y$. It follows that $(h,\,y)$ is an adapted pair for $\g$ and Kostant \cite{K} showed that $y+\g^x$ is a Weierstrass section for $Y(\g)$ : this Weierstrass section is called a Kostant section or a Kostant slice for $\g$. Observe also that we have that $\g\simeq\g^*=\ad\g(y)\oplus\g^x$.

More generally by \cite[6.3]{JS}, we have the following Theorem.

\begin{thm} Let $\a$ be an algebraic Lie algebra such that $Sy(\a)=Y(\a)$.
If $Y(\a)$ is a polynomial algebra and if $(h,\,y)$ is an adapted pair for $\a$, then $y+V$ is a Weierstrass section for $Y(\a)$, where $V$ is an $\ad h$-stable complement to $\ad\a(y)$ in $\a^*$.
\end{thm}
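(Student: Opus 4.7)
The plan is to prove that the restriction map $\rho : Y(\a) \to \Bbbk[y+V]$ is an algebra isomorphism. First I set up numerology: since $y$ is regular, $\dim V = \dim \a - \dim \ad\a(y) = \ind\,\a$, and the hypothesis $Sy(\a) = Y(\a)$ combined with equation (\ref{ind}) gives $N := \ind\,\a = \mathrm{GKdim}\,Y(\a)$. Thus $Y(\a) = \Bbbk[f_1,\ldots,f_N]$ for algebraically independent homogeneous generators $f_i$ of standard degree $d_i$; $\ad\a$-invariance implies $\ad h(f_i) = 0$, so each $f_i$ has $\ad h$-weight zero. The target $\Bbbk[y+V] \simeq S(V^*)$ is a polynomial algebra of Krull dimension $N$ as well.

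The key tool is the algebraic $\Bbbk^*$-action $T_t = t \cdot \exp(\ad h \cdot \ln t)$ on $\a^*$, made rigorous by the integrality of the $\ad h$-eigenvalues. The adapted pair condition $\ad h(y) = -y$ yields $T_t(y) = y$; since $\ad\a(y)$ and $V$ are both $\ad h$-stable, $T_t$ preserves the decomposition $\a^* = \ad\a(y) \oplus V$, hence $T_t(y+V) = y+V$, and on the $\ad h$-weight-$k$ subspace $V_k$ it acts by $t^{k+1}$. Because each $f_i$ has $\ad h$-weight zero and standard degree $d_i$, a short computation gives $f_i \circ T_t = t^{d_i} f_i$, so the map $\Phi := (f_1,\ldots,f_N) : y+V \to \Bbbk^N$ is $\Bbbk^*$-equivariant with $D_t = \operatorname{diag}(t^{d_i})$ on the target.

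Next I would compute the differential of $\Phi$ at the fixed point $y$. For each $i$, $\ad\a$-invariance forces $df_i(y) \in \a^y := (\ad\a(y))^{\perp}$, a subspace of $\a$ of dimension $N$. Dualizing $\a^* = \ad\a(y) \oplus V$ gives the decomposition $\a = \a^y \oplus V^{\perp}$, so the restriction of functionals $\a^y \to V^*$ is an isomorphism. Combined with the classical fact that at a regular point $y$ the differentials of a system of homogeneous generators of the polynomial algebra $Y(\a)$ form a basis of $\a^y$, this yields that $d_y\Phi : V \to \Bbbk^N$ is an isomorphism of vector spaces.

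To conclude I compare Hilbert series with respect to the $T_t$-grading. Injectivity of $\rho$ follows from the Jacobian criterion: since $d_y\Phi$ is an isomorphism, the $\rho(f_i)$ are algebraically independent in $\Bbbk[y+V]$, so $\rho$ is an injection of the polynomial algebra $Y(\a)=\Bbbk[f_1,\ldots,f_N]$. Moreover $d_y\Phi$ matches the multisets of $T_t$-weights $\{d_i\}$ on the source and $\{k+1 : V_k \neq 0\}$ (with multiplicity $\dim V_k$) on the target, so the Hilbert series $\prod_i (1-t^{d_i})^{-1}$ of $Y(\a)$ and $\prod_k (1-t^{k+1})^{-\dim V_k}$ of $\Bbbk[y+V]$ coincide; a graded injection of positively graded algebras with equal Hilbert series is an isomorphism. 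The main obstacle is the claim in paragraph three that the differentials of the invariant generators span $\a^y$: once this is in hand, everything else is formal.
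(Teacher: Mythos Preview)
The paper does not prove this statement: it is quoted directly from Joseph--Shafrir \cite[6.3]{JS} and used as a black box, the very next sentence noting that its polynomiality hypothesis is what limits its usefulness for the paper's purposes. So there is no argument in the paper to compare yours against.

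On the substance of your outline: the architecture is the standard one for Kostant-type slice theorems and is sound once the pieces are in place, but two points need attention. First, Definition~\ref{APdefi} asks only that $h$ be $\ad$-semisimple; the integrality of its eigenvalues, which you need in order to define an algebraic $\Bbbk^*$-action $T_t$, is not part of the data and must be established or circumvented (for instance by grading by the subgroup of $\Bbbk$ generated by the eigenvalues and by~$1$, which still supports a Hilbert-series comparison). Second --- and you are right to flag it --- the claim that $df_1(y),\ldots,df_N(y)$ span $\a^y$ is \emph{not} a general feature of regular elements: the strongly regular locus where this holds can be strictly smaller than the regular locus, even when $Y(\a)$ is polynomial. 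Showing that this particular $y$ is strongly regular genuinely requires all three hypotheses (polynomiality, $Sy(\a)=Y(\a)$, and the eigenvalue normalisation $\ad h(y)=-y$) working together, and is essentially the content of the theorem rather than a preliminary; calling it a ``classical fact'' hides exactly the step that has to be supplied. Note also that the Jacobian criterion is not needed for injectivity of $\rho$: that already follows from the density of $\mathbf A\cdot(y+V)$ in $\a^*$, which is immediate from $\ad\a(y)\oplus V=\a^*$. What then remains is precisely the matching of multisets $\{d_i\}=\{m_j+1\}$, and that is where the real argument lies.
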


The disadvantage of the previous Theorem is that polynomiality of $Sy(\a)=Y(\a)$ is required to apply it.

Another result in \cite[Lem. 6.11]{J6bis} will give the Propositions \ref{WScontractionprop} and \ref{WScontractionpropgen} below : this will be more useful in our present study of polynomiality of $Sy\bigl(\widetilde\p\bigr)=Y\bigl(\widetilde\p_{\Lambda}\bigr)=Sy\bigl(\widetilde\p_{\Lambda}\bigr)$. But before applying \cite[Lem. 6.11]{J6bis} in the case of $\widetilde\p_{\Lambda}$, we have to compute the Gelfand-Kirillov dimension of $Y\bigl(\widetilde\p_{\Lambda}\bigr)$, which is also equal to ${\rm index}\,\widetilde\p_{\Lambda}$ by (\ref{ind}). Under some hypotheses we will give below, we will show that ${\rm index}\,\widetilde\p_{\Lambda}$ is equal to ${\rm index}\,\p_{\Lambda}$, which we know how to compute it (see for instance \cite[Prop. 3.2]{FJ1}).

\subsection{Coadjoint action}\label{coact}

Through the Killing form $K$ of $\g$, we have that $\p^*={\widetilde\p}^*=\mathfrak r\oplus\m^-=\p^-$ as a vector space. 

Recall that the coadjoint action, which we still denote by $\ad$, of $\p$ on $\p^-$ is given by the following :

\begin{equation}\forall x\in\p,\,\forall y\in\p^-,\,\ad x(y)={\rm pr}_{\p^-}([x,\,y])\end{equation}
where ${\rm pr}_{\p^-}$ is the projection of $\g=\p^-\oplus\m$ onto $\p^-$.
Hence we have that :
\begin{equation}\forall x,\,x'\in\p,\forall y\in\p^-,\,K(y,\,[x,\,x'])=K(x,\,\ad x'(y)).\label{actioncoad}\end{equation}

By \cite[6.2]{F00}  the coadjoint action of $\widetilde\p$ on ${\widetilde\p}^*$, which we denote by $\ad^*$, is given by the following  :
\begin{align}\forall x\in\mathfrak r,\,\forall y\in\p^-, \ad^*x(y)=[x,\,y]\label{coad1}\\
\forall x\in\m,\,\forall y\in\p^-,\,\ad^*x(y)={\rm pr}_{\mathfrak r}([x,\,y])\label{coad2}\end{align}
where ${\rm pr}_{\mathfrak r}$ is the projection of $\g=\mathfrak r\oplus\m\oplus\m^-$ onto $\mathfrak r$.

By \cite[Lem. 9.3.1]{F00} we have that
\begin{equation} \forall x,\,x'\in\widetilde\p,\,\forall y\in\p^-,\,K(y,\,[x,\,x']_{\widetilde\p})=K(x,\,\ad^*x'(y)).\label{actioncoadcont}\end{equation}

\begin{Rqs}\rm

\label{rqs} 
(1) Observe that $\ad^*$ induces a coadjoint action of $\widetilde\p'$ on $\widetilde\p'^*=\p'^-=[{\p}^-,\,{\p}^- ]$ (as a vector space) similarly as in (\ref{coad1}) and (\ref{coad2}) with in (\ref{coad2}) ${\rm pr}_{\mathfrak r}$ replaced by ${\rm pr}_{\mathfrak r'}$ where ${\rm pr}_{\mathfrak r'}$ is the projection of $\g=\mathfrak r'\oplus\h^{\pi\setminus\pi'}\oplus\m\oplus\m^-$ onto $\mathfrak r'$. Of course we still have (\ref{actioncoadcont}), with $\widetilde\p$ replaced by $\widetilde\p'$ and $\p^-$ replaced by $\p'^-$.

(2) Recall the notation of subsection \ref{trunc}. Denote by $\g_{\mathbb Q}$ the $\mathbb Q$-vector space generated by the Chevalley basis of $\g$ formed by the $\alpha^{\vee}$, for all $\alpha\in\pi$, and the $x_{\gamma}$, for all $\gamma\in\Delta$, and set $\h_{\mathbb Q}^{\pi\setminus\pi'}=\g_{\mathbb Q}\cap\h^{\pi\setminus\pi'}$ and $(\h_{\Lambda})_{\mathbb Q}=\h_{\Lambda}\cap\g_{\mathbb Q}$. Since the restriction of $K$ to $\h^{\pi\setminus\pi'}\times\h^{\pi\setminus\pi'}$ is nondegenerate (see for instance \cite[5.2.2]{FJ3}),  it follows that the restriction of $K$ to $\h_{\mathbb Q}^{\pi\setminus\pi'}\times\h_{\mathbb Q}^{\pi\setminus\pi'}$ is positive definite. Hence the orthogonal  $\h'_{\mathbb Q}$ of $(\h_{\Lambda})_{\mathbb Q}$ in $\h_{\mathbb Q}^{\pi\setminus\pi'}$ with respect to the Killing form $K$  is such that $\h_{\mathbb Q}^{\pi\setminus\pi'}=(\h_{\Lambda})_{\mathbb Q}\oplus\h'_{\mathbb Q}$. Then set $\h'=\h'_{\mathbb Q}\otimes_{\mathbb Q}\Bbbk$, so that $\h'\oplus\h_{\Lambda}=\h^{\pi\setminus\pi'}$ and $K(\h_{\Lambda},\,\h')=0$. We obtain that $\ad^*$ induces a coadjoint action of $\widetilde{\p_{\Lambda}}$ on the vector space $\widetilde{\p_{\Lambda}}^*$ as in (\ref{coad1}) and (\ref{coad2}) with in (\ref{coad2}) ${\rm pr}_{\mathfrak r}$ replaced by ${\rm pr}_{\mathfrak r'\oplus\h_{\Lambda}}$ where ${\rm pr}_{\mathfrak r'\oplus\h_{\Lambda}}$ is the projection of $\g=\mathfrak r'\oplus\h_{\Lambda}\oplus\h^{\prime}\oplus\m\oplus\m^-$ onto $\mathfrak r'\oplus\h_{\Lambda}$.
\end{Rqs}

\subsection{The index of the canonical truncation of the contraction }\label{indexcontraction}


\begin{lm}\label{indexcontractionlm}
Let $\widetilde\p$ be the In\"on\"u-Wigner contraction of the standard parabolic subalgebra $\p$ with respect to the decomposition $\p=\mathfrak r\oplus\m$ with $\mathfrak r$ being the standard Levi factor and $\m$ being the nilpotent radical of $\p$. 
We assume that :
\begin{enumerate}
 \item[i)] There exist an element $y\in\widetilde\p'^*$ and  a vector subspace $V$ of $\widetilde\p'^*$ such that $$\ad^*\widetilde\p'(y)+ V=\widetilde\p'^*.$$
 \item[ii)]  $\dim V={\rm index}\,\p'$.
 \end{enumerate}
 Then $${\rm index}\,\p'={\rm index}\,\widetilde\p'$$ 
and the sum $\ad^*\widetilde\p'(y)+ V$ is direct.

\end{lm}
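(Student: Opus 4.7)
The plan is to combine two dimension inequalities: the standard lower bound on stabilizer dimensions coming from the definition of the index, together with the known fact that the index does not decrease under In\"on\"u--Wigner contraction.

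First I would observe that since $\widetilde{\p}'=\mathfrak r'\ltimes\m^a$ is itself the In\"on\"u--Wigner contraction of $\p'=\mathfrak r'\oplus\m$ (Remark \ref{Rqtrunc}), the contraction inequality gives
\begin{equation*}
{\rm index}\,\widetilde{\p}'\ge {\rm index}\,\p',
\end{equation*}
as noted in the proof of Prop. \ref{egindex} via \cite[Sec. 4]{Y1}.

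Next I would exploit hypothesis (i). From $\ad^*\widetilde{\p}'(y)+V=\widetilde{\p}'^*$ I get
\begin{equation*}
\dim\widetilde{\p}'=\dim\widetilde{\p}'^*\le \dim\ad^*\widetilde{\p}'(y)+\dim V.
\end{equation*}
Writing $(\widetilde{\p}')^y=\{x\in\widetilde{\p}'\mid \ad^*x(y)=0\}$ for the coadjoint stabilizer of $y$, one has $\dim\ad^*\widetilde{\p}'(y)=\dim\widetilde{\p}'-\dim(\widetilde{\p}')^y$, and by the very definition of the index, $\dim(\widetilde{\p}')^y\ge {\rm index}\,\widetilde{\p}'$. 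Substituting and simplifying yields
\begin{equation*}
\dim V\ge {\rm index}\,\widetilde{\p}'.
\end{equation*}
Combined with hypothesis (ii), $\dim V={\rm index}\,\p'$, this produces the reverse inequality ${\rm index}\,\p'\ge {\rm index}\,\widetilde{\p}'$. Together with the contraction inequality above, this forces the desired equality ${\rm index}\,\p'={\rm index}\,\widetilde{\p}'$.

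Finally, once equality of indices is established, all intermediate inequalities must be equalities. In particular $\dim\ad^*\widetilde{\p}'(y)=\dim\widetilde{\p}'-{\rm index}\,\widetilde{\p}'$ (so $y$ is regular for the coadjoint action of $\widetilde{\p}'$, as a bonus), and
\begin{equation*}
\dim\widetilde{\p}'^*=\dim\ad^*\widetilde{\p}'(y)+\dim V,
\end{equation*}
which combined with the surjective sum decomposition of (i) shows that $\ad^*\widetilde{\p}'(y)\cap V=\{0\}$, i.e. the sum is direct. There is no real obstacle here; the proof is essentially bookkeeping with dimensions once one invokes the non-decrease of the index under contraction.
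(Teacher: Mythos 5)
Your argument is correct and follows essentially the same two-step scheme as the paper's proof: cite the non-decrease of the index under In\"on\"u--Wigner contraction for one inequality, and derive the reverse from hypotheses (i)--(ii) by a dimension count that forces $\dim V\ge\operatorname{index}\widetilde\p'$. The only minor difference is that you use the definition $\operatorname{index}\a=\min_f\dim\a^f$ directly through the coadjoint stabilizer, whereas the paper invokes algebraicity of $\widetilde\p'$ to apply the orbit-codimension characterization~(\ref{defind}); both routes are immediate and equivalent via rank-nullity for $\ad^*(\cdot)(y)$.
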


\begin{proof}
Since the index  cannot decrease under contraction \cite[Sec. 4]{Y1} and by Remark (\ref{Rqtrunc}) we have already that ${\rm index}\,\widetilde\p'\ge{\rm index}\,\p'$.
Since $\widetilde\p'$ is  the derived subalgebra of $\widetilde\p\subset\g\l\bigl(\widetilde\p\bigr)$, then $\widetilde\p'$ is algebraic in $\g\l\bigl(\widetilde\p\bigr)$ by \cite[2.4.5]{J2}. Now the hypotheses $i)$ and $ii)$ and the definition of the index (\ref{defind}) imply that
$$\dim V={\rm index}\,\p'={\rm index}\,\widetilde\p'=\codim_{\widetilde\p'^*}\ad^*\widetilde\p'(y)$$ 
and that the sum $\ad^*\widetilde\p'(y)+ V$ is direct.
\end{proof}

\begin{cor}\label{indexcontractioncor}
Keep the same hypotheses as in the previous Lemma and assume further that $\p_{\Lambda}=\p'$, where $\p_{\Lambda}$ is the canonical truncation of $\p$.
Then \begin{equation}{\rm GKdim}\,Sy\bigl(\widetilde\p\bigr)={\rm GKdim}\,Sy(\p)={\rm index}\,\p'={\rm index}\,\widetilde\p'.\end{equation}
\end{cor}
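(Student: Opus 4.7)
The plan is to chain together three identifications: two using equation (\ref{ind}) applied separately to $\p$ and to $\widetilde\p$, and the middle equality ${\rm index}\,\p'={\rm index}\,\widetilde\p'$ already supplied by Lemma \ref{indexcontractionlm}. The additional hypothesis $\p_{\Lambda}=\p'$ is what glues these pieces together, via Corollary \ref{trunccor}.

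First I would treat the nondegenerate side. Since $\p$ is algebraic, equation (\ref{ind}) gives
$${\rm GKdim}\,Sy(\p)={\rm index}\,\p_{\Lambda},$$
and the hypothesis $\p_{\Lambda}=\p'$ rewrites the right-hand side as ${\rm index}\,\p'$. This yields the middle equality of the statement. Next I would treat the degenerate side. By Corollary \ref{trunccor}, the assumption $\p_{\Lambda}=\p'$ forces $\widetilde\p_{\Lambda}=\widetilde\p'$ and $Sy\bigl(\widetilde\p\bigr)=Y\bigl(\widetilde\p'\bigr)=Sy\bigl(\widetilde\p'\bigr)$. Applying equation (\ref{ind}) to the algebraic Lie algebra $\widetilde\p$ (which is algebraic by Corollary \ref{propcor1}) then produces
$${\rm GKdim}\,Sy\bigl(\widetilde\p\bigr)={\rm index}\,\widetilde\p_{\Lambda}={\rm index}\,\widetilde\p'.$$

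Finally, Lemma \ref{indexcontractionlm}, whose hypotheses $i)$ and $ii)$ are retained in the corollary, delivers the key identity ${\rm index}\,\p'={\rm index}\,\widetilde\p'$. Collecting the three steps gives the full chain of four equal quantities asserted in the corollary.

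There is no real obstacle here: every equality is a direct quotation of an already-established identity, and the genuine work was carried out in Lemma \ref{indexcontractionlm}. The only thing worth double-checking is that the hypothesis $\p_{\Lambda}=\p'$ indeed suffices to invoke Corollary \ref{trunccor}, which it does by inspection of that corollary's statement. The role of the present corollary is thus to repackage Lemma \ref{indexcontractionlm} as a statement about Gelfand--Kirillov dimensions of algebras of symmetric semi-invariants, in the form in which it will be used later to match the upper bound for the formal character of $Sy\bigl(\widetilde\p\bigr)$ against the lower bound obtained in \cite{F00}.
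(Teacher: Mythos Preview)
Your proposal is correct and follows essentially the same approach as the paper's own proof: apply equation (\ref{ind}) to $\p$ and to $\widetilde\p$ separately, use the hypothesis $\p_{\Lambda}=\p'$ together with Corollary \ref{trunccor} to identify the truncations with the derived subalgebras, and bridge the two sides with the equality ${\rm index}\,\p'={\rm index}\,\widetilde\p'$ from Lemma \ref{indexcontractionlm}.
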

\begin{proof}

Since $\p$ is algebraic, we have by (\ref{ind})  that $${\rm GKdim}\,Sy(\p)={\rm GKdim}\,Y(\p_{\Lambda})={\rm index}\,\p_{\Lambda}={\rm index}\,\p'$$
by hypothesis.
Similarly since $\widetilde\p$ is algebraic, we also have that $${\rm GKdim}\,Sy\bigl(\widetilde\p\bigr)={\rm GKdim}\,Y\bigl(\widetilde\p_{\Lambda}\bigr)={\rm index}\,\widetilde\p_{\Lambda}={\rm index}\,\widetilde\p'$$  
by Corollary \ref{trunccor}. Since ${\rm index}\,\p'={\rm index}\,\widetilde\p'$ by the previous Lemma, the required equality follows.
\end{proof}

\begin{Rq}\label{indexcontractionrq}\rm

If the hypotheses $i)$ and $ii)$ of the previous Lemma hold, then one has that
\begin{equation}\dim\m-\dim\mathfrak r'\le {\rm index}\,\p'.\end{equation}

Indeed by (\ref{coad2}) $\dim\ad^*\m(y)\le\dim\mathfrak r'$ and of course $\dim\ad^*\mathfrak r'(y)\le\dim\mathfrak r'$, hence
$\dim\ad^*\widetilde\p'(y)\le2\dim\mathfrak r'$.

\end{Rq}

\subsection{}

Lemma \ref{indexcontractionlm} can be generalized as follows. 

\begin{lm}\label{indexcontractionlmgen}

Assume that :
\begin{enumerate}
\item[i)] There exists $y\in\widetilde{\p_{\Lambda}}^*$ and a vector subspace $V$ of $\widetilde{\p_{\Lambda}}^*$ such that $$\ad^*\widetilde{\p_{\Lambda}}(y)+V=\widetilde{\p_{\Lambda}}^*$$ 
\item[ii)] $\dim V={\rm index}\,\p_{\Lambda}.$
\end{enumerate}
Then one has that $${\rm index}\,\widetilde{\p_{\Lambda}}={\rm index}\,\p_{\Lambda}$$
and the sum $\ad^*\widetilde{\p_{\Lambda}}(y)+V$ is direct.
\end{lm}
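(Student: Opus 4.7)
The plan is to mimic the proof of Lemma \ref{indexcontractionlm} almost verbatim, replacing $\widetilde\p'$ by $\widetilde{\p_{\Lambda}}$ and $\p'$ by $\p_{\Lambda}$. The two key ingredients are: first, that $\widetilde{\p_{\Lambda}}$ is algebraic (Lemma \ref{truncalgebraic}), so that its index admits the geometric description (\ref{defind}) in terms of codimensions of coadjoint orbits; and second, that the index cannot decrease under the In\"on\"u-Wigner contraction, applied to the contraction of $\p_{\Lambda}=\mathfrak r_{\rm trunc}\oplus\m$ into $\widetilde{\p_{\Lambda}}=\mathfrak r_{\rm trunc}\ltimes\m^a$ as defined in subsection \ref{trunc} (see \cite[Sec. 4]{Y1}). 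This immediately yields one of the two inequalities, namely
$$
{\rm index}\,\widetilde{\p_{\Lambda}}\ge {\rm index}\,\p_{\Lambda}.
$$

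For the reverse inequality, I would use hypothesis $i)$ to bound $\dim\ad^*\widetilde{\p_{\Lambda}}(y)$. Namely, since $\ad^*\widetilde{\p_{\Lambda}}(y)+V=\widetilde{\p_{\Lambda}}^*$, one has
$$
\dim\ad^*\widetilde{\p_{\Lambda}}(y)\ge \dim\widetilde{\p_{\Lambda}}^* -\dim V=\dim\widetilde{\p_{\Lambda}}^*-{\rm index}\,\p_{\Lambda}
$$
by hypothesis $ii)$. On the other hand, from the algebraicity of $\widetilde{\p_{\Lambda}}$ and (\ref{defind}) applied to $f=y\in\widetilde{\p_{\Lambda}}^*$, one gets
$$
\dim\ad^*\widetilde{\p_{\Lambda}}(y)\le \dim\widetilde{\p_{\Lambda}}^* -{\rm index}\,\widetilde{\p_{\Lambda}}.
$$
Combining these two inequalities forces ${\rm index}\,\widetilde{\p_{\Lambda}}\le{\rm index}\,\p_{\Lambda}$, hence equality.

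Once equality of indices is established, the directness of the sum follows from a dimension count: all the inequalities above become equalities, so
$$
\dim\ad^*\widetilde{\p_{\Lambda}}(y)=\dim\widetilde{\p_{\Lambda}}^*-{\rm index}\,\widetilde{\p_{\Lambda}}=\dim\widetilde{\p_{\Lambda}}^*-\dim V,
$$
and since $\ad^*\widetilde{\p_{\Lambda}}(y)+V$ already spans $\widetilde{\p_{\Lambda}}^*$ by hypothesis $i)$, the sum of these two subspaces must be direct.

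I don't expect any serious obstacle: the argument is a direct dimension-counting argument once the algebraicity of $\widetilde{\p_{\Lambda}}$ is invoked. The only mild subtlety is checking that the \textquotedblleft index cannot decrease under contraction\textquotedblright\ result indeed applies to the contraction of $\p_{\Lambda}$ considered here, rather than to the contraction of $\p$; but this is precisely the setup of subsection \ref{trunc}, where $\widetilde{\p_{\Lambda}}$ is defined as the In\"on\"u-Wigner contraction of $\p_{\Lambda}$ with respect to the decomposition $\p_{\Lambda}=\mathfrak r_{\rm trunc}\oplus\m$, so the general principle from \cite[Sec. 4]{Y1} applies verbatim.
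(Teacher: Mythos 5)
Your proof is correct and follows essentially the same route as the paper's: the non-decreasing of the index under contraction gives one inequality, while algebraicity of $\widetilde{\p_{\Lambda}}$ (Lemma \ref{truncalgebraic}) together with (\ref{defind}) and hypotheses $i)$, $ii)$ gives the reverse, and directness then follows by dimension count. The paper phrases the second half a touch more compactly via $\dim V\ge\codim_{\widetilde{\p_{\Lambda}}^*}\ad^*\widetilde{\p_{\Lambda}}(y)\ge{\rm index}\,\widetilde{\p_{\Lambda}}$, but the content is identical.
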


\begin{proof}

Since the index cannot decrease under contraction \cite[Sec. 4]{Y1} we have ${\rm index}\,\widetilde{\p_{\Lambda}}\ge{\rm index}\,\p_{\Lambda}$.
Now hypothesis $i)$ implies that
$$\dim V\ge\codim_{\widetilde{\p_{\Lambda}}^*}\ad^*\widetilde{\p_{\Lambda}}(y)\ge{\rm index}\,\widetilde{\p_{\Lambda}}$$
by (\ref{defind}) since $\widetilde{\p_{\Lambda}}$ is algebraic (by Lemma \ref{truncalgebraic}). Hypothesis $ii)$ implies then that ${\rm index}\,\widetilde{\p_{\Lambda}}={\rm index}\,\p_{\Lambda}$ and that the sum $\ad^*\widetilde{\p_{\Lambda}}(y)+V$ is direct.
\end{proof}

Similarly as in Remark \ref{indexcontractionrq} we have the following Remark.

\begin{Rq}\label{indexcontractionrqgen}\rm

If the hypotheses $i)$ and $ii)$ of the previous Lemma hold, then one has that
\begin{equation}\dim\m-\dim\mathfrak r'-\dim\h_{\Lambda}\le {\rm index}\,\p_{\Lambda}.\end{equation}
\end{Rq}

Similarly as in Corollary \ref{indexcontractioncor} we have the following.
\begin{cor}\label{indexcontractioncorgen}
Keep the same hypotheses as in the previous Lemma.
Then \begin{equation}{\rm GKdim}\,Sy\bigl(\widetilde\p\bigr)={\rm GKdim}\,Sy(\p)={\rm index}\,\p_{\Lambda}={\rm index}\,\widetilde\p_{\Lambda}.\end{equation}
\end{cor}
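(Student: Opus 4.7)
The plan is to chain together four equalities, each of which is already essentially available from earlier in the paper. The target identity has three equalities, so I will set up the chain
$$
{\rm GKdim}\,Sy\bigl(\widetilde\p\bigr)={\rm index}\,\widetilde\p_{\Lambda}={\rm index}\,\widetilde{\p_{\Lambda}}={\rm index}\,\p_{\Lambda}={\rm GKdim}\,Sy(\p),
$$
and verify each link separately.

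First I would handle the two outer equalities via (\ref{ind}). The Lie algebra $\p$ is algebraic, so applying (\ref{ind}) with $\a=\p$ yields ${\rm GKdim}\,Sy(\p)={\rm index}\,\p_{\Lambda}$. Similarly, by Corollary \ref{propcor1} the contraction $\widetilde\p$ is algebraic, so another application of (\ref{ind}), this time with $\a=\widetilde\p$, gives ${\rm GKdim}\,Sy\bigl(\widetilde\p\bigr)={\rm index}\,\widetilde\p_{\Lambda}$.

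Next I would fill in the middle of the chain. The equality ${\rm index}\,\widetilde\p_{\Lambda}={\rm index}\,\widetilde{\p_{\Lambda}}$ is exactly Proposition \ref{propindex}, which is unconditional and does not require the hypotheses $i)$ and $ii)$. The remaining link ${\rm index}\,\widetilde{\p_{\Lambda}}={\rm index}\,\p_{\Lambda}$ is precisely the conclusion of Lemma \ref{indexcontractionlmgen}, whose hypotheses we are assuming.

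Chaining the four identities produces the desired string of equalities. No step is delicate here: hypotheses $i)$ and $ii)$ have already done the real work inside Lemma \ref{indexcontractionlmgen}, and the only thing the corollary really adds is the translation from the index of the canonical truncations to Gelfand-Kirillov dimensions of the semi-invariant algebras, which is routine once one knows $\p$, $\widetilde\p$ and $\widetilde{\p_{\Lambda}}$ are algebraic (the last being Lemma \ref{truncalgebraic}, needed if one wanted to invoke (\ref{ind}) directly for $\widetilde{\p_{\Lambda}}$, although here Proposition \ref{propindex} already bridges this gap). Thus there is no genuine obstacle in the proof of this corollary beyond citing the right references in the right order.
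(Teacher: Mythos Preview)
Your proof is correct. The chain of equalities you set up is valid, and each link is justified by the reference you cite.

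There is a small but genuine difference in route from the paper. You bridge ${\rm index}\,\widetilde\p_{\Lambda}$ and ${\rm index}\,\widetilde{\p_{\Lambda}}$ by invoking Proposition~\ref{propindex}, which gives this index equality unconditionally. The paper instead first uses Lemma~\ref{indexcontractionlmgen} to obtain ${\rm index}\,\widetilde{\p_{\Lambda}}={\rm index}\,\p_{\Lambda}$, then feeds this into Proposition~\ref{egindex} to get the stronger conclusion $\widetilde\p_{\Lambda}=\widetilde{\p_{\Lambda}}$ as Lie algebras, from which the index equality is trivial. Your route is slightly more economical for the purpose of this corollary alone, since you only need the index equality and Proposition~\ref{propindex} hands it to you directly. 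The paper's route, on the other hand, extracts the additional information $\widetilde\p_{\Lambda}=\widetilde{\p_{\Lambda}}$ along the way, which is used later (e.g.\ in Proposition~\ref{WScontractionpropgen}) and so is worth establishing here.
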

\begin{proof}
Indeed the hypotheses of the previous Lemma
imply  by Proposition \ref{egindex} that $\widetilde\p_{\Lambda}=\widetilde{\p_{\Lambda}}$. Then it suffices to apply equation (\ref{ind}).
\end{proof}

\subsection{A Weierstrass section for  the contraction}\label{WScontraction}

Now we can give following Propositions which are a direct application of \cite[Lem. 6.11]{J6bis} and which will be very useful in our paper.
Recall $\Delta(\pi')=\Delta^+\sqcup\Delta_{\pi'}^-$ which is the subset of roots of $(\p,\,\h)$ and also of $(\widetilde\p,\,\h)$.
Recall also the formal character ${\rm ch}\,M$ of an $\h$-module $M=\bigoplus_{\nu\in\h^*}M_{\nu}$ having finite-dimensional weight subspaces $M_{\nu}$. This is given by the formula :
$${\rm ch}\,M=\sum_{\nu\in\h^*}\dim M_{\nu}\,e^{\nu}$$
where for $\nu,\,\nu'\in\h^*$, $e^{\nu+\nu'}=e^{\nu}e^{\nu'}$ so that, for two $\h$-modules $M$, $N$ having a decomposition with finite-dimensional weight subspaces,
one has that $${\rm ch}\,(M\otimes N)={\rm ch}\,M{\rm ch}\,N.$$
We write ${\rm ch}\,M\le{\rm ch}\,N$ whenever $\dim M_{\nu}\le\dim N_{\nu}$ for all $\nu\in\h^*$ (it is for example the case if $M$ is a submodule of $N$).

\begin{prop}\label{WScontractionprop}
We keep the notation and hypotheses i) and ii) of Lemma \ref{indexcontractionlm} and we assume further that :
\begin{enumerate}
\item[iii)] There exists a subset $S\subset\Delta(\pi')$   such that $S_{\mid\h_{\pi'}}$ is a basis for $\h_{\pi'}^*$ and that
$$y=\sum_{\gamma\in S} x_{-\gamma}.$$

\item[iv)] There exists a subset $T\subset\Delta(\pi')$   such that $S\cap T=\emptyset$ and
$$V=\sum_{\gamma\in T} \Bbbk x_{-\gamma}.$$ 
\end{enumerate}
Then $\lvert T\rvert={\rm index}\,\p'={\rm index}\,\widetilde\p'$ and since $S_{\mid\h_{\pi'}}$ is a basis for $\h_{\pi'}^*$ there exists a unique $h\in\h_{\pi'}$ such that $\ad^*h(y)=-y$. Thus $(h,\,y)$ is an adapted pair for $\widetilde\p'$.
Moreover  there exists, for each $\gamma\in T$, a unique $s(\gamma)\in\mathbb Q S$ such that $\gamma+s(\gamma)$ vanishes on $\h_{\pi'}$.
Assume furthermore  that :
\begin{enumerate}
\item[v)] For any $\gamma\in T$, $s(\gamma)\in\mathbb NS$ and that $\gamma+s(\gamma)\neq 0$. 
\item[vi)] $\p_{\Lambda}=\p'$.
\end{enumerate}
Then one has that :
\begin{equation}{\rm ch\,}\,Sy\bigl(\widetilde\p\bigr)={\rm ch}\,Y\bigl(\widetilde\p'\bigr)\le\prod_{\gamma\in T}(1-e^{\gamma+s(\gamma)})^{-1}.\label{eq}\end{equation}
If equality holds in the above inequality, then the restriction map gives an isomorphism
\begin{equation} Sy\bigl(\widetilde\p\bigr)=Y\bigl(\widetilde\p'\bigr)\stackrel{\sim}{\longrightarrow}\Bbbk[y+V].\end{equation}
Hence $y+V$ is a Weierstrass section for $Sy\bigl(\widetilde\p\bigr)$ and $Sy\bigl(\widetilde\p\bigr)$ is a polynomial $\Bbbk$-algebra. 

\end{prop}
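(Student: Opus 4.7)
The plan is to apply Joseph's adapted-pair machinery, \cite[Lem. 6.11]{J6bis}, to $\widetilde\p'$, after first verifying that $(h,y)$ is an adapted pair and identifying the relevant $\h$-weights. Combining hypotheses i) and ii) with Lemma \ref{indexcontractionlm} gives ${\rm index}\,\widetilde\p' = {\rm index}\,\p'$ and directness of the sum $\ad^*\widetilde\p'(y) + V = \widetilde\p'^*$; comparing dimensions with iv) then forces $\lvert T\rvert = \dim V = {\rm index}\,\widetilde\p'$, so $y$ is regular for the coadjoint action. For the $\ad$-semisimple element, since $S_{\mid\h_{\pi'}}$ is a basis of $\h_{\pi'}^*$ by iii), duality produces a unique $h \in \h_{\pi'}$ with $\gamma(h) = 1$ for every $\gamma \in S$; formula (\ref{coad1}) then gives $\ad^* h(y) = [h,y] = -\sum_{\gamma \in S}\gamma(h)\,x_{-\gamma} = -y$, confirming that $(h,y)$ is an adapted pair for $\widetilde\p'$.

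The existence and uniqueness of the shift $s(\gamma)$ are pure linear algebra from iii): writing $-\gamma_{\mid\h_{\pi'}}$ uniquely as $\sum_{\delta \in S} c_\delta\, \delta_{\mid\h_{\pi'}}$ with $c_\delta \in \mathbb Q$, one sets $s(\gamma) := \sum_{\delta \in S} c_\delta\, \delta \in \mathbb Q S$, and by construction $(\gamma + s(\gamma))_{\mid\h_{\pi'}} = 0$.

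Next I would invoke \cite[Lem. 6.11]{J6bis}. Hypothesis vi) together with Corollary \ref{trunccor} yields $\widetilde\p_\Lambda = \widetilde\p'$ and $Sy\bigl(\widetilde\p\bigr) = Y\bigl(\widetilde\p'\bigr) = Sy\bigl(\widetilde\p'\bigr)$, which places us squarely in Joseph's framework. Since each $x_{-\gamma}$ ($\gamma \in T$) is an $\ad h$-eigenvector, the complement $V$ is $\ad h$-stable, so all data required by the lemma is in hand. It embeds $Y\bigl(\widetilde\p'\bigr)$ as an $\h$-graded subalgebra of $\Bbbk[y+V] \simeq S(V^*)$ via restriction of functions. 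A homogeneous restriction-generator whose image is $x_{-\gamma}$ must have $\h_{\pi'}$-weight zero by $\mathfrak r'$-invariance, while the $\ad h$-grading combined with the equation $y = \sum_{\delta \in S} x_{-\delta}$ forces the missing $\h^{\pi \setminus \pi'}$-contribution to come from a combination of roots in $S$; one identifies this combination as precisely the shift $s(\gamma)$ constructed above, so that the generator has $\h$-weight $\gamma + s(\gamma)$. Hypothesis v) then guarantees $s(\gamma) \in \mathbb N S$ and $\gamma + s(\gamma) \neq 0$, so the resulting product over $\gamma \in T$ defines a legitimate formal character and yields the bound (\ref{eq}). If equality holds, the embedding is surjective on each weight space, hence an algebra isomorphism $Y\bigl(\widetilde\p'\bigr) \simeq \Bbbk[y+V] \simeq S(V^*)$, from which polynomiality and the Weierstrass section property are immediate.

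The main obstacle will be the identification of the $\h$-weight $\gamma + s(\gamma)$ for each restriction-generator, and the verification that the technical hypotheses of \cite[Lem. 6.11]{J6bis} are met in our context. This rests on the delicate interplay between the $\ad h$-grading (used to detect leading terms of invariants) and the $\mathfrak r'$-invariance (used to lift them back into $Y\bigl(\widetilde\p'\bigr)$); it is exactly hypothesis v) that makes the integrality and non-vanishing of the shifts work out. The remaining steps are largely bookkeeping once this identification is in place.
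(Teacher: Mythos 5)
Your proposal follows the paper's strategy: verify the adapted pair, set up the shifts $s(\gamma)$, and then run Joseph's argument from \cite[Lem.~6.11]{J6bis} to get the character bound and, under equality, the Weierstrass section. The identification of $h$, the linear-algebra construction of $s(\gamma)$, and the use of hypothesis vi) with Corollary~\ref{trunccor} to reduce to $Y\bigl(\widetilde\p'\bigr)$ all match the paper's proof.

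There is, however, a real imprecision at the heart of your middle paragraph. You claim that restriction of functions ``embeds $Y\bigl(\widetilde\p'\bigr)$ as an $\h$-graded subalgebra of $\Bbbk[y+V]$.'' This is not true as stated: $y+V$ is an affine translate, not a linear subspace, and restriction to it is not $\ad\h$-equivariant (evaluation at $y$ sends each $x_{-\gamma}$, $\gamma\in S$, to $1$, which destroys the grading). The paper handles this by factoring the restriction map $\varphi$ as $\varphi_{00}\circ\varphi_{0}$, where $\varphi_{0}\colon Y\bigl(\widetilde\p'\bigr)\to\Bbbk[\Bbbk y\oplus V]$ (restriction to the \emph{linear} subspace $\Bbbk y\oplus V$, killing all $x_{\alpha}$ for $\alpha\notin\pm(S\cup T)$) \emph{is} an $\ad\h$-module morphism, while $\varphi_{00}$ is the non-equivariant evaluation at $y$. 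The weight analysis you describe only makes sense for $\varphi_{0}$, not for $\varphi$. Moreover, your derivation of the bound tacitly presupposes the existence of ``restriction-generators'' whose image is a single $x_{-\gamma}$, i.e.\ you implicitly assume surjectivity of $\varphi$ --- but that is precisely what is \emph{not} known before equality holds. The paper instead proves, using $\ad\h_{\pi'}$-invariance and hypothesis v), that $\varphi_{0}\bigl(Y\bigl(\widetilde\p'\bigr)\bigr)$ lands in the polynomial algebra generated by the monomials $a_{\gamma}=x_{\gamma}\prod_{\alpha\in S}x_{\alpha}^{q_{\alpha,\gamma}}$ of weight $\gamma+s(\gamma)$; injectivity of $\varphi_{0}$ then gives the character inequality without any surjectivity assumption, and only afterwards is equality shown to force surjectivity of $\varphi$. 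Filling in these two points --- the factorization through $\varphi_{0}$ and the containment in $\Bbbk[a_{\gamma},\,\gamma\in T]$ --- would make your argument complete.
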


\begin{proof}
It is inspired by \cite[Lem. 6.11]{J6bis}. We will give the proof for the reader's convenience. 
Firstly we know by Lemma \ref{indexcontractionlm} that $\dim V={\rm index}\,\widetilde\p'$. 

Hypotheses $i)$ and $ii)$ imply that restriction of functions $Y\bigl(\widetilde\p'\bigr)\xhookrightarrow{\varphi} \Bbbk[y+V]$ is an injective algebras morphism. Like in \cite[Lem. 6.11]{J6bis} one may view $\varphi$ as the composition of two maps. The first one $\varphi_0: Y\bigl(\widetilde\p'\bigr)\longrightarrow\Bbbk[\Bbbk y\oplus V]$ is the restriction map on $\Bbbk y\oplus V$ which consists in sending all root vectors $x_{\alpha}$ for $\alpha\not\in S\cup T$ to zero. The second map $\varphi_{00}:\Bbbk[\Bbbk y\oplus V]\longrightarrow\Bbbk[y+ V]\simeq S(V^*)$ is the evaluation map at $y+V$, which consists in sending every root vector $x_{\alpha}$ with $\alpha\in S$ to one. The first map $\varphi_0$ is a morphism of $\ad\h$-modules, unlike the second one. As $\varphi=\varphi_{00}\circ \varphi_0$ is injective, we deduce that $\varphi_0$ is also injective. Since $\p_{\Lambda}=\p'$ we have that $\widetilde \p_{\Lambda}=\widetilde\p'$ by Corollary \ref{trunccor}. Then $Sy\bigl(\widetilde\p\bigr)=Y\bigl(\widetilde\p'\bigr)$ and the index of $\widetilde\p'$ (which is equal to $\dim V=\lvert T\rvert$) is equal to the Gelfand-Kirillov dimension of $Y\bigl(\widetilde\p'\bigr)$ by Lemma \ref{indexcontractionlm} and Corollary \ref{indexcontractioncor}.

For any $\xi\in\h^*$, denote by $\xi'$ its restriction  to $\h_{\pi'}$. Recall (2) of Remarks \ref{rqs} and in particular the notation $\g_{\mathbb Q}$. For a subspace $\a$ of $\g$, set $\a_{\mathbb Q}=\g_{\mathbb Q}\cap\a$. Then $\h^*_{\mathbb Q}=\h^*\cap\g_{\mathbb Q}^*$ is the $\mathbb Q$-vector space generated by the set $\pi$ of simple roots of $\g$ and $\Delta\subset\h^*_{\mathbb Q}$.
Since $S,\,T\subset\Delta$ and since $S_{\mid\h_{\pi'}}$ is a basis for $\h_{\pi'}^*$ (and then also a basis for ${(\h_{\pi'})_{\mathbb Q}}^*$)  it is easily seen that, for each $\gamma\in T$, there exists a unique $s(\gamma)\in\mathbb QS$ such that $s(\gamma)'=-\gamma'$ that is, such that $\gamma+s(\gamma)$ vanishes on $\h_{\pi'}$.
Writing $\gamma'=-\sum_{\alpha\in S}q_{\alpha,\gamma}\alpha'$, with $q_{\alpha,\,\gamma}\in\mathbb Q$, we have that $s(\gamma)=\sum_{\alpha\in S}q_{\alpha,\gamma}\alpha$.

Then every weight vector $f$ in $Y\bigl(\widetilde\p'\bigr)$ whose image by $\varphi$ has a monomial of the form $\prod_{\gamma\in T}x_{\gamma}^{n_{\gamma}}$ (and then whose image by $\varphi_0$ has a monomial of the form $\prod_{\alpha\in S}x_{\alpha}^{m_{\alpha}}\prod_{\gamma\in T}x_{\gamma}^{n_{\gamma}}$) must have weight
$$\sum_{\gamma\in T} n_{\gamma}\bigl(s(\gamma)+\gamma\bigr).\eqno (*)$$ Indeed $\sum_{\alpha\in S}m_{\alpha}\alpha'+\sum_{\gamma\in T}n_{\gamma}\gamma'=0$ since $f$ is $\ad\,\h_{\pi'}$-invariant and 
using that $(\alpha')_{\alpha\in S}$ is a basis for $\h_{\pi'}^*$ gives  that, for all $\alpha\in S$, $$m_{\alpha}=\sum_{\gamma\in T}n_{\gamma}q_{\alpha,\,\gamma}\eqno (**)$$ from which equality $(*)$ above follows.

Now suppose that, for every $\gamma\in T$, $s(\gamma)\in\mathbb NS$ that is, with the above notation that $q_{\alpha,\,\gamma}\in\mathbb N$ for any $\alpha\in S$. For every $\gamma\in T$, set $$a_{\gamma}=x_{\gamma}\prod_{\alpha\in S}x_{\alpha}^{q_{\alpha,\,\gamma}}\in S\bigl(\widetilde\p'\bigr).$$
Then it is easily seen that the vectors $a_{\gamma}$, $\gamma\in T$, are algebraically independent. Moreover consider $f\in Y\bigl(\widetilde\p'\bigr)$ a weight vector with $\varphi_0(f)$ having $\prod_{\alpha\in S}x_{\alpha}^{m_{\alpha}}\prod_{\gamma\in T}x_{\gamma}^{n_{\gamma}}$ as a monomial. 

One verifies easily, using equality $(**)$ that  $$\prod_{\alpha\in S}x_{\alpha}^{m_{\alpha}}\prod_{\gamma\in T}x_{\gamma}^{n_{\gamma}}=\prod_{\gamma\in T}a_{\gamma}^{n_{\gamma}}.$$ It follows that 
$$\varphi_0\bigl(Y\bigl(\widetilde\p'\bigr)\bigr)\subset \Bbbk[a_{\gamma},\,\gamma\in T].\eqno (***)$$

Finally suppose that, for each $\gamma\in T$, $\gamma+s(\gamma)\neq 0$, namely that the weight of each $a_{\gamma}$ is nonzero. Then every weight  subspace of the polynomial algebra $\Bbbk[a_{\gamma},\,\gamma\in T]$ is finite-dimensional and  the formal character of this polynomial algebra is well defined : it is equal to 
$$\prod_{\gamma\in T}(1-e^{\gamma+s(\gamma)})^{-1}.$$
Using the injectivity of $\varphi_0$ and $(***)$ completes the proof for the inequality (\ref{eq}). Finally if equality holds in the inequality (\ref{eq}) then we have equality in $(***)$ and then $\varphi\bigl(Y\bigl(\widetilde\p'\bigr)\bigr)=S(V^*)$ which gives the surjectivity of the map $\varphi$.
\end{proof}

The above Proposition may be useful in case when  the parabolic subalgebra $\p$ is maximal (by Corollary \ref{trunccor} and Remark \ref{truncrq}). However when $\widetilde\p'\subsetneq\widetilde\p_{\Lambda}$, we need
a generalization of Proposition \ref{WScontractionprop}. This is  the following Proposition.

\begin{prop}\label{WScontractionpropgen}

We keep the notation and hypotheses i) and ii) of Lemma \ref{indexcontractionlmgen} and we assume further that :
\begin{enumerate}
\item[iii)] There exists a subset $S\subset\Delta(\pi')$   such that $S_{\mid\h_{\pi'}\oplus\h_{\Lambda}}$ is a basis for $(\h_{\pi'}\oplus\h_{\Lambda})^*$ and that
$$y=\sum_{\gamma\in S} x_{-\gamma}.$$

\item[iv)] There exists a subset $T\subset\Delta(\pi')$   such that $S\cap T=\emptyset$ and
$$V=\sum_{\gamma\in T} \Bbbk x_{-\gamma}.$$ 
\end{enumerate}
Then $\lvert T\rvert={\rm index}\,\p_{\Lambda}={\rm index}\,\widetilde\p_{\Lambda}$ and since $S_{\mid\h_{\pi'}\oplus\h_{\Lambda}}$ is a basis for $(\h_{\pi'}\oplus\h_{\Lambda})^*$ there exists a unique $h\in\h_{\pi'}\oplus\h_{\Lambda}$ such that $\ad^*h(y)=-y$. Thus $(h,\,y)$ is an adapted pair for $\widetilde\p_{\Lambda}$.
Moreover  there exists, for each $\gamma\in T$, a unique $s(\gamma)\in\mathbb Q S$ such that $\gamma+s(\gamma)$ vanishes on $\h_{\pi'}\oplus\h_{\Lambda}$.

Assume furthermore  that :
\begin{enumerate}
\item[v)] For any $\gamma\in T$, $s(\gamma)\in\mathbb NS$ and that $\gamma+s(\gamma)\neq 0$. \end{enumerate}
 Then one has that :
\begin{equation}{\rm ch}\,Sy\bigl(\widetilde\p\bigr)={\rm ch}\,Y\bigl(\widetilde\p_{\Lambda}\bigr)\le\prod_{\gamma\in T}\bigl(1-e^{\gamma+s(\gamma)}\bigr)^{-1}.\label{eqgen}\end{equation}
If equality holds in the above inequality, then the restriction map gives an isomorphism
\begin{equation} Sy\bigl(\widetilde\p\bigr)=Y\bigl(\widetilde\p_{\Lambda}\bigr)\stackrel{\sim}{\longrightarrow}\Bbbk[y+V].\end{equation}
Hence $y+V$ is a Weierstrass section for $Sy\bigl(\widetilde\p\bigr)$ and $Sy\bigl(\widetilde\p\bigr)$ is a polynomial $\Bbbk$-algebra. 
\end{prop}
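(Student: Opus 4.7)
The plan is to mirror the proof of Proposition \ref{WScontractionprop}, systematically replacing $\widetilde\p'$ by $\widetilde\p_\Lambda$, replacing $\h_{\pi'}$ by $\h_{\pi'}\oplus\h_\Lambda$, and invoking the more general index lemma (Lemma \ref{indexcontractionlmgen}) and its corollary (Corollary \ref{indexcontractioncorgen}) in place of their restricted counterparts. First I would combine Lemma \ref{indexcontractionlmgen} with Proposition \ref{egindex} to conclude that $\widetilde\p_\Lambda=\widetilde{\p_\Lambda}$ and that the sum $\ad^*\widetilde{\p_\Lambda}(y)+V$ is direct with $|T|=\dim V={\rm index}\,\widetilde\p_\Lambda={\rm index}\,\p_\Lambda={\rm GKdim}\,Sy\bigl(\widetilde\p\bigr)$, using Corollary \ref{indexcontractioncorgen}. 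Second, since $S_{\mid\h_{\pi'}\oplus\h_\Lambda}$ is a basis of the dual, there is a unique $h\in\h_{\pi'}\oplus\h_\Lambda$ with $\alpha(h)=1$ for every $\alpha\in S$. Because each $x_{-\alpha}$ appearing in $y$ has $\h$-weight $-\alpha$, the action of $\ad^* h$ via (\ref{coad1}) gives $\ad^* h(y)=-y$, producing the adapted pair $(h,y)$ for $\widetilde\p_\Lambda$.

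Next I would factor the restriction map $\varphi:Y\bigl(\widetilde\p_\Lambda\bigr)\to\Bbbk[y+V]$ exactly as in the previous proposition as $\varphi=\varphi_{00}\circ\varphi_0$, where $\varphi_0:Y\bigl(\widetilde\p_\Lambda\bigr)\to\Bbbk[\Bbbk y\oplus V]$ sends every $x_{-\alpha}$ with $\alpha\notin S\cup T$ to zero and $\varphi_{00}:\Bbbk[\Bbbk y\oplus V]\to\Bbbk[y+V]\simeq S(V^*)$ specializes each $x_{-\alpha}$ with $\alpha\in S$ to $1$. Injectivity of $\varphi$ follows from hypotheses i) and ii): since the directness of $\ad^*\widetilde\p_\Lambda(y)+V=\widetilde\p_\Lambda^*$ forces the coadjoint orbit of $y$ to meet $y+V$ transversally, any invariant vanishing on $y+V$ must vanish identically (this is the standard Jantzen argument underlying \cite[Lem. 6.11]{J6bis}). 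The map $\varphi_0$ respects the $\ad\h$-module structure (unlike $\varphi_{00}$), so injectivity of $\varphi$ forces injectivity of $\varphi_0$.

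For the weight analysis, the crucial observation is that since $S_{\mid\h_{\pi'}\oplus\h_\Lambda}$ is a basis, for each $\gamma\in T$ there are unique rational numbers $q_{\alpha,\gamma}$ with $\gamma'=-\sum_{\alpha\in S}q_{\alpha,\gamma}\alpha'$ where now $\xi'$ denotes the restriction of $\xi$ to $\h_{\pi'}\oplus\h_\Lambda$, giving $s(\gamma)=\sum_{\alpha\in S}q_{\alpha,\gamma}\alpha\in\mathbb Q S$ with $\gamma+s(\gamma)$ vanishing on $\h_{\pi'}\oplus\h_\Lambda$. Any weight vector $f\in Y\bigl(\widetilde\p_\Lambda\bigr)$ is $\ad(\h_{\pi'}\oplus\h_\Lambda)$-invariant (by Remark \ref{semiinvp} applied to $\widetilde\p_\Lambda$), so if $\varphi_0(f)$ contains a monomial $\prod_{\alpha\in S}x_{-\alpha}^{m_\alpha}\prod_{\gamma\in T}x_{-\gamma}^{n_\gamma}$ then the exponents satisfy $m_\alpha=\sum_{\gamma\in T}n_\gamma q_{\alpha,\gamma}$ and the weight of $f$ (as an $\ad\h'$-eigenvector) equals $\sum_{\gamma\in T}n_\gamma(\gamma+s(\gamma))$. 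Under hypothesis v), setting $a_\gamma=x_{-\gamma}\prod_{\alpha\in S}x_{-\alpha}^{q_{\alpha,\gamma}}$ gives algebraically independent elements, and the identity $\prod_{\alpha\in S}x_{-\alpha}^{m_\alpha}\prod_{\gamma\in T}x_{-\gamma}^{n_\gamma}=\prod_{\gamma\in T}a_\gamma^{n_\gamma}$ forces $\varphi_0\bigl(Y\bigl(\widetilde\p_\Lambda\bigr)\bigr)\subset\Bbbk[a_\gamma,\,\gamma\in T]$; since each $\gamma+s(\gamma)\neq 0$, the formal character of the latter polynomial algebra is well defined and equal to $\prod_{\gamma\in T}(1-e^{\gamma+s(\gamma)})^{-1}$, whence the inequality (\ref{eqgen}) follows by injectivity of $\varphi_0$.

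The main obstacle I anticipate is the first half of the weight argument: one must verify that the enlarged subspace $\h_{\pi'}\oplus\h_\Lambda$ still produces a well-posed decomposition of weights (i.e.\ that weights of $Y\bigl(\widetilde\p_\Lambda\bigr)$ are genuinely killed on $\h_\Lambda$), which requires using Corollary \ref{propcor2} together with the explicit description $\widetilde\p_\Lambda=\widetilde\p'\oplus\widetilde\h_\Lambda$ from the proof of Lemma \ref{lmtrunc}, and carefully distinguishing the role of $\h'$ (on which nontrivial weights live) from $\h_\Lambda$ (on which they vanish), as set up in (2) of Remarks \ref{rqs}. The final implication, that equality in (\ref{eqgen}) forces $\varphi_0\bigl(Y\bigl(\widetilde\p_\Lambda\bigr)\bigr)=\Bbbk[a_\gamma,\,\gamma\in T]=S(V^*)$ via $\varphi_{00}$, is then a direct character-comparison argument yielding the Weierstrass section and the polynomiality of $Sy\bigl(\widetilde\p\bigr)$.
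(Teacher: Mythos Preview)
Your proposal is correct and follows essentially the same approach as the paper: invoke Lemma \ref{indexcontractionlmgen} together with Proposition \ref{egindex} to obtain $\widetilde\p_\Lambda=\widetilde{\p_\Lambda}$ and $\dim V={\rm index}\,\widetilde\p_\Lambda$, then reproduce the argument of Proposition \ref{WScontractionprop} with $\h_{\pi'}$ replaced by $\h_{\pi'}\oplus\h_\Lambda$ throughout. One small notational slip: the monomials in $\varphi_0(f)$ live in $S\bigl(\widetilde\p_\Lambda\bigr)$, so they should be written in the root vectors $x_\alpha$ for $\alpha\in S\cup T\subset\Delta(\pi')$ rather than $x_{-\alpha}$; and the ``obstacle'' you flag is not really one, since once $\widetilde\p_\Lambda=\widetilde{\p_\Lambda}$ is established, $\h_{\pi'}\oplus\h_\Lambda\subset\widetilde\p_\Lambda$ and invariants under $\widetilde\p_\Lambda$ are automatically killed by $\h_\Lambda$.
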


\begin{proof}

It is still inspired by \cite[Lem. 6.11]{J6bis}. Here we have that $\dim V={\rm index}\,\widetilde\p_{\Lambda}$ by Lemma \ref{indexcontractionlmgen} and by Proposition \ref{egindex}.
Then the rest of the proof is quite similar to  the proof of Proposition \ref{WScontractionprop}.
\end{proof}

\begin{Rq}\label{WScontractionRq}\rm
The above Proposition is obviously still valid when $\widetilde\p_{\Lambda}=\widetilde\p'$ (with $\h_{\Lambda}=\{0\}$).
Like in  \cite[Remark 6.11]{J6bis} one may observe, when equality holds  in (\ref{eqgen}), that there exists a set of homogeneous algebraically independent generators of  $Sy\bigl(\widetilde\p\bigr)=Y\bigl(\widetilde\p_{\Lambda}\bigr)$ formed by weight vectors for which we can give their weight and degree. Indeed in this case for all $\gamma\in T$, we have that $s(\gamma)\in\mathbb N S$ and setting $\lvert s(\gamma)\rvert=\sum_{\alpha\in S}q_{\alpha,\,\gamma}$ if $s(\gamma)=\sum_{\alpha\in S}q_{\alpha,\,\gamma}\alpha$ ($q_{\alpha,\,\gamma}\in\mathbb N$),
then $1+\lvert s(\gamma)\rvert$ is the degree of an homogeneous generator of weight $\gamma+s(\gamma)$ (since the map $\varphi_0$ introduced in the proof of Proposition \ref{WScontractionprop} preserves the degree).

\end{Rq}

\section{Construction of an adapted pair.}\label{Adap}

In order to apply  Proposition \ref{WScontractionpropgen}, we have to construct an element $y\in\widetilde{\p_{\Lambda}}^*$ and a vector subspace $V\subset\widetilde{\p_{\Lambda}}^*$ satisfying hypotheses $i)$, $ii)$, $iii)$, $iv)$ and $v)$ of Lemma \ref{indexcontractionlmgen} and Proposition 
\ref{WScontractionpropgen}, so that we obtain an adapted pair $(h,\,y)$ for $\widetilde{\p_{\Lambda}}=\widetilde\p_{\Lambda}$. For this purpose we will use an analogue of \cite[Lem. 3.2]{FL1}, which we will give below.

We need to recall the notion of a Heisenberg set (see for instance \cite[Sec. 3]{FL1}).
\subsection{Heisenberg sets.}

 \begin{defi}\rm
 
 Let $\Gamma\subset\Delta$. We say that $\Gamma$ is a Heisenberg set of centre $\gamma\in\Gamma$ if for all $\alpha\in\Gamma\setminus\{\gamma\}$, there exists $\alpha'\in\Gamma\setminus\{\gamma\}$, which is unique, such that $\alpha+\alpha'=\gamma$. We will denote such an $\alpha'$ by $\theta(\alpha)$. Observe that one always has that $\theta(\alpha)\neq\alpha$. A Heisenberg set $\Gamma$ with centre $\gamma$ will be denoted by $\Gamma_{\gamma}$ to emphasize that $\gamma$ is its centre. For a Heisenberg set $\Gamma_{\gamma}$ of centre $\gamma$, we will set $\Gamma_{\gamma}^0=\Gamma_{\gamma}\setminus\{\gamma\}$.
 \end{defi}
 
 Assume that there exists a set $S\subset\Delta^+\sqcup\Delta^-_{\pi'}=\Delta(\pi')$ such that all the Heisenberg sets $\Gamma_{\gamma}$, for $\gamma\in S$, are disjoint. Then set $O=\bigsqcup_{\gamma\in S}\Gamma_{\gamma}^0$. We may observe that $\theta:O\longrightarrow O$ defined above is an involution.
 
 Heisenberg sets were very useful to construct adapted pairs in the nondegenerate case, for maximal parabolic subalgebras (see \cite{FL} and \cite{FL1}) or in type A (see \cite{J5}). Here in the degenerate case, we will see that they continue to play an important role. For any subset $A\subset\Delta(\pi')$, set $\g_{A}=\bigoplus_{\alpha\in A}\g_{\alpha}$ which is a vector subspace of $\widetilde{\p_{\Lambda}}$ and $\g_{-A}=\bigoplus_{\alpha\in A}\g_{-\alpha}$ which is a vector subspace of $\widetilde{\p_{\Lambda}}^*\simeq\p^-_{\Lambda}$.
 
 \subsection{An adapted pair and a Weierstrass section.}
 
 \begin{lm}\label{Adaplm}
 Assume that there exist disjoint subsets of $\Delta(\pi')$ : $S,\,T, \Gamma_{\gamma},\,\gamma\in S,$ where for all $\gamma\in S$, $\Gamma_{\gamma}$ is a Heisenberg set with centre $\gamma$. Set $O=\bigsqcup_{\gamma\in S}\Gamma_{\gamma}^0$ and
$$y=\sum_{\gamma\in S}x_{-\gamma}\in\widetilde{\p_{\Lambda}}^*.$$ 
Denote by $\widetilde\Phi_y$ the skew-symmetric bilinear form on $\widetilde{\p_{\Lambda}}\times\widetilde{\p_{\Lambda}}$ such that
 $$\widetilde\Phi_y(x,\,x')=K(y,\,[x,\,x']_{\widetilde\p})$$ for all $x,\,x'\in\widetilde{\p_{\Lambda}}$. Assume further that :
  \begin{enumerate}
 \item[i)] $S_{\mid\h_{\pi'}\oplus\h_{\Lambda}}$ is a basis for $(\h_{\pi'}\oplus\h_{\Lambda})^*$. 
 \item[ii)] $\Delta(\pi')=\bigsqcup_{\gamma\in S}\Gamma_{\gamma}\sqcup T.$
 \item[iii)]  $\lvert T\rvert={\rm index}\,\p_{\Lambda}$.
 \item[iv)] The restriction of $\widetilde\Phi_y$ to $\g_O\times\g_O$ is nondegenerate. 
\end{enumerate}
Then  one has that $\widetilde\p_{\Lambda}=\widetilde{\p_{\Lambda}}$ and
$$\ad^*\widetilde\p_{\Lambda}(y)\oplus\g_{-T}=\widetilde\p_{\Lambda}^*.$$
In particular $y$ is regular in $\widetilde\p_{\Lambda}^*$ and if we denote by $h\in\h_{\pi'}\oplus\h_{\Lambda}$ the unique element such that $\gamma(h)=1$ for all $\gamma\in S$, then $(h,\,y)$ is an adapted pair for $\widetilde\p_{\Lambda}$. Moreover  for all $\gamma\in T$, denote by $s(\gamma)$ the unique element in $\mathbb Q S$ such that $\gamma+s(\gamma)$ vanishes on $\h_{\pi'}\oplus\h_{\Lambda}$. Then if $\gamma+s(\gamma)\neq 0$ and if $s(\gamma)\in\mathbb NS$, for all $\gamma\in T$,  one has that :
$${\rm ch}\,Sy\bigl(\widetilde\p\bigr)={\rm ch}\,Y\bigl(\widetilde\p_{\Lambda}\bigr)\le\prod_{\gamma\in T}\bigl(1-e^{\gamma+s(\gamma)}\bigr)^{-1}.$$

Finally if equality holds in the above inequality, then restriction of functions gives the algebra isomorphism
$$Sy\bigl(\widetilde\p\bigr)=Y\bigl(\widetilde\p_{\Lambda}\bigr)\stackrel{\sim}{\longrightarrow}\Bbbk[y+\g_{-T}].$$
Hence $y+\g_{-T}$ is a Weierstrass section for $Sy\bigl(\widetilde\p\bigr)$ and $Sy\bigl(\widetilde\p\bigr)$ is a polynomial $\Bbbk$-algebra. 
 \end{lm}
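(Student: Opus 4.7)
The plan is to verify hypotheses $i)$ and $ii)$ of Lemma \ref{indexcontractionlmgen} with $V=\g_{-T}$, and then invoke Lemma \ref{indexcontractionlmgen}, Proposition \ref{egindex}, and Proposition \ref{WScontractionpropgen} in sequence to obtain all of the conclusions.

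Via the Killing form, I would first identify $\widetilde{\p_{\Lambda}}^*=\mathfrak r'\oplus\h_{\Lambda}\oplus\m^-$ (this is a $K$-complement to $\widetilde{\p_{\Lambda}}^{\perp_K}=\h'\oplus\m^-$, using the orthogonality relations of Remarks \ref{rqs}), so that as an $\h$-module
\[
\widetilde{\p_{\Lambda}}^*=(\h_{\pi'}\oplus\h_{\Lambda})\oplus\g_{-S}\oplus\g_{-O}\oplus\g_{-T}
\]
by hypothesis $ii)$. The crux is to prove $\ad^*\widetilde{\p_{\Lambda}}(y)+\g_{-T}=\widetilde{\p_{\Lambda}}^*$; by $K$-duality this is equivalent to showing that any $z\in\widetilde{\p_{\Lambda}}$ annihilating both $\ad^*\widetilde{\p_{\Lambda}}(y)$ and $\g_{-T}$ vanishes. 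The second annihilation forces $z\in(\h_{\pi'}\oplus\h_{\Lambda})\oplus\g_S\oplus\g_O$, so I write $z=h_z+z_S+z_O$. The first, via equation (\ref{actioncoadcont}), reads $\widetilde\Phi_y(z,\cdot)\equiv 0$ on $\widetilde{\p_{\Lambda}}$. Testing against $h'\in\h_{\pi'}\oplus\h_{\Lambda}$ and writing $z_S=\sum_{\gamma\in S}c_\gamma x_\gamma$, I compute
\[
\widetilde\Phi_y(z,h')=-\sum_{\gamma\in S}\gamma(h')\,c_\gamma\,K(x_{-\gamma},x_\gamma),
\]
so hypothesis $i)$ forces $z_S=0$. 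Testing against $x_\beta$ for $\beta\in O$, the contribution of $h_z$ vanishes because $K(y,x_\beta)=0$ (here $S\cap O=\emptyset$), so only $\widetilde\Phi_y(z_O,x_\beta)$ survives, and hypothesis $iv)$ yields $z_O=0$. Finally, testing against $x_\mu$ for $\mu\in S$ leaves $\mu(h_z)K(x_{-\mu},x_\mu)=0$ for all $\mu\in S$, so hypothesis $i)$ gives $h_z=0$.

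Combined with hypothesis $iii)$ that $\dim V=|T|={\rm index}\,\p_{\Lambda}$, the hypotheses of Lemma \ref{indexcontractionlmgen} are satisfied, yielding ${\rm index}\,\widetilde{\p_{\Lambda}}={\rm index}\,\p_{\Lambda}$ together with the directness of the sum. Proposition \ref{egindex} then gives $\widetilde\p_{\Lambda}=\widetilde{\p_{\Lambda}}$. The directness shows that $\ad^*\widetilde\p_{\Lambda}(y)$ has codimension $|T|={\rm index}\,\widetilde\p_{\Lambda}$ in $\widetilde\p_{\Lambda}^*$, so $y$ is regular. Hypothesis $i)$ furnishes the unique $h\in\h_{\pi'}\oplus\h_{\Lambda}$ with $\gamma(h)=1$ for all $\gamma\in S$, and then $\ad^*h(y)=[h,y]=-\sum_{\gamma\in S}\gamma(h)x_{-\gamma}=-y$, so $(h,y)$ is an adapted pair for $\widetilde\p_{\Lambda}$. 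The remaining statements---the formal character inequality under hypothesis $v)$ and the Weierstrass section under equality---follow verbatim from Proposition \ref{WScontractionpropgen} applied with the same $S$, $T$ and $V=\g_{-T}$.

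The main obstacle is the orthogonality computation above. What makes it go through is precisely that $S\cap O=\emptyset$ decouples the test against $\g_O$ from the Cartan piece $h_z$ (killing $K(y,x_\beta)$ for $\beta\in O$), so that nondegeneracy solely on $\g_O\times\g_O$---hypothesis $iv)$---suffices; any nontrivial overlap between $S$ and $O$ would require a stronger nondegeneracy assumption on a larger subspace.
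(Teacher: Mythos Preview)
Your proof is correct and follows essentially the same approach as the paper: both reduce to verifying the hypotheses of Lemma \ref{indexcontractionlmgen} and then invoking Proposition \ref{WScontractionpropgen}, and both establish $\ad^*\widetilde{\p_{\Lambda}}(y)+\g_{-T}=\widetilde{\p_{\Lambda}}^*$ using conditions $i)$, $ii)$, $iv)$ together with $S\cap O=\emptyset$ and the Killing form identity (\ref{actioncoadcont}). The only difference is stylistic: the paper shows directly that each piece $\g_{-O}$, $\g_{-S}$, $\h_{\pi'}\oplus\h_{\Lambda}$ of the decomposition lies in the sum, whereas you argue dually by showing that the $K$-annihilator of the sum is zero --- the same computations viewed from the other side. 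One small slip: you wrote $\widetilde{\p_{\Lambda}}^{\perp_K}=\h'\oplus\m^-$, but since $K$ pairs $\m$ with $\m^-$ the orthogonal is $\h'\oplus\m$; this does not affect your (correct) identification $\widetilde{\p_{\Lambda}}^*=\mathfrak r'\oplus\h_{\Lambda}\oplus\m^-$ nor the rest of the argument.
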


\begin{proof}
The proof is similar as in \cite[Lem. 3.2]{FL1}, itself inspired by \cite[8.6]{J5}. We give it for completeness. Condition $iii)$ implies that, as a vector space, $\widetilde{\p_{\Lambda}}^*=\h_{\pi'}\oplus\h_{\Lambda}\oplus\g_{-O}\oplus\g_{-S}\oplus\g_{-T}$ and that 
$\widetilde{\p_{\Lambda}}=\h_{\pi'}\oplus\h_{\Lambda}\oplus\g_{O}\oplus\g_{S}\oplus\g_{T}$.
Condition $iv)$ and equation (\ref{actioncoadcont})  imply that 
\begin{equation}\g_{-O}\subset \ad^*\g_O(y)+\g_{-S}+\g_{-T}\label{gO}\end{equation} since moreover $O\cap S=\emptyset$.
Condition $i)$ implies that $$\g_{-S}=\ad^*(\h_{\pi'}\oplus\h_{\Lambda})(y)$$ and also that
the restriction of $\widetilde\Phi_y$ to $\g_S\times(\h_{\pi'}\oplus\h_{\Lambda})$ is nondegenerate. Hence one has that
$$\h_{\pi'}\oplus\h_{\Lambda}\subset\ad^*\g_S(y)+\g_{-S}+\g_{-O}+\g_{-T}.$$
It follows that
$$\widetilde{\p_{\Lambda}}^*=\h_{\pi'}\oplus\h_{\Lambda}\oplus\g_{-O}\oplus\g_{-S}\oplus\g_{-T}\subset\ad^*\widetilde{\p_{\Lambda}}(y)+\g_{-T}.$$
Lemma \ref{indexcontractionlmgen} and Proposition \ref{WScontractionpropgen} complete the proof.
\end{proof}

Actually we will construct in the following sections, for some particular parabolic subalgebras, sets $S$, $T$ and Heisenberg sets $\Gamma_{\gamma}$, for $\gamma\in S$, satisfying the hypotheses of Lemma \ref{Adaplm}.
The hypothesis $iv)$ will be the more delicate point to verify. That is why we need a Lemma of nondegeneracy as in the following subsection \ref{Nondegen}. But firstly we need to introduce the notion of stationary roots.

\subsection{Sequences constructed from a root.}\label{Str}

In this subsection, we assume that there exist disjoint subsets $S,\,T$ of $\Delta(\pi')=\Delta^+\sqcup\Delta^-_{\pi'}$ and for every $\gamma\in S$, there exists a Heisenberg set $\Gamma_{\gamma}\subset\Delta(\pi')$ with centre $\gamma$ such that all the Heisenberg sets together with $T$ are disjoint. As in the previous subsections, we set $O=\bigsqcup_{\gamma\in S}\Gamma_{\gamma}^0$,
$y=\sum_{\gamma\in S}x_{-\gamma}\in\widetilde{\p_{\Lambda}}^*$
and we denote by $\widetilde\Phi_y$ the skew-symmetric bilinear form on $\widetilde{\p_{\Lambda}}\times\widetilde{\p_{\Lambda}}$ defined by $\widetilde\Phi_y(x,\,x')=K(y,\,[x,\,x']_{\widetilde\p})$.
Suppose that assumptions $i)$, $ii)$, $iii)$ of Lemma \ref{Adaplm} hold. We want to give a sufficient condition for $y$ to satisfy condition $iv)$ of the same Lemma.
The nondegeneracy of the restriction to $\g_O\times\g_O$ of the bilinear form $\widetilde\Phi_y$  will follow from how the roots in $O$ are arranged. 
Observe that for roots $\alpha,\,\beta\in O$, one has $K(y,\,[x_{\alpha},\,x_{\beta}]_{\widetilde\p})\neq 0$  if and only if $\alpha+\beta\in S$ and $[x_{\alpha},\,x_{\beta}]_{\widetilde\p}\neq 0$, that is, if and only if  $\alpha+\beta\in S$ and $\alpha$ and $\beta$ do not lie both in $\Delta^+\setminus\Delta^+_{\pi'}$.

It follows that we will construct in the following sections Heisenberg sets $\Gamma_{\gamma}$'s which satisfy the following condition {\bf (C)} :

{\bf Condition (C)} : 
\begin{align}\label{ConditionC}\forall\alpha\in O,\;\{\alpha,\theta(\alpha)\}\cap\Delta^+\setminus\Delta^+_{\pi'}\le 1.\end{align}

For every root $\alpha\in O$, we set $S_{\alpha}=\{\beta\in O\mid \alpha+\beta \in S\;\hbox{\rm and}\;[x_{\alpha},\,x_{\beta}]_{\widetilde\p}\neq 0\}$. With condition {\bf (C)} (Eq. \ref{ConditionC}), we  have that $\theta(\alpha)\in S_{\alpha}$. 
Set for any positive integer $n$, $O_n=\{\alpha\in O\mid \lvert S_{\alpha}\rvert=n\}$. If $\alpha\in O_1$, then the only root in $O$ lying in $S_{\alpha}$ is $\theta(\alpha)$.
Then if $O=O_1$ we have that, up to a non zero scalar, \begin{equation*}\det(\widetilde\Phi_{{y}_{\mid\g_O\times\g_O}})=\prod_{\alpha\in O}K(y,\,[x_{\alpha},\,x_{\theta(\alpha)}]_{\widetilde\p})\neq 0.\end{equation*}
Unfortunately it is not always possible to choose Heisenberg sets $\Gamma_{\gamma}$'s satisfying the condition that $O=O_1$.

Like in \cite[Sec. 6]{FL1}, we
 denote by $S^m$ the subset of $S$ consisting of those $\gamma\in S$ for which the Heisenberg set $\Gamma_{\gamma}$ contains both negative and positive roots. 
 Denote also by  $S^+$, resp. $S^-$, the subset of $S$ consisting of those $\gamma\in S$ for which the Heisenberg set $\Gamma_{\gamma}\subset\Delta^+$, resp. $\Gamma_{\gamma}\subset\Delta^-_{\pi'}$.

Set \begin{align*}\Gamma=\sqcup_{\gamma\in S}\Gamma_{\gamma},\;\Gamma^m=\sqcup_{\gamma\in S^m}\Gamma_{\gamma},\;\Gamma^+=\sqcup_{\gamma\in S^+}\Gamma_{\gamma},\;\Gamma^-=\sqcup_{\gamma\in S^-}\Gamma_{\gamma}\\
O=\sqcup_{\gamma\in S}\Gamma_{\gamma}^0,\;O^m=\sqcup_{\gamma\in S^m}\Gamma_{\gamma}^0,\;O^+=\sqcup_{\gamma\in S^+}\Gamma_{\gamma}^0,\;O^-=\sqcup_{\gamma\in S^-}\Gamma_{\gamma}^0
\end{align*}
We have $S=S^m\sqcup S^+\sqcup S^-$, $\Gamma=\Gamma^m\sqcup\Gamma^+\sqcup\Gamma^-$ and $O=O^m\sqcup O^+\sqcup O^-$. 

We will add the following condition {\bf (C')} :

{\bf Condition (C')} : 
\begin{align}\label{ConditionC'}\begin{cases}
 O=O_1\sqcup O_2\sqcup O_3\\
\alpha\in O_3\Longrightarrow\exists\widetilde\alpha\in S_{\alpha}\cap O_2\setminus\{\theta(\alpha)\};\;\theta(\widetilde\alpha)\in O_1\\
\end{cases}\end{align}

\begin{nota}\label{notaseq}\rm
If $\alpha\in O_2$, we will set $\widetilde\alpha=\theta(\alpha)$.
\end{nota}

From now on, we assume that we have constructed disjoint Heisenberg sets $\Gamma_{\gamma}$ with centre $\gamma$ in $\Delta(\pi')$ satisfying conditions {\bf (C)} and {\bf (C')} (Eq. \ref{ConditionC} and Eq. \ref{ConditionC'}).
As in \cite[Sec. 4]{FL1} we define below, for any $\alpha\in O$, the sequences $(\alpha^i)_{i\in\mathbb N}$ and $(\alpha^{(i)})_{i\in\mathbb N}$ constructed from $\alpha$ and $\theta(\alpha)$ respectively. 

\begin{defi}\label{seq}\rm
 Let $\alpha\in O$. We define the sequence $(\alpha^{(n)})_{n\in\mathbb N}$ of roots in $O$ constructed from the root $\theta(\alpha)$ as follows. 

 \begin{enumerate}
 \item Set $\alpha^{(0)}=\theta(\alpha)$.
\item If $\alpha\in O_1$, set $\alpha^{(1)}=\theta(\alpha)=\alpha^{(0)}$.
\item If $\alpha\in O_2$, let $\alpha^{(1)}$ be the root in $S_{\alpha}\setminus\{\theta(\alpha)\}$, namely $\alpha^{(1)}+\alpha\in S$, $\alpha^{(1)}\neq\theta(\alpha)$, $\alpha$ and $\alpha^{(1)}$ do not lie both in $\Delta^+\setminus\Delta^+_{\pi'}$.
\item If $\alpha\in O_3$, then we choose $\alpha^{(1)}\in S_{\alpha}\setminus\{\theta(\alpha),\,\widetilde{\alpha}\}$.
\item Assume that $\alpha\in O_2\sqcup O_3$. Then  $\alpha^{(1)}\in S_{\alpha}\setminus\{\widetilde\alpha,\,\theta(\alpha)\}$ (in view of notation \ref{notaseq}). Then observe that $\alpha^{(1)}\not\in O_1$ since otherwise we should have that $\theta(\alpha^{(1)})=\alpha=\theta(\alpha^{(0)})$ and then that $\alpha^{(1)}=\alpha^{(0)}=\theta(\alpha)$, which is not possible by the previous construction. If moreover $\alpha^{(1)}\in O_2$, then necessarily we have that $(\alpha^{(1)})^{(1)}=\alpha$. Now if $\alpha^{(1)}\in O_3$, we have again that $\alpha=(\alpha^{(1)})^{(1)}$ provided that  $\alpha\neq\widetilde{\alpha^{(1)}}$. 
In both cases, we then  have that $\alpha=(\alpha^{(1)})^{(1)}$ provided that $\alpha\neq\widetilde{\alpha^{(1)}}$. Observe that, if $\alpha^{(1)}\in O_2$, then the condition that $\alpha\neq\widetilde{\alpha^{(1)}}$ is always true since in this case we have set $\widetilde{\alpha^{(1)}}=\theta(\alpha^{(1)})$.
\item This defines inductively a sequence $(\alpha^{(n)})_{n\in\mathbb N}$ of roots in $O$
 such that for any $n\in\mathbb N$ :
 \begin{enumerate}
\item if $\theta(\alpha^{(n)})\in O_1$, then $\alpha^{(n+1)}=\alpha^{(n)}$
\item\label{eqbseq}  if $\theta(\alpha^{(n)})\in O_2\sqcup O_3$, then $\alpha^{(n+1)}\in S_{\theta(\alpha^{(n)})}\setminus\{\alpha^{(n)},\,\widetilde{\theta(\alpha^{(n)})}\}$.
\item\label{eqseq} Let $n\in\mathbb N^*$ such that $\theta(\alpha^{(n-1)})\in O_2\sqcup O_3$. Then $\alpha^{(n)}\not\in O_1$ and $(\alpha^{(n)})^{(1)}=\theta(\alpha^{(n-1)})$, 
provided that $\theta(\alpha^{(n-1)})\neq\widetilde{\alpha^{(n)}}$.

\end{enumerate}
\end{enumerate}
\end{defi}

Similarly we define the sequence $(\alpha^n)_{n\in\mathbb N}$ of roots in $O$ constructed from $\alpha$, as follows.
\begin{defi}\label{seqbis}\rm
Let $\alpha\in O$. We define inductively the sequence $(\alpha^{n})_{n\in\mathbb N}$ of roots in $O$ constructed from the root $\alpha$ as follows. 
 \begin{enumerate}
 \item $\alpha^{0}=\alpha$.
\item For any $n\in\mathbb N$,
\begin{enumerate}
\item if $\theta(\alpha^n)\in O_1$, then $\alpha^{n+1}=\alpha^{n}$
\item\label{eq1bisseq} if $\theta(\alpha^n)\in O_2\sqcup O_3$, then $\alpha^{n+1}\in S_{\theta(\alpha^n)}\setminus\{\alpha^n,\,\widetilde{\theta(\alpha^n)}\}$.
\item\label{eqbisseq} Let $n\in\mathbb N^*$ such that $\theta(\alpha^{n-1})\in O_2\sqcup O_3$. Then $\alpha^{n}\not\in O_1$ and $(\alpha^{n})^{(1)}=\theta(\alpha^{n-1})$, provided that
$\theta(\alpha^{n-1})\neq\widetilde{\alpha^{n}}$.
\end{enumerate}
\end{enumerate}
\end{defi}

\begin{Rqs}\label{Rkseq}\rm
Let  $i$ be any nonnegative integer and $\alpha\in O$.
\begin{enumerate}
\item\label{eqRkseq} We have that $\alpha^{(i)}=\theta(\alpha)^i$.
\item\label{eq2Rkseq} $(\alpha^i)^{1}=\alpha^{i+1}$ and $(\alpha^{(i)})^1=\alpha^{(i+1)}$.
\end{enumerate}
\end{Rqs}

\begin{defi}\rm
Let $\alpha\in O$, for which there exists no $\beta\in O_3$ such that $\widetilde\beta=\alpha$ or $\theta\bigl(\widetilde\beta\bigr)=\alpha$.
\begin{enumerate} 
\item If there exists $n\in\mathbb N$ such that $\alpha^{(n+1)}=\alpha^{(n)}$, we say that the sequence $(\alpha^{(i)})_{i\in\mathbb N}$ is {\bf stationary} and the {\bf rank} of this sequence is  the minimal of such an $n$.
\item Similarly for saying that the sequence $(\alpha^{i})_{i\in\mathbb N}$ is {\bf stationary} and for the {\bf rank} of this stationary sequence. 
\item We say that $\alpha$ is a {\bf stationary root} if both sequences $(\alpha^{(i)})_{i\in\mathbb N}$ and $(\alpha^{i})_{i\in\mathbb N}$ are stationary (see Figure \ref{figstat}).
\end{enumerate}
\end{defi}

\begin{Rq}\rm
The definition of a stationary root will be very useful. Recall that, for any $\alpha\in\Delta$, the vector $x_{\alpha}\in\g_{\alpha}\setminus\{0\}$ is a priori fixed, but we will possibly rescale some of these vectors, except for the vectors $x_{-\gamma}$ with $\gamma\in S$ since $y=\sum_{\gamma\in S}x_{-\gamma}$ is fixed. Take a root $\alpha\in O$ and fix the nonzero root vector $x_{-\alpha}$. Assume that the sequence $(\alpha^i)_{i\in\mathbb N}$ is stationary at rank $n_0$. Set $\gamma_{i_0}=\alpha+\theta(\alpha)\in S$, $\gamma_{i_1}=\alpha^1+\theta(\alpha)\in S$. 
If $n_0=0$ that is, if $\theta(\alpha)\in O_1$, then we have that, up to rescaling the vector $x_{\theta(\alpha)}$
\begin{equation*}\ad^*x_{\theta(\alpha)}(y)=\ad^*x_{\theta(\alpha)}(x_{-\gamma_{i_0}})=x_{-\alpha}\mod \g_{-S}\oplus\g_{-T}.\end{equation*} 

Assume from now on that $n_0\ge 1$. Then $\alpha^1\neq\alpha^0$. Since condition $({\bf C'})$ is assumed, two cases may occur :
\begin{enumerate}
\item If $\theta(\alpha)\in O_2$ we have that, up to rescaling the vectors $x_{\theta(\alpha)}$ and $x_{-\alpha^1}$ 
\begin{equation*}\ad^*x_{\theta(\alpha)}(y)=\ad^*x_{\theta(\alpha)}(x_{-\gamma_{i_0}}+x_{-\gamma_{i_1}})=x_{-\alpha}+x_{-\alpha^1}\mod \g_{-S}\oplus\g_{-T}.\end{equation*}
\item If $\theta(\alpha)\in O_3$ we have that, up to rescaling the vectors $x_{\theta(\alpha)}$, $x_{-\alpha^1}$ and $x_{-\widetilde{\theta(\alpha)}}$ 
\begin{equation*}\ad^*x_{\theta(\alpha)}(y)=x_{-\alpha}+x_{-\alpha^1}+x_{-\widetilde{\theta(\alpha)}}\mod \g_{-S}\oplus\g_{-T}.\end{equation*} 
 Since $\theta\bigl(\widetilde{\theta(\alpha)}\bigr)\in O_1$ we have that, up to rescaling the vector $x_{\theta\bigl(\widetilde{\theta(\alpha)}\bigr)}$
 \begin{equation*}\ad^* x_{\theta\bigl(\widetilde{\theta(\alpha)}\bigr)}(y)=x_{-\widetilde{\theta(\alpha)}}\mod\g_{-S}\oplus\g_{-T}.\end{equation*} 
 \item We may continue until we obtain the root $\alpha^{n_0}$ such that $\theta(\alpha^{n_0})\in O_1$, namely until we obtain that, up to rescaling the vector $x_{\theta(\alpha^{n_0})}$ 
\begin{equation*}\ad^*x_{\theta(\alpha^{n_0})}(y)=x_{-\alpha^{n_0}}\mod \g_{-S}\oplus\g_{-T}.\end{equation*} 
\item We obtain finally that 
 \begin{equation*}x_{-\alpha}\in\ad^*\g_O(y)+\g_{-S}+\g_{-T}\end{equation*} 
 \end{enumerate}
 Thus we obtain Eq. (\ref{gO}) in Proof of Lemma \ref{Adaplm}.
The relevance of stationary roots will be also given by Lemma \ref{statlm} and Prop. \ref{condnondegen}.
\end{Rq}

\begin{nota}\rm
For a root $\alpha\in O$, for which there exists no $\beta\in O_3$ such that $\widetilde\beta=\alpha$ or $\theta\bigl(\widetilde\beta\bigr)=\alpha$, set $C_{\alpha}:=\{\alpha^i,\,\theta(\alpha^i),\,\alpha^{(i)},\,\theta(\alpha^{(i)}); i\in\mathbb N\}$ and call it {\bf the chain passing through $\alpha$}. Denote by $\widetilde{C_{\alpha}}$ the root subsystem of $O$ consisting of the roots $\widetilde\beta_i$ and $\theta\bigl(\widetilde\beta_i\bigr)$ whenever $\beta_i\in O_3\cap C_{\alpha}$.
Then $C_{\alpha}\sqcup\widetilde{C_{\alpha}}$ (which is finite since the root system $\Delta$ is finite) can be represented by a graph, with vertices being the elements in $C_{\alpha}\sqcup\widetilde{C_{\alpha}}$ and edges between two roots in $C_{\alpha}\sqcup\widetilde{C_{\alpha}}$ which are linked if their sum  belongs to $S$ and if they do not lie both in $\Delta^+\setminus\Delta^+_{\pi'}$. In particular if one vertex (say $\beta_i$) in $C_{\alpha}$ belongs to $O_3$ then we draw an edge between $\beta_i$ and $\widetilde\beta_i\in S_{\beta_i}\setminus\{\beta_i^{(1)},\,\theta(\beta_i)\}$
and another edge between $\widetilde\beta_i$ and $\theta\bigl(\widetilde\beta_i\bigr)$. 
\end{nota}


\vskip 0.5cm

\begin{figure}[!h]
\centering

\begin{tikzpicture}
[scale=.5]
\draw[gray, thick] (0,0) -- (2,2);
\draw[gray, thick] (2,2) -- (5,2);
\draw[gray, thick] (5,2) -- (8,2);
\draw[gray, thick] (8,2) -- (11,2);
\draw[gray, thick] (0,0) -- (2,-2);
\draw[gray, thick] (2,-2) -- (5,-2);
\draw[gray, thick] (5,-2) -- (8,-2);
\draw[gray, thick] (5,-2) -- (5,-4);
\draw[gray,thick] (5,-4) -- (5,-6);
\filldraw[black] (2,2) circle (2pt) node[anchor=north]{$\alpha^{(1)}$};
\filldraw[black] (5,2) circle (2pt) node[anchor=north]{$\theta(\alpha^{(1)})$};
\filldraw[black] (0,0) circle (2pt) node[anchor=east]{$\alpha$};
\filldraw[black] (8,2) circle (2pt) node[anchor=south]{$\alpha^{(2)}=\alpha^{(3)}$};
\filldraw[black] (2,-2) circle (2pt) node[anchor=south]{$\theta(\alpha)$};
\filldraw[black](5,-2) circle (2pt) node[anchor=south]{$\alpha^1=\alpha^2$};
\filldraw[black] (8,-2) circle (2pt) node[anchor=south]{$\theta(\alpha^1)$};
\filldraw[black] (11,2) circle (2pt) node[anchor=north]{$\theta(\alpha^{(2)})$};
\filldraw[black] (5,-4) circle (2pt) node[anchor=west]{$\widetilde{\alpha^1}$};
\filldraw[black] (5,-6) circle (2pt) node[anchor=west]{$\theta\Bigl(\widetilde{\alpha^1}\Bigr)$};
\end{tikzpicture}

\caption{The graph $C_{\alpha}\sqcup\widetilde{C_{\alpha}}$ for a stationary root $\alpha$ with the sequence $(\alpha^i)_{i\in\mathbb N}$ stationary  at rank $1$ and the sequence $(\alpha^{(i)})_{i\in\mathbb N}$ stationary at rank $2$ and with $\alpha^1\in O_3$.}
\label{figstat}
\end{figure}
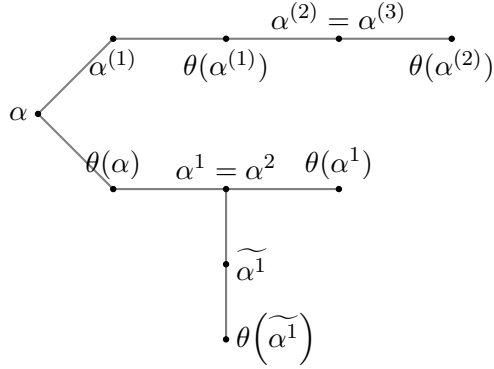

\begin{figure}[!h]

\centering
\begin{tikzpicture}
[scale=0.5]
\draw[gray, thick] (0,0) --(2,2);
\draw[gray, thick] (2,2) -- (5,2);
\draw[gray, thick] (5,2) -- (7,0);
\draw[gray, thick] (0,0) -- (2,-2);
\draw[gray, thick] (2,-2)-- (5,-2);
\draw[gray, thick] (5,-2) -- (7,0);
\draw[gray, thick] (5,-2) -- (5,-4);
\draw[gray, thick] (5,-4) -- (5,-6);
\filldraw[black] (0,0) circle (2pt) node[anchor=east]{$\alpha=\theta(\alpha^{(3)})=\alpha^3$};
\filldraw[black] (2,2) circle (2pt) node[anchor=east]{$\alpha^{(1)}=\theta(\alpha^2)=\alpha^{(4)}$};
\filldraw[black] (5,2) circle (2pt) node[anchor=west]{$\theta(\alpha^{(1)})=\alpha^2$};
\filldraw[black] (7,0) circle (2pt) node[anchor=west]{$\alpha^{(2)}=\theta(\alpha^1)$};
\filldraw[black] (2,-2) circle (2pt) node[anchor=east]{$\theta(\alpha)=\alpha^{(3)}$};
\filldraw[black](5,-2) circle (2pt) node[anchor=west]{$\alpha^1=\theta(\alpha^{(2)})$};
\filldraw[black] (5,-4) circle (2pt) node[anchor=west]{$\widetilde{\alpha^1}$};
\filldraw[black] (5,-6) circle (2pt) node[anchor=west]{$\theta\Bigl(\widetilde{\alpha^1}\Bigr)$};
\end{tikzpicture}

\caption{The graph $C_{\alpha}\sqcup\widetilde{C_{\alpha}}$ for a root $\alpha$ which is not stationary, with $\alpha^1\in O_3$.}
\label{fignstat}
\end{figure}
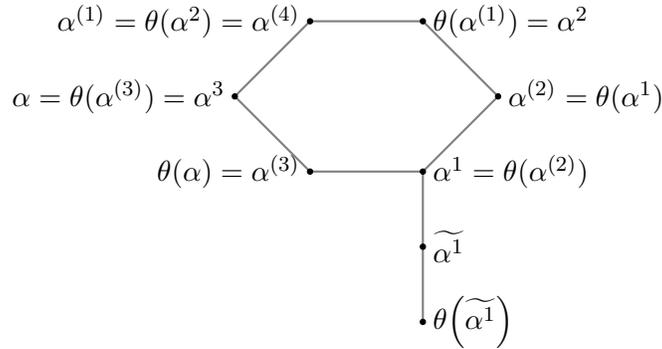

We will show actually that only one of the two situations illustrated  in Figures \ref{figstat} or \ref{fignstat} holds.

\begin{lm}\label{proproots}
Let $\alpha\in O$, for which there exists no $\beta\in O_3$ such that $\alpha=\widetilde\beta$ or $\alpha=\theta\bigl(\widetilde\beta\bigr)$.
\begin{enumerate}
\item\label{prop1roots} Any root $\gamma$ in the chain $C_{\alpha}$ is also such that there exists no $\beta\in O_3$ such that $\gamma=\widetilde\beta$ or $\gamma=\theta\bigl(\widetilde\beta\bigr)$.
\item\label{prop2roots} If one of the two sequences $(\alpha^i)_{i\in\mathbb N}$ or $(\alpha^{(i)})_{i\in\mathbb N}$ is stationary, then the other one is also stationary and then $\alpha$ is a stationary root.
\item\label{prop3roots} If $\alpha$ is a stationary root, then any root in the chain $C_{\alpha}$  is also a stationary root. 
\item\label{prop4roots} If $\alpha$ is not a stationary root, then there exists $j\in\mathbb N$, $j>1$, such that $\alpha=\alpha^j=\theta(\alpha^{(j)})$. 
\item\label{prop5roots} If there exists an integer $j$, $j>1$, such that $\alpha=\alpha^j$ and $\alpha\neq\alpha^{j-1}$, then one has that $\alpha=\alpha^j=\theta(\alpha^{(j)})$ and the root $\alpha$ is not stationary. Such a root $\alpha$ is called {\bf a cyclic root}. 
\item\label{prop6roots} If $\alpha$ is a cyclic root then any root in the chain $C_{\alpha}$ is also a cyclic root.
\end{enumerate}

\end{lm}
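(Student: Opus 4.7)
The plan is to view both sequences as halves of a single doubly-infinite walk on the finite set $O$,
\[
\ldots,\theta\bigl(\alpha^{(2)}\bigr),\alpha^{(2)},\theta\bigl(\alpha^{(1)}\bigr),\alpha^{(1)},\alpha,\theta(\alpha),\alpha^1,\theta\bigl(\alpha^1\bigr),\alpha^2,\ldots,
\]
whose consecutive elements sum to some element of $S$. By Remark~\ref{Rkseq}(\ref{eqRkseq}) the left half is $(\alpha^{(i)})_{i\in\mathbb N}$ and the right half is $(\alpha^i)_{i\in\mathbb N}$. Under conditions $({\bf C})$ and $({\bf C'})$, at a vertex $\beta$ preceded by $\beta'$ the next vertex is uniquely determined: if $\theta(\beta')\in O_1$ the walk becomes constant at $\beta'$, and otherwise it is the unique element of $S_\beta\setminus\{\beta',\widetilde\beta\}$ (a singleton by a quick case check on $|S_\beta|\in\{2,3\}$ using Notation~\ref{notaseq}). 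The same determinism holds in reverse, and it is this reversibility that underlies the whole lemma.

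First I would establish (\ref{prop1roots}) by induction along the walk: show that no $\alpha^i$, $\theta(\alpha^i)$, $\alpha^{(i)}$ or $\theta(\alpha^{(i)})$ coincides with $\widetilde\beta$ or $\theta(\widetilde\beta)$ for any $\beta\in O_3$. The base case is the hypothesis on $\alpha$, applied also to $\theta(\alpha)$ via the involutivity of $\theta$. For the inductive step, note that $\widetilde\beta\in O_2$ has exactly two neighbors in the ``sum-in-$S$'' graph, namely $\beta$ and $\theta(\widetilde\beta)$, while $\theta(\widetilde\beta)\in O_1$ has only $\widetilde\beta$ as neighbor; landing on either at an intermediate step would violate the exclusion clauses of Definition~\ref{seqbis}. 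As a byproduct, the provisos ``provided $\theta(\alpha^{n-1})\neq\widetilde{\alpha^n}$'' in Definitions~\ref{seq} and~\ref{seqbis} automatically hold throughout, so the relations $(\alpha^n)^{(1)}=\theta(\alpha^{n-1})$ and $\alpha^{(i)}=\theta(\alpha)^i$ always apply.

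For (\ref{prop2roots}) and (\ref{prop3roots}), use the identity $(\alpha^n)^{(1)}=\theta(\alpha^{n-1})$, which says that starting the ``other'' sequence at $\alpha^n$ retraces the walk's past by one step. If $(\alpha^i)$ stabilizes at rank $n_0$, running the step rule backwards from $\alpha^{n_0}$ produces the sequence $(\alpha^{(i)})$, which, by the finiteness of $O$ and the determinism of the reversed walk, must also terminate; this gives (\ref{prop2roots}). For (\ref{prop3roots}), every root in $C_\alpha$ is of the form $\alpha^i$, $\theta(\alpha^i)$, $\alpha^{(i)}$ or $\theta(\alpha^{(i)})$, and its own two walks are sub-walks of the original bi-infinite walk, hence terminate at both ends.

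For (\ref{prop4roots})--(\ref{prop6roots}), assume $\alpha$ is not stationary, so by (\ref{prop2roots}) the walk terminates in neither direction. As a deterministic walk on the finite set $O$, it is eventually periodic; backward determinism (supplied by (\ref{prop1roots})) upgrades this to periodicity from the start, with some minimal period $j>0$. Since $\alpha^1\in S_{\theta(\alpha)}\setminus\{\alpha,\widetilde{\theta(\alpha)}\}$ is distinct from $\alpha=\alpha^0$, one has $j>1$, and translating the $j$-step backward walk via Remark~\ref{Rkseq}(\ref{eqRkseq}) yields $\theta(\alpha^{(j)})=\alpha^j=\alpha$, proving (\ref{prop4roots}). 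For (\ref{prop5roots}), the hypothesis $\alpha^j=\alpha$ with $\alpha^{j-1}\neq\alpha$ prevents $(\alpha^i)$ from being stationary: were it stationary from some rank $n_0$, the combination of stationarity and periodicity with period dividing $j$ would force $\alpha^{j-1}=\alpha$, a contradiction; hence (\ref{prop4roots}) applies. Finally (\ref{prop6roots}) follows as in (\ref{prop3roots}), since every root of $C_\alpha$ lies on the same periodic orbit. The main obstacle is part (\ref{prop1roots}): the clean determinism in both directions relies on avoiding the exceptional roots $\widetilde\beta$ and $\theta(\widetilde\beta)$, and verifying this by induction requires careful use of $({\bf C'})$ together with the involutive nature of $\theta$.
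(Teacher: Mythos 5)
Your proposal is correct and follows essentially the same route as the paper: both rest on the reversibility relations built into Definitions~\ref{seq}--\ref{seqbis} and Remark~\ref{Rkseq} together with the finiteness of $O$ (the paper's induction showing $\{\alpha^{(k)};\,0\le k\le n\}$ has cardinality $n+1$ under hypothesis (H), and its handling of the case $\alpha^{(n)}=\alpha^{(0)}$ via the Euclidean division $n_0=nq+r$, is exactly the precise form of your ``finite deterministic walk is a path or a cycle'' dichotomy), and the main obstacle you identify, proving part~(\ref{prop1roots}) so that the provisos in the definitions hold automatically, is exactly where the paper's first inductive argument is expended. One point to tighten: for part~(\ref{prop5roots}) you derive non-stationarity and then ``hence~(\ref{prop4roots}) applies,'' but~(\ref{prop4roots}) produces only \emph{some} cyclic index $j'$ with $\alpha=\theta(\alpha^{(j')})$, whereas the statement asserts $\alpha=\theta(\alpha^{(j)})$ for the given $j$; the paper instead shows $\alpha^{(k)}=\theta(\alpha^{j-k})$ for $0\le k\le j$ directly from $\alpha=\alpha^j$ (via \ref{eqbisseq} of Definition~\ref{seqbis} and Remark~\ref{Rkseq}), and you would need to combine your periodicity observation with that translation identity to recover the claimed equality for $j$ itself.
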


\begin{proof}

\begin{enumerate}

\item Let $\alpha\in O$ such that there exists no $\beta\in O_3$ such that $\alpha=\widetilde\beta$ or $\alpha=\theta\bigl(\widetilde\beta\bigr)$. Then it is obviously the same for $\theta(\alpha)$.
Now suppose that, for some $k\in\mathbb N$, we have that, for any $i\in\mathbb N$, $i\le k$, $\alpha^i$ and $\alpha^{(i)}$ are not equal to $\widetilde\beta$ and to $\theta(\widetilde\beta)$ for any $\beta\in O_3$. Let $\gamma=\alpha^{k+1}$. Suppose firstly that there exists $\beta\in O_3$ such that $\gamma=\widetilde\beta$. Then $\gamma\in O_2$ and $\gamma^{(1)}=\beta=(\alpha^{k+1})^{(1)}=\theta(\alpha^k)$ by \ref{eqbisseq} of Definition \ref{seqbis} and the induction hypothesis. But now $\widetilde\beta=\widetilde{\theta(\alpha^k)}=\alpha^{k+1}$ which is impossible by \ref{eq1bisseq} of Definition \ref{seqbis}. Suppose now that there exists $\beta\in O_3$ such that $\gamma=\alpha^{k+1}=\theta\bigl(\widetilde\beta\bigr)$. Then $\gamma=\alpha^{k+1}\in O_1$ which implies that $\theta(\alpha^{k+1})=\theta(\alpha^k)$ and then that $\alpha^{k+1}=\alpha^k=\theta(\widetilde\beta)$, which is impossible by the induction hypothesis. Obviously this also implies that $\theta(\alpha^{k+1})$ is not equal to $\widetilde\beta$ and to $\theta\bigl(\widetilde\beta\bigr)$ for any $\beta\in O_3$. A similar argument applies to show that $\alpha^{(k+1)}$ (and then also $\theta(\alpha^{(k+1)})$) is not equal to $\widetilde\beta$ and to $\theta\bigl(\widetilde\beta\bigr)$ for any $\beta\in O_3$. This proves the first assertion of our lemma.

\item Assume that the sequence $(\alpha^i)_{i\in\mathbb N}$ is stationary at rank $n_0$. Then we will show that it is not possible that, for any $i\in\mathbb N$, $\alpha^{(i)}\neq\alpha^{(i+1)}$. Indeed we will show that the hypothesis 
\begin{equation*}\forall i\in\mathbb N,\;\alpha^{(i)}\neq\alpha^{(i+1)}\tag{H}\label{Hyp}\end{equation*}
implies that, for any $n\in\mathbb N$, the cardinality of the set $\{\alpha^{(k)};\;0\le k\le n\}$ is equal to $n+1$, which is impossible. The cases $n=0$ and $n=1$ are immediate. Assume that, for some $n-1\in\mathbb N^*$, the cardinality of the set $\{\alpha^{(k)};\;0\le k\le n-1\}$ is equal to $n$. Then if $\alpha^{(n)}=\alpha^{(k)}$ for some $1\le k\le n-1$,  it implies that $(\alpha^{(n)})^{(1)}=(\alpha^{(k)})^{(1)}$.  This gives that $\theta(\alpha^{(n-1)})=\theta(\alpha^{(k-1)})$ by hypothesis (\ref{Hyp}) and \ref{eqseq} of Definition \ref{seq}, in view of our hypothesis for $\alpha$ and the first assertion of the lemma. We deduce that $\alpha^{(n-1)}=\alpha^{(k-1)}$ which is a contradiction with the induction hypothesis. It remains to show that we cannot have that $\alpha^{(n)}=\alpha^{(0)}$. Suppose the opposite. Divide $n_0$ by $n$ : $n_0=nq+r$ with $0\le r\le n-1$, $q ,\,r\in\mathbb N$.
We have :
\begin{align*}\alpha^{(n)}=\alpha^{(0)}=\theta(\alpha)\Longrightarrow \alpha=\theta(\alpha^{(n)})\\
\Longrightarrow \alpha^1=\theta(\alpha^{(n)})^1=(\alpha^{(n)})^{(1)}=\theta(\alpha^{(n-1)})
\end{align*}
by \ref{eqRkseq} of Remark \ref{Rkseq} and by \ref{eqseq} of Definition \ref{seq} in view of hypothesis (\ref{Hyp}). 

Using \ref{eq2Rkseq} of Remark \ref{Rkseq} and \ref{eqseq} of Definition \ref{seq} in view of hypothesis (\ref{Hyp}), an induction on $k$ gives that \begin{equation*}\forall k\in\mathbb N,\;0\le k\le n,\;\alpha^k=\theta(\alpha^{(n-k)})\;\tag{$\star$}\label{eqseqlm}.\end{equation*}
In particular we have that $\alpha^{n}=\theta(\alpha^{(0)})=\alpha$.
Then using \ref{eq2Rkseq} of Remark \ref{Rkseq} one obtains that, for any $k\in\mathbb N$, one has $\alpha^{kn}=\alpha$ and especially that 
\begin{equation*}\alpha^{nq}=\alpha.\end{equation*}
Using again \ref{eq2Rkseq} of Remark \ref{Rkseq} and also (\ref{eqseqlm}) with $k=r$ and also with $k=r+1$, one obtains that
\begin{align*}\alpha^{n_0}=\alpha^{nq+r}= \alpha^r=\theta(\alpha^{(n-r)})\end{align*}
and \begin{align*}\alpha^{n_0+1}=\alpha^{nq+r+1}= \alpha^{r+1}=\theta(\alpha^{(n-r-1)}).\end{align*}
Since $\alpha^{n_0+1}=\alpha^{n_0}$, one deduces that $\alpha^{(n-r)}=\alpha^{(n-r-1)}$, which contradicts hypothesis  (\ref{Hyp}). It follows that hypothesis  (\ref{Hyp}) and the fact that the sequence $(\alpha^i)_{i\in\mathbb N}$ is stationary imply that, for any $n\in\mathbb N$, the cardinality of the set $\{\alpha^{(k)};\;0\le k\le n\}$ is equal to $n+1$, which  contradicts the finiteness of the root system of $\g$. Then if the sequence $(\alpha^i)_{i\in\mathbb N}$ is stationary, we cannot have hypothesis (\ref{Hyp}) and then  the sequence $(\alpha^{(i)})_{i\in\mathbb N}$ is also stationary. Exchanging $\alpha$ with $\theta(\alpha)$ gives the reverse implication.

\item Assume that $(\alpha^i)_{i\in\mathbb N}$ is stationary at rank $n_0$ and $(\alpha^{(i)})_{i\in\mathbb N}$ is stationary at rank $n_1$ and let $\gamma\in C_{\alpha}$. By the first assertion of the lemma, we already know that there exist no $\beta\in O_3$ such that $\gamma=\widetilde\beta$ or $\gamma=\theta(\widetilde\beta)$.
Assume firstly that $\gamma=\alpha^i$ for $0\le i\le n_0$. Then by \ref{eq2Rkseq} of Remark \ref{Rkseq} we have that, for any $j\in\mathbb N$, $$\gamma^j=\alpha^{i+j}.$$
 It follows  that $\gamma^{n_0+1-i}=\gamma^{n_0-i}$ hence that $\gamma$ is stationary.
 Assume that $\gamma=\alpha^{(i)}$ for some $0\le i\le n_1$, then by \ref{eq2Rkseq} of Remark \ref{Rkseq}  we have that $$\gamma^{j}=\alpha^{(i+j)}$$ for any $j\in\mathbb N$. It follows  that $\gamma^{n_1+1-i}=\gamma^{n_1-i}$ hence that $\gamma$ is stationary. 
 If $\gamma=\theta(\alpha^i)$ for some $0\le i\le n_0$, then by \ref{eqRkseq} of Remark \ref{Rkseq} and the above we have that for any $j\in\mathbb N$, $$\gamma^{(j)}=\alpha^{i+j}.$$ It follows that $\gamma^{(n_0+1-i)}=\gamma^{(n_0-i)}$ hence that $\gamma$ is stationary. 
 Finally if $\gamma=\theta(\alpha^{(i)})$ for some $0\le i\le n_1$, then by \ref{eqRkseq} of Remark \ref{Rkseq} and the above we have that for any $j\in\mathbb N$, $$\gamma^{(j)}=\alpha^{(i+j)}.$$ It follows that $\gamma^{(n_1+1-i)}=\gamma^{(n_1-i)}$ hence that $\gamma$ is stationary.

\item Assume that the sequence $(\alpha^i)_{i\in\mathbb N}$ and the sequence $(\alpha^{(i)})_{i\in\mathbb N}$ are not stationary. Because of the finiteness of the root system, there exists a positive integer $n$ such that the cardinality of the set $\{\alpha^{(k)};\;0\le k\le n\}$ is strictly smaller than $n+1$ and then there exist integers $k,\, k'$, $0\le k\neq k'\le n$, such that $\alpha^{(k)}=\alpha^{(k')}$. Using \ref{eqseq} of Definition \ref{seq},  in view of the first assertion,  it follows that there exists an integer $j>1$ such that $\alpha^{(0)}=\alpha^{(j)}$ and then that $\alpha=\theta(\alpha^{(j)})$. Finally using \ref{eqRkseq} of Remark \ref{Rkseq} and \ref{eqseq} of Definition \ref{seq} gives that
\begin{equation*} \alpha^j=\theta(\alpha^{(0)})=\alpha\end{equation*} which gives the required equality.

\item Now assume that there exists an integer $j>1$ such that $\alpha=\alpha^j$ and $\alpha\neq\alpha^{j-1}$. Using \ref{eqbisseq} of Definition \ref{seqbis} and \ref{eqRkseq} and \ref{eq2Rkseq} of Remark \ref{Rkseq},  in view of the first assertion, one obtains that
\begin{equation*}\forall 0\le k\le j,\, \alpha^{(k)}=\theta(\alpha^{j-k}).\label{eq3seq}\end{equation*}

Taking $k=j$, we obtain $\alpha^{(j)}=\theta(\alpha^0)=\theta(\alpha)$ and then $\alpha=\alpha^j=\theta(\alpha^{(j)})$.

Moreover a similar argument as in (2) gives that
\begin{equation*}\alpha=\alpha^j\Longrightarrow\forall k\in\mathbb N,\, \alpha^{jk}=\alpha\end{equation*}

Assume that there exists $n\in\mathbb N$ such that $\alpha^{n+1}=\alpha^n$.  Dividing $n$ by $j$ gives $n=jq+r$, with $q,\,r\in\mathbb N$, $0\le r\le j-1$, and we have :
\begin{align*}\begin{cases}\alpha=\alpha^j\\
\alpha^{n+1}=\alpha^n\end{cases}\Longrightarrow \alpha^{n}=\alpha^{jq+r}=\alpha^r=\alpha^{jq+r+1}=\alpha^{r+1}\\
\Longrightarrow\alpha^r=\alpha^{r+1}=\ldots=\alpha^{j-1}=\alpha^j=\alpha\end{align*}
which contradicts the hypothesis that $\alpha\neq\alpha^{j-1}$. Then the hypothesis that there exists an integer $j>1$ such that $\alpha=\alpha^j$ and $\alpha\neq\alpha^{j-1}$ implies that the sequence $(\alpha^i)_{i\in\mathbb N}$ is not stationary and then that the root $\alpha$ is not stationary.

\item Assume that $\alpha$ is a cyclic root and take $\gamma\in C_{\alpha}$. Then we obtain that $\alpha\in C_{\gamma}$ (by similar arguments as before). Now if $\gamma$ is not a cyclic root, then by the previous assertions, $\gamma$ is a stationary root. Then by (3) it follows that $\alpha$ is also stationary, which contradicts (5).
\end{enumerate}\end{proof}

\begin{Rq}\rm

The notion of a cyclic root given here is more general than this given in \cite[Sec. 5]{FL1}. In particular in our present paper, if $\alpha$ is a cyclic root, then the restriction of $\widetilde\Phi_y$ to $\g_{C_{\alpha}}\times\g_{C_{\alpha}}$ need not be nondegenerate.
\end{Rq}

As in \cite[Lem. 4.4]{FL1}, we have the following Lemma.

\begin{lm}\label{statlm}
Let $\alpha\in O$, for which there exists no root $\beta\in O_3$ such that $\alpha=\widetilde\beta$ or $\alpha=\theta\bigl(\widetilde\beta\bigr)$. Assume that $\alpha$ is a stationary root. Let $\vartheta: O\longrightarrow O$ be a permutation such that, for all $\gamma\in O$, one has $\gamma+\vartheta(\gamma)\in S$. Then one has that 
$\vartheta_{\mid C_{\alpha}\sqcup\widetilde{C_{\alpha}}}=\theta_{\mid C_{\alpha}\sqcup\widetilde{C_{\alpha}}}$.
\end{lm}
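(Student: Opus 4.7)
The strategy is to propagate inward from the ``leaves'' of the graph $C_\alpha\sqcup\widetilde{C_\alpha}$ pictured in Figure \ref{figstat}. Since $\alpha$ is stationary, at the minimal ranks $n_0,n_1$ of stationarity of $(\alpha^i)_{i\in\mathbb N}$ and $(\alpha^{(i)})_{i\in\mathbb N}$, the roots $\theta(\alpha^{n_0})$ and $\theta(\alpha^{(n_1)})$ lie in $O_1$; moreover, for every $\beta'\in O_3\cap C_\alpha$, the root $\theta\bigl(\widetilde{\beta'}\bigr)$ also lies in $O_1$ by condition $({\bf C'})$. These $O_1$-vertices (together with the $O_2$-vertex $\widetilde{\beta'}$ whenever $\beta'\in O_3$) are the ``leaves'' from which I would run the induction. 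By Lemma \ref{proproots}(\ref{prop1roots}), the hypothesis on $\alpha$ (no $\beta'\in O_3$ with $\alpha=\widetilde{\beta'}$ or $\alpha=\theta\bigl(\widetilde{\beta'}\bigr)$) propagates to every root of the chain, which is essential to rule out spurious pairings.

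For the base step, fix a leaf $\beta\in O_1\cap(C_\alpha\sqcup\widetilde{C_\alpha})$. Since $|S_\beta|=1$ with unique element $\theta(\beta)$, if $\vartheta(\beta)=\delta$ were different from $\theta(\beta)$, then $\beta+\delta\in S$ but $[x_\beta,x_\delta]_{\widetilde\p}=0$; by definition of the contracted bracket this forces $\beta,\delta\in\Delta^+\setminus\Delta^+_{\pi'}$. A direct analysis using conditions $({\bf C})$ and $({\bf C'})$ together with the propagated hypothesis on $\alpha$ should rule this out and force $\vartheta(\beta)=\theta(\beta)$. For the inductive step, I would consider an interior vertex $\gamma$ of the chain such that $\theta(\gamma)$ is already known to be $\vartheta$-paired with its $\theta$-partner. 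By Definitions \ref{seq} and \ref{seqbis} and the description of the chain, the candidates in $O$ for $\vartheta(\gamma)$ other than $\theta(\gamma)$ lie in $S_\gamma\subset C_\alpha\sqcup\widetilde{C_\alpha}$ and are at most three in number; the bijectivity of $\vartheta$ and the inductive hypothesis exhaust all alternatives, forcing $\vartheta(\gamma)=\theta(\gamma)$. Iterating finitely many times exhausts the finite graph $C_\alpha\sqcup\widetilde{C_\alpha}$.

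The main obstacle is the base step. Unlike in the nondegenerate analogue \cite[Lem.~4.4]{FL1}, the contracted bracket $[\,,\,]_{\widetilde\p}$ vanishes on $\m\times\m$, so the bare condition $\beta+\delta\in S$ required of $\vartheta$ does not by itself imply $\delta\in S_\beta$: spurious partners in $\Delta^+\setminus\Delta^+_{\pi'}$ must be excluded by a careful combinatorial analysis of the Heisenberg sets exploiting $({\bf C})$, $({\bf C'})$, and the hypothesis on $\alpha$ propagated by Lemma \ref{proproots}(\ref{prop1roots}). Once this exclusion is established, the rest of the proof is the purely graph-theoretic exhaustion described above.
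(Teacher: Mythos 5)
Your strategy of propagating inward from the $O_1$-leaves along the chain, using bijectivity of $\vartheta$, matches the paper's proof exactly; the paper also begins at $\theta(\alpha^{n_0}),\theta(\alpha^{(n_1)})\in O_1$, runs a decreasing induction down the chain, then an increasing one, and treats $\widetilde{C_\alpha}$ last. You also correctly flag the subtle point that the stated hypothesis $\gamma+\vartheta(\gamma)\in S$ is, in principle, weaker than $\vartheta(\gamma)\in S_\gamma$, because the contracted bracket $[\,,\,]_{\widetilde\p}$ vanishes on $\m\times\m$.

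However, your base step contains a genuine gap: the exclusion of ``spurious partners'' is merely asserted (``a direct analysis\ldots\ should rule this out''), not carried out, and it is not something that follows formally from $({\bf C})$, $({\bf C'})$, or Lemma~\ref{proproots}(\ref{prop1roots}) as you suggest. In fact spurious pairs do exist in $O$: for instance, when $n>3s/2$, the roots $\beta=\ep_1+\ep_{3s/2+1}\in\Gamma^0_{\ep_1+\ep_{3s/2}}$ and $\delta=\ep_2-\ep_{3s/2+1}\in\Gamma^0_{\delta'_{(s-2)/2}}$ are both in $O\cap\bigl(\Delta^+\setminus\Delta^+_{\pi'}\bigr)$ and $\beta+\delta=\ep_1+\ep_2\in S$. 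So ruling these out under the literal weak hypothesis would require a nontrivial extra argument. The paper's own proof avoids this difficulty entirely by implicitly reading the hypothesis on $\vartheta$ in the stronger form $\vartheta(\gamma)\in S_\gamma$ for all $\gamma\in O$ (notice how it immediately lists the a priori images of $\theta(\alpha^{n_0-1})$ as the elements of $S_{\theta(\alpha^{n_0-1})}$, two or three roots). That stronger condition is precisely what holds for the permutations $\vartheta$ that actually arise in Prop.~\ref{condnondegen}, namely those coming from nonzero monomials of $\det\bigl(\widetilde\Phi_y\mid_{\g_O\times\g_O}\bigr)$: such a monomial forces $K\bigl(y,[x_\gamma,x_{\vartheta(\gamma)}]_{\widetilde\p}\bigr)\neq 0$, hence $\vartheta(\gamma)\in S_\gamma$. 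Under this reading your base step is immediate ($\beta\in O_1$ has the unique candidate $\theta(\beta)$), and your inductive propagation then works just as in the paper. So either invoke the stronger hypothesis (noting where it comes from) or else the ``direct analysis'' you invoke needs to be supplied and is not routine.
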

We give the proof below for the reader's convenience.
\begin{proof}
Denote by $n_0$, resp. $n_1$, the rank of the sequence $(\alpha^i)_{i\in\mathbb N}$, resp. $(\alpha^{(i)})_{i\in\mathbb N}$. Then necessarily we have that $\vartheta(\theta(\alpha^{n_0}))=\alpha^{n_0}$ and $\vartheta(\theta(\alpha^{(n_1)}))=\alpha^{(n_1)}$, since $\theta(\alpha^{n_0})\in O_1$ and $\theta(\alpha^{(n_1)})\in O_1$. Now consider the root $\beta=\theta(\alpha^{n_0-1})$ (if $n_0\ge 1$). Then the image of $\beta$  by $\vartheta$ can be a priori equal to three (if $\beta\in O_3$) resp. two (if $\beta\in O_2$) possible roots :
$\alpha^{n_0-1}$ or $\alpha^{n_0}$ or $\widetilde{\beta}\not\in\{\alpha^{n_0-1},\,\alpha^{n_0}\}$ (the last one only if $\beta\in O_3$). But since $\theta\bigl(\widetilde{\beta}\bigr)\in O_1$, we have necessarily that $\vartheta\bigl(\theta\bigl(\widetilde{\beta}\bigr)\bigr)=\widetilde{\beta}$. Since $\vartheta$ is a bijection, it follows that necessarily $\vartheta(\theta(\alpha^{n_0-1}))=\alpha^{n_0-1}$. A same argument holds for showing that $\vartheta(\theta(\alpha^{(n_1-1)}))=\alpha^{(n_1-1)}$ (if $n_1\ge 1$). By a decreasing induction on $k$ it follows that, for any $0\le k\le n_0$, we have $\vartheta(\theta(\alpha^k))=\alpha^k$ and that, for any $0\le k\le n_1$, we have $\vartheta(\theta(\alpha^{(k)}))=\alpha^{(k)}$. Taking $k=0$, it follows that $\vartheta(\alpha)=\vartheta(\theta(\alpha^{(0)}))=\alpha^{(0)}=\theta(\alpha)$ and $\vartheta(\alpha^{(0)})=\vartheta(\theta(\alpha))=\alpha=\theta(\alpha^{(0)})$. Then an increasing induction on $k$ gives that $\vartheta(\alpha^k)=\theta(\alpha^k)$  for any $0\le k\le n_0$ and that $\vartheta(\alpha^{(k)})=\theta(\alpha^{(k)})$ for any $0\le k\le n_1$. Finally let $\beta\in O_3\cap C_{\alpha}$. Then a priori, since $\widetilde\beta\in O_2$ by condition (C'), we have that $\vartheta\bigl(\widetilde\beta\bigr)=\theta\bigl(\widetilde\beta\bigr)$ or $\vartheta\bigl(\widetilde\beta\bigr)=\beta$. By what we have showed before we have that $\beta=\vartheta\bigl(\theta(\beta)\bigr)$. Since $\widetilde\beta\neq\theta(\beta)$, it follows that $\vartheta\bigl(\widetilde\beta\bigr)=\theta\bigl(\widetilde\beta\bigr)$. And of course we have that $\vartheta\bigl(\theta\bigl(\widetilde\beta\bigr)\bigr)=\widetilde\beta$ since $\theta\bigl(\widetilde\beta\bigr)\in O_1$. This completes the proof.
\end{proof}

\begin{lm}

Let $\alpha\in O$, for which there exists no root $\beta\in O_3$ such that $\alpha=\widetilde\beta$ or $\alpha=\theta\bigl(\widetilde\beta\bigr)$. Assume that $\alpha$ is a cyclic root. Let $\vartheta: O\longrightarrow O$ be a permutation such that, for all $\gamma\in O$, one has $\gamma+\vartheta(\gamma)\in S$. Then $\vartheta_{\mid\widetilde{C_{\alpha}}}=\theta_{\mid\widetilde{C_{\alpha}}}$.
\end{lm}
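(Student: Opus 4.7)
The plan is to pin down $\vartheta$ on $\widetilde{C_\alpha}$ one pair $\{\widetilde{\beta_i},\theta(\widetilde{\beta_i})\}$ at a time, exploiting that $\theta(\widetilde{\beta_i})\in O_1$ (by condition \textbf{(C$'$)}) provides a rigid ``endpoint'' directly inside $\widetilde{C_\alpha}$ itself. This way no descent along the chain $C_\alpha$ is required, which is essential since for a cyclic root $\alpha$ the chain $C_\alpha$ has no natural end from which to start such an induction, so the strategy of Lemma \ref{statlm} cannot be transplanted verbatim.

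For each fixed $\beta_i\in O_3\cap C_\alpha$ I would argue in two symmetric steps. First, because $\theta(\widetilde{\beta_i})\in O_1$, the set $S_{\theta(\widetilde{\beta_i})}$ is the singleton $\{\widetilde{\beta_i}\}$. Applying the hypothesis $\gamma+\vartheta(\gamma)\in S$ at $\gamma=\theta(\widetilde{\beta_i})$ immediately forces $\vartheta(\theta(\widetilde{\beta_i}))=\widetilde{\beta_i}=\theta(\theta(\widetilde{\beta_i}))$, since $\theta$ is an involution on $O$. Second, the bijectivity of $\vartheta$ lets me substitute $\gamma=\vartheta^{-1}(\delta)$ in the defining relation, yielding $\delta+\vartheta^{-1}(\delta)\in S$ for every $\delta\in O$; applied at $\delta=\theta(\widetilde{\beta_i})$ this gives $\vartheta^{-1}(\theta(\widetilde{\beta_i}))\in S_{\theta(\widetilde{\beta_i})}=\{\widetilde{\beta_i}\}$, hence $\vartheta(\widetilde{\beta_i})=\theta(\widetilde{\beta_i})$. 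Letting $\beta_i$ range over $O_3\cap C_\alpha$ then covers all of $\widetilde{C_\alpha}$ and finishes the argument.

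There is no serious obstacle here; the only point requiring care is the simultaneous use of $\vartheta$ and $\vartheta^{-1}$, which eliminates both degrees of freedom in each pair $\{\widetilde{\beta_i},\theta(\widetilde{\beta_i})\}$ in one stroke, bypassing the ambiguity that $\widetilde{\beta_i}\in O_2$ has two a priori candidates ($\theta(\widetilde{\beta_i})$ and $\beta_i$) as partners. The cyclic-root hypothesis on $\alpha$ is used only to delimit the claim to $\widetilde{C_\alpha}$: as already noted in the paper, one cannot expect $\vartheta=\theta$ on the chain $C_\alpha$ itself in the cyclic case, and indeed the restriction of $\widetilde\Phi_y$ to $\g_{C_\alpha}\times\g_{C_\alpha}$ may be degenerate in this setting.
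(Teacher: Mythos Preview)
Your proof is correct and takes a genuinely different, more economical route than the paper's. Both arguments establish $\vartheta(\theta(\widetilde{\beta_i}))=\widetilde{\beta_i}$ in the same way, using that $\theta(\widetilde{\beta_i})\in O_1$ has $\widetilde{\beta_i}$ as its unique partner. For the second half, however, the paper argues by contradiction: assuming $\vartheta(\widetilde\gamma)=\gamma$ (the only alternative to $\theta(\widetilde\gamma)$, since $\widetilde\gamma\in O_2$), it traces $\vartheta$ all the way around the cycle $C_\alpha$, using injectivity at each step, until it forces some element of $C_\alpha$ to equal $\widetilde\gamma$, contradicting Lemma~\ref{proproots}\,(\ref{prop1roots}). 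Your observation that $\vartheta^{-1}$ satisfies the same sum-in-$S$ hypothesis as $\vartheta$ lets you apply the $O_1$-endpoint argument a second time, directly yielding $\vartheta^{-1}(\theta(\widetilde{\beta_i}))=\widetilde{\beta_i}$ without ever touching the cycle. This is cleaner and treats each $\beta_i\in O_3\cap C_\alpha$ in isolation; the paper's approach, by contrast, shows explicitly how the constraint propagates along the cycle, which is closer in spirit to the proof of Lemma~\ref{statlm} and uses the cyclic-root hypothesis more substantively. One small remark: when you write $\vartheta^{-1}(\theta(\widetilde{\beta_i}))\in S_{\theta(\widetilde{\beta_i})}$ you are implicitly using that $\widetilde{\beta_i}$ is the \emph{only} root in $O$ whose sum with $\theta(\widetilde{\beta_i})$ lies in $S$ (not merely the only one with nonzero bracket); the paper states this explicitly in its own proof, so you are on the same footing.
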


\begin{proof}

Let $j\in\mathbb N$, $j>1$, such that $\alpha=\alpha^j=\theta(\alpha^{(j)})\neq\alpha^{j-1}$.
Let $\gamma\in C_{\alpha}\cap O_3$. Since $\theta(\widetilde\gamma)\in O_1$ necessarily we have that $\vartheta(\theta(\widetilde\gamma))=\widetilde\gamma$ since $\widetilde\gamma$ is the only root in $O$ such that $\widetilde\gamma+\theta(\widetilde\gamma)\in S$. Now for $\vartheta(\widetilde\gamma)$ there are two possibilities (since $\widetilde\gamma\in O_2$) either $\vartheta(\widetilde\gamma)=\theta(\widetilde\gamma)$ or $\vartheta(\widetilde\gamma)=\gamma$ because $\widetilde\gamma\in S_{\gamma}\setminus\{\theta(\gamma),\,\gamma^{(1)}\}$. Suppose that $\vartheta(\widetilde\gamma)=\gamma$ and that $\gamma=\alpha^{(i)}$ for some $i\in\mathbb N$, $0\le i\le j-1$. Then necessarily $\vartheta(\theta(\gamma))=\theta(\gamma)^{(1)}=\gamma^1=(\alpha^{(i)})^1=\alpha^{(i+1)}$ by \ref{eqRkseq} and \ref{eq2Rkseq} of Remark \ref{Rkseq}. An induction gives that necessarily $\vartheta(\theta(\alpha^{(j-1)}))=\alpha^{(j)}$. Then $\vartheta(\alpha)=\vartheta(\theta(\alpha^{(j)}))=\alpha^{(1)}$ and $\vartheta(\theta(\alpha^{(1)}))=\theta(\alpha^{(1)})^{(1)}=(\alpha^{(1)})^1=\alpha^{(2)}$. By induction we obtain that, if $i\ge 1$, $\vartheta(\theta(\alpha^{(i-1)}))=\alpha^{(i)}=\gamma$. This implies, since $\vartheta$ is injective, that $\theta(\alpha^{(i-1)})=\widetilde\gamma$ if $i\ge 1$ or that $\widetilde\gamma=\theta(\alpha^{(j-1)})$ if $i=0$. This contradicts \ref{prop1roots} of Lemma \ref{proproots}. Similar arguments hold for the other cases that is, when $\gamma=\theta(\alpha^{(i)})$ or $\gamma=\alpha^i$ or $\gamma=\theta(\alpha^i)$.
\end{proof}

\subsection{A lemma of nondegeneracy.}\label{Nondegen}

Now we can give the following Proposition, which gives a sufficient condition for the nondegeneracy of the restriction to $\g_O\times\g_O$ of the skew-symmetric bilinear form $\widetilde\Phi_y$.

\begin{prop}\label{condnondegen}
We assume that :
\begin{enumerate}
\item $S_{\mid\h_{\pi'}\oplus\h_{\Lambda}}$ is a basis for $(\h_{\pi'}\oplus\h_{\Lambda})^*$.
\item If $\alpha\in O^+$ then $S_{\alpha}\cap O^+=\{\theta(\alpha)\}$.
\item If $\alpha\in O^-$ then $S_{\alpha}\cap O^-=\{\theta(\alpha)\}$.
\item If $\alpha\in O^m$ then $\alpha$ is a stationary root.
\end{enumerate}
Let $y=\sum_{\gamma\in S}x_{-\gamma}$ and $\widetilde\Phi_y$ be the skew-symmetric bilinear form defined by $\widetilde\Phi_y(x,\,x')=K(y,\,[x,\,x']_{\widetilde\p})$ for all $x,\,x'\in\widetilde\p$. Then the restriction to $\g_O\times\g_O$ of  $\widetilde\Phi_y$ is nondegenerate.
\end{prop}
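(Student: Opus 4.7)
The plan is to prove nondegeneracy by showing that the kernel of $\widetilde\Phi_y\vert_{\g_O\times\g_O}$ is zero. Suppose $v=\sum_{\alpha\in O}c_\alpha x_\alpha$ lies in this kernel; pairing $v$ against $x_\beta$ for each $\beta\in O$ yields
$$\sum_{\alpha\in S_\beta}c_\alpha\,K\bigl(y,[x_\alpha,x_\beta]_{\widetilde\p}\bigr)=0,\qquad(\ast)$$
in which every coefficient is nonzero by the definition of $S_\beta$, and $\lvert S_\beta\rvert\le 3$ thanks to condition (C'). The strategy is to propagate the vanishing of the $c_\alpha$'s through the graph on $O$ whose edges link $\beta$ with the elements of $S_\beta$, starting from a natural base set and working outwards.

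For the base step, applying $(\ast)$ to any $\beta\in O_1$ gives $c_{\theta(\beta)}=0$ at once. Combined with condition (C') --- which provides for every $\gamma\in O_3$ a wing $\widetilde\gamma\in O_2$ with $\theta(\widetilde\gamma)\in O_1$ --- this also yields $c_{\widetilde\gamma}=0$ for every $\gamma\in O_3$. I then handle the stationary chains inside $O^m$: fix $\alpha\in O^m$, which is stationary by hypothesis~(4); the sequence $(\alpha^i)_i$ stabilizes at some rank $n_0$ with $\theta(\alpha^{n_0})\in O_1$, whence $c_{\alpha^{n_0}}=0$ by the base step, and a decreasing induction on $i$ exploiting
$$S_{\theta(\alpha^{i-1})}\subseteq\{\alpha^{i-1},\alpha^i,\widetilde{\theta(\alpha^{i-1})}\}$$
forces $c_{\alpha^{i-1}}=0$ by applying $(\ast)$ to $\beta=\theta(\alpha^{i-1})$, since the other coefficients have already been set to zero. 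The symmetric argument on $(\alpha^{(i)})_i$ and on the wings of any $O_3$-members of the chain yields $c_\gamma=0$ for every $\gamma\in C_\alpha\sqcup\widetilde{C_\alpha}$. Because every root of $O^m$ is stationary, this step gives $c_\gamma=0$ for every $\gamma\in O^m$ and for every $\gamma$ belonging to the chain of some stationary root.

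The final step treats the residual roots in $(O^+\cup O^-)\setminus(\text{chains})$. For $\alpha\in O^+$, applying $(\ast)$ to $\beta=\theta(\alpha)$ and invoking condition~(2) leaves only contributors lying in $O^m\cup O^-$; the $O^m$-contributions vanish by the previous paragraph and, if $\theta(\alpha)\in O_3$, so does the wing contribution by the base step. The symmetric statement using condition~(3) holds for $\alpha\in O^-$. The remaining homogeneous system is governed by a skew-symmetric matrix $M'$ with manifestly nondegenerate diagonal $\theta$-pairing blocks on each of $O^+$ and $O^-$, together with a single off-diagonal coupling block $A$ produced by those pairs $(\alpha,\gamma)\in O^+\times O^-$ with $\alpha+\gamma\in S$. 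The main obstacle is to control $A$ and show $M'$ is still nondegenerate; my plan is to apply an analogue of Lemma~\ref{statlm} to the involutions of $(O^+\cup O^-)_{\mathrm{residual}}$ compatible with the sum-in-$S$ property, reducing any such involution to $\theta$ by tracing a cross-chain $\alpha\mapsto\theta(\alpha)\mapsto\gamma\mapsto\theta(\gamma)\mapsto\cdots$ to its termination in an $O_1$-element (the cyclic case being ruled out by a graph-theoretic argument modeled on the proof of Lemma~\ref{proproots}), so that the Pfaffian of $\widetilde\Phi_y\vert_{\g_O\times\g_O}$ reduces to the single nonzero term $\prod_{\{\alpha,\theta(\alpha)\}}K\bigl(y,[x_\alpha,x_{\theta(\alpha)}]_{\widetilde\p}\bigr)$.
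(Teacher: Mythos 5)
Your approach --- showing that the kernel of $\widetilde\Phi_y\vert_{\g_O\times\g_O}$ is trivial by propagating vanishing of coefficients through the graph on $O$ --- is genuinely different from the paper's. The paper deforms $y$ into $z(t)=\sum_{\gamma\in S}t^{\lvert\rho(\gamma)\rvert}x_{-\gamma}$, uses hypothesis~(1) together with the adjoint action of the torus of $\h_{\pi'}\oplus\h_{\Lambda}$ to force $d(t)=\det\bigl(\widetilde\Phi_{z(t)}\vert_{\g_O\times\g_O}\bigr)$ to be a monomial in $t$, and then identifies the unique top-degree term (coming from the pairing $\theta$, via Lemma~\ref{statlm} for $O^m$ and hypotheses (2)--(3) for $O^\pm$) as nonzero. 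Your chain-propagation for the $O^m$ part is a legitimate alternative route and, once one runs the two waves (from $\theta(\alpha^{n_0})$ inward and from $\theta(\alpha^{(n_1)})$ inward, so as to also kill $c_{\theta(\alpha^i)}$ and $c_{\theta(\alpha^{(i)})}$), it does yield $c_\gamma=0$ on $C_\alpha\sqcup\widetilde{C_\alpha}$ for each stationary $\alpha$.

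The gap is in the final step. To conclude, you need the residual block $M'$ on $(O^+\cup O^-)\setminus(\text{chains})$ to be nonsingular, and your plan is to ``reduce any such involution to $\theta$'' by ruling out cyclic residual roots via an argument ``modeled on the proof of Lemma~\ref{proproots}.'' This cannot be carried out with the stated hypotheses: Lemma~\ref{proproots} does not exclude cyclic roots, it merely classifies roots as stationary or cyclic, and hypothesis~(4) only forces stationarity on $O^m$. A residual root in $O^+\cup O^-$ can in principle lie on a cycle alternating $O^+,O^+,O^-,O^-,\dots$, and for such a cycle the Pfaffian has two competing matchings whose weights could a priori cancel. The paper itself warns, in the Remark following Lemma~\ref{statlm}, that for a cyclic root $\alpha$ the restriction of $\widetilde\Phi_y$ to $\g_{C_\alpha}\times\g_{C_\alpha}$ ``need not be nondegenerate.'' What saves the paper is precisely the $t$-deformation: since $\lvert\rho(\alpha+\beta)\rvert<\lvert\rho(\alpha)\rvert+\lvert\rho(\beta)\rvert$ whenever $\alpha\in O^+$, $\beta\in O^-$, every permutation involving a cross-pairing contributes a strictly lower power of $t$ than the $\theta$-term, and the constraint that $d(t)$ be a single power of $t$ then \emph{forces} all those cross contributions to cancel without one ever having to prove by hand that the cross block is benign. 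Your direct computation bypasses this mechanism entirely, so the asserted reduction of the Pfaffian to the single $\theta$-product is unjustified. To repair the proof along your lines you would either have to re-import the deformation/torus argument at this last stage, or establish a new combinatorial lemma (not available in the paper) showing that under (1)--(4) no alternating $O^+/O^-$ cycle can occur among residual roots.
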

The proof is similar as in \cite[Lem. 6.1]{FL1} (see also \cite[8.4,\,8.5]{J5}). We give it for the reader's convenience.
\begin{proof}
Let $\rho$ be the linear form on $\h^*$ defined by $\rho(\alpha)=1$ for all $\alpha\in\pi$ and set $\rho(A)=\sum_{\alpha\in A}\rho(\alpha)$ for every subset $A$ of roots. We set $z(t)=\sum_{\gamma\in S}t^{\lvert\rho(\gamma)\rvert}x_{-\gamma}$ for all $t\in\Bbbk$ and $d(t)=\det(\widetilde\Phi_{{z(t)}_{\mid\g_O\times\g_O}})$, which is a polynomial in the variable $t$. Denote by $H$ the adjoint group of $\h_{\pi'}\oplus\h_{\Lambda}$. Fix $t_0\in\Bbbk$. By hypothesis (1) $z(ct_0)$ and $z(t_0)$ are in the same $H$-coadjoint orbit for all $c\in\Bbbk\setminus\{0\}$. Moreover $\g_O\times\g_O$ is stable under the adjoint action of $H$. Then $d(t_0)=0$ is equivalent to $d(ct_0)=0$ for all $c\in\Bbbk\setminus\{0\}$. It follows that either $d(t)$ is identically zero or it vanishes only at $t=0$. Then $d(t)$ is a multiple of a single power of $t$ (see also \cite[Rem. 8.4]{J5}).
Choose a basis of $\g_O$ formed by root vectors and write the determinant $d(t)$ in this basis. Then the term 
\begin{align*}\prod_{\alpha\in O^m}t^{\lvert\rho(\alpha+\theta(\alpha))\rvert}\prod_{\alpha\in O^+}t^{\lvert\rho(\alpha+\theta(\alpha))\rvert}\prod_{\alpha\in O^-}t^{\lvert\rho(\alpha+\theta(\alpha))\rvert}\\
=t^{\sum_{\alpha\in O^m}\lvert\rho(\alpha+\theta(\alpha))\rvert+\rho(O^+)-\rho(O^-)}\end{align*} appears in $d(t)$ (up to a nonzero scalar). 
Indeed this comes from  Lemma \ref{statlm} for the elements in $O^m$, since the only factor in $d(t)$ involving a root $\alpha\in O^m$, and then also $\theta(\alpha)\in O^m$, is $t^{2\lvert\rho(\alpha+\theta(\alpha))\rvert}$ up to a nonzero scalar. It is also true for a root $\alpha\in O^+$ or $\alpha\in O^-$ by hypotheses (2) and (3). Indeed if $\alpha\in O^\pm$ is such that there exists $\beta\in O^m\cap S_{\alpha}$ with $t^{\lvert\rho(\alpha+\beta))\rvert}$ appearing as a factor in $d(t)$, then this contradicts Lemma \ref{statlm} in view of hypothesis (4). 

Now if there exist roots $\alpha\in O^\pm$ such that $S_{\alpha}\cap O^\mp\neq\emptyset$
then such roots provide polynomials in $t$ of degree strictly smaller than $\rho(O^+)-\rho(O^-)+\sum_{\alpha\in O^m}\lvert\rho(\alpha+\theta(\alpha))\rvert$. This comes from the fact that if $\alpha\in O^+$ and $\beta\in O^-$ then one has that $\lvert\rho(\alpha+\beta)\rvert<\lvert\rho(\alpha)\rvert+\lvert\rho(\beta)\rvert$ while $\lvert\rho(\alpha+\theta(\alpha))\rvert=\lvert\rho(\alpha)\rvert+\lvert\rho(\theta(\alpha))\rvert$
and $\lvert\rho(\beta+\theta(\beta))\rvert=\lvert\rho(\beta)\rvert+\lvert\rho(\theta(\beta))\rvert$. Since $d(t)$ is a multiple of a single power of $t$,
the latter polynomials must vanish. Then the above term is the only one (up to a nonzero scalar) in $d(t)$ and then $d(t)\neq 0$.\end{proof}

\section{Weierstrass sections for In\"on\"u-Wigner contractions of even maximal parabolic subalgebras in type B.}

In this section, we will assume that $\g$ is simple of type ${\rm B}_n$ and that the standard parabolic subalgebra $\p$ of $\g$ is maximal namely, that it is associated with the subset $\pi'$ of simple roots $\pi$ such that $\pi'=\pi\setminus\{\alpha_s\}$ for $1\le s\le n$ (Bourbaki's notation \cite[Planche II]{BOU}). Moreover we will assume that $s$ is even.
We will say in this case that such a maximal parabolic subalgebra $\p$ is {\it even}. 

Recall Remark \ref{truncrq} that the canonical truncation $\p_{\Lambda}$ of $\p$ is equal to the derived subalgebra $\p'$ of $\p$ and then that the canonical truncation $\widetilde\p_{\Lambda}$ of $\widetilde\p$ is equal to the derived subalgebra $\widetilde\p'$ of $\widetilde\p$ by Corollary \ref{trunccor}. We will construct an element $y\in\widetilde\p'^*$ and a vector subspace $V$ of $\widetilde\p'^*$  verifying hypotheses $i),\,ii),\,iii),\,iv)$ of Lemma \ref{indexcontractionlm} and of Proposition \ref{WScontractionprop}.
 
We denote by $\ep_i$, for all $1\le i\le n$, the elements of an orthonormal basis with respect to the inner product $(\,,\,)$ in $\mathbb Q^n$ from which the root system $\Delta$ of $\g$ simple of type ${\rm B}_n$ is defined. For any real number $x$, denote by $[x]$ the unique integer such that $[x]\le x<[x]+1$.

\subsection{The set $S$}\label{SetS}

\begin{enumerate}
\item Suppose that $n>s$ and that $3s/2\le n$.

For the set $S^m$ we set :
\begin{align*}S^m=&\{\ep_s,\;\ep_{s-(2k-1)}+\ep_{s+k},\;\ep_{s-2k}-\ep_{s+k};\;1\le k\le (s-2)/2\}
\end{align*}

For the sets $S^+$ and $S^-$ we set :
\begin{enumerate}

\item If $s/2$ is even 
\begin{align*}S^+=\{\ep_1+\ep_{3s/2},\;\ep_{2i-1}+\ep_{2i};\;1\le i\le s/2-1,\\
\ep_{s+2j+1}+\ep_{s+2j+2};\;s/4\le j\le[(n-2-s)/2]\}\\
S^-=\{-\ep_{s+2j}-\ep_{s+2j+1};\;s/4\le j\le[(n-1-s)/2]\}.\end{align*}

\item If $s/2$ is odd
\begin{align*}S^+=\{\ep_1+\ep_{3s/2},\;\ep_{2i-1}+\ep_{2i};\;1\le i\le s/2-1,\\
\ep_{s+2j+2}+\ep_{s+2j+3};\;(s-2)/4\le j\le[(n-3-s)/2]\}\\
S^-=\{-\ep_{s+2j+1}-\ep_{s+2j+2};\;(s-2)/4\le j\le[(n-2-s)/2]\}.\end{align*}
\end{enumerate}

\item Suppose that $n>s$ and that $3s/2>n$.

For the set $S^m$ we set:
\begin{align*}S^m=&\{\ep_s,\;\ep_{s-(2k-1)}+\ep_{s+k},\;\ep_{s-2k}-\ep_{s+k};\;1\le k\le n-s\}.
\end{align*}

For the sets $S^+$ and $S^-$ we set:
\begin{align*} S^+=\{\ep_{2i-1}+\ep_{2i};\;1\le i\le s/2-1\}\\
S^-=\{\ep_{3s-2n-k}-\ep_k;\;1\le k\le 3s/2-n-1\}.\end{align*}

\item Suppose that $n=s$.

For the sets $S^m,\,S^+$ and $S^-$ we set:
\begin{align*} S^m=\{\ep_s\}\\
S^+=\{\ep_{2i-1}+\ep_{2i};\;1\le i\le s/2-1\}\\
S^-=\{\ep_{s-k}-\ep_k;\;1\le k\le s/2-1\}.\end{align*}

Observe that, for $n=s$, these sets coincide with the sets constructed in \cite[Sec. 7]{FL1}.
\end{enumerate}

\begin{lm}\label{basisB}
Let $S=S^m\cup S^+\cup S^-$. Then $S_{\mid \h_{\pi'}}$ is a basis for $\h_{\pi'}^*$.
\end{lm}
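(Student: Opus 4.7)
The plan is to reduce the claim to a linear independence statement inside the full Cartan dual $\h^*$, and then verify it by a case-by-case pivoting argument on the coordinates $\ep_i$.

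First I would count $|S^m| + |S^+| + |S^-|$ in each of the cases listed in Subsection \ref{SetS}. For instance in Case (2) one obtains $|S| = (2(n-s)+1) + (s/2-1) + (3s/2 - n - 1) = n - 1$, and similar elementary arithmetic in Cases (1) and (3) (including the two subcases on the parity of $s/2$) yields the same count. Since $\dim \h_{\pi'} = |\pi'| = n - 1$, it then suffices to show that the $n-1$ restrictions $\gamma_{\mid\h_{\pi'}}$, for $\gamma \in S$, are linearly independent in $\h_{\pi'}^*$. Next, using that for $\beta = \sum_j \beta_j \ep_j \in \h^*$ one has $\langle \alpha_i^\vee, \beta\rangle = \beta_i - \beta_{i+1}$ when $i < n$ and $\langle \alpha_n^\vee, \beta\rangle = 2\beta_n$, the kernel of the restriction $\h^* \to \h_{\pi'}^*$ is seen to be one-dimensional and spanned by the fundamental weight $\varpi_{\alpha_s} = \ep_1 + \cdots + \ep_s$. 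Hence the lemma is equivalent to the statement that $S \cup \{\varpi_{\alpha_s}\}$ is a basis of $\h^*$.

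To prove this basis property, I would extract from a hypothetical relation $\sum_{\gamma \in S} c_\gamma\, \gamma = c\, \varpi_{\alpha_s}$ the coefficient of each $\ep_i$ and pivot in decreasing order of $i$. For $i > s$ the right-hand side contributes zero, and the elements of $S$ involving $\ep_i$ come from $S^m$ together with pairs from $S^+$ and $S^-$ with indices $\ge s+1$: in Case (2) these are the pairs $\ep_{s-(2k-1)}+\ep_{s+k},\, \ep_{s-2k}-\ep_{s+k}$ of $S^m$ for $1 \le k \le n-s$, each carrying a unique top index $s+k$, so that pivoting forces all their coefficients to vanish; in Case (1) one additionally uses the pairs $\ep_{s+2j+1}+\ep_{s+2j+2} \in S^+$ and $-\ep_{s+2j}-\ep_{s+2j+1} \in S^-$ (shifted by one in subcase (b)) to handle indices from $3s/2$ up to $n$. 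The $\ep_s$-coefficient then forces $c = 0$. The remaining relations are supported on $\ep_1, \ldots, \ep_{s-1}$ and involve the pairs $\ep_{2i-1}+\ep_{2i}$ of $S^+$ together with the $S^-$-elements (in Cases (2) and (3)) or the anchor $\ep_1 + \ep_{3s/2}$ of $S^+$ (in Case (1)), which under a suitable ordering form a triangular system. Case (3) coincides with the setting already treated in \cite[Sec. 7]{FL1}.

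The main obstacle is the bookkeeping in Case (1), where the parity of $s/2$ dictates how the $S^+$ and $S^-$ pairs interlock across the seam at index $3s/2$: one must check that in each subcase every index in $\{s+1, \ldots, n\}$ is covered by exactly one pivot without leaving an unpaired element, and that the anchor $\ep_1 + \ep_{3s/2}$ properly ties the large-index subsystem to the residual relations on indices $\le s-1$. Once this triangular structure is verified in each case, the linear independence of $S \cup \{\varpi_{\alpha_s}\}$, and hence the lemma, follows.
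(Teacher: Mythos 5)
Your overall strategy -- count $|S|=n-1$, reformulate linear independence of $S_{\mid\h_{\pi'}}$ as linear independence of $S\cup\{\varpi_{\alpha_s}\}$ in $\h^*$, then row-reduce on the $\ep_i$-coordinates -- is a sound reformulation and is close in spirit to the paper's argument, which instead picks an explicit interleaved ordering of $S$ and of the coroot basis $\{\alpha^\vee\}_{\alpha\in\pi'}$ so that the matrix $(t_i(h_j))$ is lower triangular with nonzero diagonal. However, the crucial elimination step as you describe it is incorrect, in two places.

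First, for the $S^m$ pairs $\delta_k=\ep_{s-(2k-1)}+\ep_{s+k}$ and $\delta'_k=\ep_{s-2k}-\ep_{s+k}$, you claim that pivoting on the coefficient of $\ep_{s+k}$ ``forces all their coefficients to vanish.'' It does not: since $\ep_{s+k}$ appears in both $\delta_k$ and $\delta'_k$ (with opposite signs), the $\ep_{s+k}$-coordinate equation is $c_{\delta_k}-c_{\delta'_k}=0$, which only identifies the two coefficients, not annihilates them. Second, you claim ``the $\ep_s$-coefficient then forces $c=0$''; since $\ep_s\in S^m$ is the unique element of $S$ involving $\ep_s$, the $\ep_s$-coordinate equation is $c_{\ep_s}=c$, which does not give $c=0$ by itself. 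Both errors are visible already in the smallest instance of Case~(2), $n=5$, $s=4$, where $S=\{\ep_4,\;\ep_3+\ep_5,\;\ep_2-\ep_5,\;\ep_1+\ep_2\}$ and $\varpi_{\alpha_4}=\ep_1+\ep_2+\ep_3+\ep_4$: the $\ep_5$-equation gives $c_{\ep_3+\ep_5}=c_{\ep_2-\ep_5}$ and the $\ep_4$-equation gives $c_{\ep_4}=c$, and only the combination of the $\ep_1,\ep_2,\ep_3,\ep_5$ equations forces $c=0$ and then all coefficients to vanish. In other words, the high-index coordinates do not by themselves yield a triangular system; one has to interleave them with the low-index coordinates $\ep_{s-(2k-1)},\ep_{s-2k}$ of the $S^m$ pairs (this interleaving is exactly what the paper's ordering of the coroots $\alpha^\vee_{s-1},\alpha^\vee_{s+k},\alpha^\vee_{s-(2k+1)},\ldots$ encodes). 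To repair the argument you would need to pivot alternately between indices $>s$ and indices $<s$, working inward from both ends of the pair $\{\delta_k,\delta'_k\}$, rather than sweeping once in decreasing order of $i$.
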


\begin{proof}

The proof was already done in \cite[Proof of Lem. 7.1]{FL1} for the case $n=s$. 

Suppose that $n>s$ and that $3s/2\le n$. We will make the proof for the case $s/2$ even. The case $s/2$ odd is very similar. Firstly we observe that $\lvert S\rvert=n-1=\dim\h_{\pi'}$.

Order the elements in $S=\{t_i\}_{1\le i\le n-1}$ so that the $s/2-1$ first elements are the $t_i=\ep_{2i-1}+\ep_{2i}$ for $1\le i\le s/2-1$. Then set $t_{s/2}=\ep_s$. The $s-2$ following elements in $S$ are the $\ep_{s-(2k-1)}+\ep_{s+k}$ and $\ep_{s-2k}-\ep_{s+k}$ for $1\le k\le (s-2)/2$, namely we set
$$t_{s/2+2k-1}=\ep_{s-(2k-1)}+\ep_{s+k},\;t_{s/2+2k}=\ep_{s-2k}-\ep_{s+k}$$ for $1\le k\le (s-2)/2$.
Then set $t_{3s/2-1}=\ep_1+\ep_{3s/2}$.

Finally for all $0\le k\le [(n-2-s)/2]-s/4$ set $$t_{3s/2+2k}=-\ep_{3s/2+2k}-\ep_{3s/2+2k+1},\;t_{3s/2+2k+1}=\ep_{3s/2+2k+1}+\ep_{3s/2+2k+2}.$$

Observe that if $n$ is even then $t_{n-1}=\ep_{n-1}+\ep_n$ is the last element of this list and if $n$ is odd the last element of this list is $t_{n-2}=\ep_{n-2}+\ep_{n-1}$.
Then if $n$ is odd, we set
$$t_{n-1}=-\ep_{n-1}-\ep_n.$$

Now we choose a basis $\{h_j\}_{1\le j\le n-1}$ in $\h_{\pi'}$ ordered as follows:

\begin{align*}\{\alpha^\vee_{2i};1\le i\le s/2-1,\;\alpha^\vee_{s-1},\;\alpha^\vee_{s+k},\,\alpha^\vee_{s-(2k+1)};\;1\le k\le s/2-1,\\
\;\alpha^\vee_{3s/2+j};\;0\le j\le n-3s/2\}\end{align*}

It is easily seen that the matrix $(t_i(h_j))_{1\le i,\,j\le n-1}$ is a lower triangular matrix with $1$ or $-1$ on the diagonal, except for the last one which is equal to $\pm 2$. 
Then $\det(t_i(h_j))_{1\le i,\,j\le n-1}\neq 0$ and we are done in this case.

Suppose that $n>s$ and that $3s/2>n$.

Order the elements $t_i$ of $S$ similarly as in the previous case that is, set
\begin{align*} t_i=\ep_{2i-1}+\ep_{2i};\; 1\le i\le s/2-1,\;t_{s/2}=\ep_s\\
t_{s/2+2k-1}=\ep_{s-(2k-1)}+\ep_{s+k},\;t_{s/2+2k}=\ep_{s-2k}-\ep_{s+k};\;1\le k\le n-s\\
t_{2n-3s/2+j}=\ep_{3s-2n-j}-\ep_j;\;1\le j\le 3s/2-n-1\end{align*}

Finally choose a basis $\{h_j\}_{1\le j\le n-1}$ in $\h_{\pi'}$ ordered as follows:

\begin{align*}\{\alpha^\vee_{2i};1\le i\le s/2-1,\;\alpha^\vee_{s-1},\;\alpha^\vee_{s+k},\,\alpha^\vee_{s-(2k+1)};\;1\le k\le n-s,\\
\alpha^\vee_{2j-1},\,\alpha^\vee_{3s-2n-(2j+1)};\;1\le j\le [(3s-2n)/4]\}\end{align*}

Then one checks easily that the matrix $(t_i(h_j))_{1\le i,\,j\le n-1}$ is a lower triangular matrix with $1$ or $-1$ on the diagonal. 

This implies that $\det(t_i(h_j))_{1\le i,\,j\le n-1}\neq 0$, which completes the proof.
\end{proof}

\subsection{The Heisenberg sets}\label{HS}

For $\gamma\in S$, recall that we set $\Gamma_{\gamma}^0=\Gamma_{\gamma}\setminus\{\gamma\}$, with $\Gamma_{\gamma}$ a Heisenberg set with centre $\gamma$.
We want to construct disjoint Heisenberg sets $\Gamma_{\gamma}$ with centre $\gamma\in S$, verifying {\bf Condition (C)} (Eq. \ref{ConditionC}).


The root system $\Delta_{\pi'}$ of $(\mathfrak r',\,\h_{\pi'})$ is spanned by two irreducible components of the set of simple roots $\pi'$. More precisely $\pi'=\pi'_1\sqcup\pi'_2$ with $\pi'_1=\{\alpha_1,\,\ldots,\,\alpha_{s-1}\}$ and $\pi'_2=\{\alpha_{s+1},\,\ldots,\,\alpha_n\}$. The set $\pi'_1$ spans a root system of type ${\rm A}_{s-1}$ (denote it by $\Delta_{\pi'_1}$)  and the set $\pi'_2$ spans a root system of type ${\rm B}_{n-s}$ (denote it by $\Delta_{\pi'_2}$). Similarly we set $\Delta^\pm_{\pi'_i}=\Delta_{\pi'_i}\cap\Delta^\pm$ for $i=1$ or $i=2$.

By the definitions given in subsection \ref{Nondegen}, we have that $\Gamma^+\subset\Delta^+$, $\Gamma^-\subset\Delta^-_{\pi'}$ and $\Gamma^m$ contains both positive and negative roots.

For $n=s$, the construction of $\Gamma$ is the same as in \cite[Sec. 7]{FL1} since all the Heisenberg sets built there satisfy {\bf Condition (C)} (Eq. \ref{ConditionC}) and give an adapted pair as required  in Lemma \ref{Adaplm}.We send the reader to \cite{FL1} for more details for this case.

From now on, suppose that $n>s$.
\begin{enumerate}
\item Let $1\le i\le s/2-1$. We explain below how to construct a Heisenberg set with centre $t_i=\ep_{2i-1}+\ep_{2i}$, which satisfies {\bf Condition (C)} (Eq. \ref{ConditionC}).
 Observe that the $t_i=\ep_{2i-1}+\ep_{2i}$, $1\le i\le s/2-1$,  correspond to the $s/2-1$ first strongly orthogonal positive roots which form the Kostant cascade for $\g$ (for more details, see for instance \cite[Sec. 7]{FL1}). Then denote by $H_i$ the maximal Heisenberg set with centre $t_i$ which is included in $\Delta^+$ (see for instance \cite[Example 3.1]{FL1} or \cite[2.2]{J1}). In other words one has that $$H_i=\{\ep_{2i-1}+\ep_{2i},\;\ep_{2i-1}\pm\ep_{j};\;\ep_{2i}\mp\ep_j;\;2i+1\le j\le n\}\subset\Delta^+.$$ 

Then the involution $\theta$ sends every $\ep_{2i-1}\pm\ep_{j}$ to $\ep_{2i}\mp\ep_j$, for all $2i+1\le j\le n$. Moreover 
\begin{align*}\ep_{2i-1}+\ep_{2i}\in\Delta^+\setminus\Delta^+_{\pi'}\\
\forall 2i+1\le j\le n,\;\ep_{2i-1}+\ep_{j},\;\ep_{2i}+\ep_j\in\Delta^+\setminus\Delta^+_{\pi'}\end{align*} and 
\begin{align*}\theta(\ep_{2i-1}+\ep_j)=\ep_{2i}-\ep_j\in\Delta^+_{\pi'_1}\iff 2i+1\le j\le s\\
\theta(\ep_{2i}+\ep_j)=\ep_{2i-1}-\ep_j\in\Delta^+_{\pi'_1}\iff 2i+1\le j\le s.\end{align*}

We set: 
$$\Gamma_{t_i}=\{\ep_{2i-1}+\ep_{2i},\;\ep_{2i-1}\pm\ep_{j};\;\ep_{2i}\mp\ep_j;\;2i+1\le j\le s\}\subset H_i$$
so that it is a Heisenberg set with centre $t_i=\ep_{2i-1}+\ep_{2i}$ which satisfies {\bf Condition (C)} (Eq. \ref{ConditionC}) and $\Gamma_{t_i}\subset\Gamma^+$.

\item Assume that $3s/2\le n$. We set:
\begin{align*} \Gamma_{\ep_1+\ep_{3s/2}}=\{\ep_1+\ep_{3s/2},\\
\;\ep_1\pm\ep_{3s/2+k},\;\ep_{3s/2}\mp\ep_{3s/2+k};\;1\le k\le n-3s/2\}.\end{align*}
Since $\ep_{3s/2}\mp\ep_{3s/2+k}=\theta(\ep_1\pm\ep_{3s/2+k})\in\Delta^+_{\pi'_2}$ for all $1\le k\le n-3s/2$, $\Gamma_{\ep_1+\ep_{3s/2}}$ is a Heisenberg set with centre $\ep_1+\ep_{3s/2}$, which satisfies {\bf Condition (C)} (Eq. \ref{ConditionC}). Moreover $\Gamma_{\ep_1+\ep_{3s/2}}\subset\Gamma^+$.

\item Assume that $3s/2\le n$ and $s/2$ is even. We define a Heisenberg set with centre $\gamma_j=\ep_{s+2j+1}+\ep_{s+2j+2}$ for all $s/4\le j\le[(n-2-s)/2]$ and a Heisenberg set with centre $\gamma'_j=-\ep_{s+2j}-\ep_{s+2j+1}$ for all $s/4\le j\le[(n-1-s)/2]$, by setting:
\begin{align*}\Gamma_{\gamma_j}=\{\gamma_j,\;\ep_{s+2j+1}\pm\ep_{k};\;\ep_{s+2j+2}\mp\ep_k;\;s+2j+3\le k\le n\}\subset\Delta^+_{\pi'_2}\\
\Gamma_{\gamma'_j}=\{\gamma'_j,\;-\ep_{s+2j}\pm\ep_{k};\;-\ep_{s+2j+1}\mp\ep_k;\;s+2j+2\le k\le n\}\subset\Delta^-_{\pi'_2}.\end{align*}
Then obviously $\Gamma_{\gamma_j}$ and $\Gamma_{\gamma'_j}$ satisfy {\bf Condition (C)} (Eq. \ref{ConditionC}). Moreover $\Gamma_{\gamma_j}\subset\Gamma^+$ and  $\Gamma_{\gamma'_j}\subset\Gamma^-$.

\item Assume that $3s/2\le n$ and $s/2$ is odd.  We define a Heisenberg set with centre $\gamma_j=\ep_{s+2j+2}+\ep_{s+2j+3}\in\Delta^+_{\pi'_2}$, for all $(s-2)/4\le j\le [(n-3-s)/2]$, resp. a Heisenberg set with centre $\gamma'_j=-\ep_{s+2j+1}-\ep_{s+2j+2}\in\Delta^-_{\pi'_2}$ for all $(s-2)/4\le j\le[(n-2-s)/2]$, by setting:
\begin{align*}\Gamma_{\gamma_j}=\{\gamma_j,\;\ep_{s+2j+2}\pm\ep_{k};\;\ep_{s+2j+3}\mp\ep_k;\;s+2j+4\le k\le n\}\subset\Delta^+_{\pi'_2}\\
\Gamma_{\gamma'_j}=\{\gamma'_j,\;-\ep_{s+2j+1}\pm\ep_{k};\;-\ep_{s+2j+2}\mp\ep_k;\;s+2j+3\le k\le n\}\subset\Delta^-_{\pi'_2}.\end{align*}
They satisfy {\bf Condition (C)} (Eq. \ref{ConditionC}). Moreover $\Gamma_{\gamma_j}\subset\Gamma^+$ and  $\Gamma_{\gamma'_j}\subset\Gamma^-$.

\item Assume that $3s/2>n$. Let $\gamma_j=\ep_{3s-2n-j}-\ep_j$ for $1\le j\le 3s/2-n-1$. Observe that $\gamma_j\in\Delta^-_{\pi'_1}$. Then we set: 
\begin{equation*}\Gamma_{\gamma_j}=\{\gamma_j,\;\ep_{k}-\ep_j,\,\ep_{3s-2n-j}-\ep_k;\;j+1\le k\le 3s-2n-j-1\}\subset\Delta^-_{\pi'_1}\end{equation*}
Obviously $\Gamma_{\gamma_j}$ is a Heisenberg set with centre $\gamma_j$ and it satisfies {\bf Condition (C)} (Eq. \ref{ConditionC}). One has that $\Gamma_{\gamma_j}\subset\Gamma^-$.

\item Assume that $3s/2\le n$ or $3s/2>n$. Set for all $1\le k\le \min((s-2)/2,\,n-s)$, $\delta_k=\ep_{s-(2k-1)}+\ep_{s+k}$ and $\delta'_k=\ep_{s-2k}-\ep_{s+k}$. We will explain how to construct a Heisenberg set with centre $\delta_k$, resp. $\delta'_k$, which satisfies {\bf Condition (C)} (Eq. \ref{ConditionC}). We set:
\begin{align*}\Gamma_{\delta_k}=\{\delta_k,\;\ep_{s-(2k-1)}\pm\ep_{s+k+i},\;\ep_{s+k}\mp\ep_{s+k+i};\;1\le i\le n-s-k,\\
\,\ep_{s+k}+\ep_{s-(2k-1)-j},\;\ep_{s-(2k-1)}-\ep_{s-(2k-1)-j};\;1\le j\le s-2k\}\end{align*}

and
\begin{align*}\Gamma_{\delta'_k}=\{\delta'_k,\;\ep_{s-2k}\mp\ep_{s+k+i},\;-\ep_{s+k}\pm\ep_{s+k+i};\;1\le i\le n-s-k,\\
-\ep_{s+k}+\ep_{s-2k-j},\;\ep_{s-2k}-\ep_{s-2k-j};\;1\le j\le s-2k-1\}.
\end{align*}

Since $\theta(\ep_{s-(2k-1)}\pm\ep_{s+k+i})=\ep_{s+k}\mp\ep_{s+k+i}\in\Delta_{\pi'_2}$ for all $1\le i\le n-s-k$ and $\theta(\ep_{s+k}+\ep_{s-(2k-1)-j})=\ep_{s-(2k-1)}-\ep_{s-(2k-1)-j}\in\Delta_{\pi'_1}$ for all $1\le j\le s-2k$, 
the Heisenberg set $\Gamma_{\delta_k}$ with centre $\delta_k$ satisfies condition (C). A similar argument shows that $\Gamma_{\delta'_k}$ is a Heisenberg set with centre $\delta'_k$, which satisfies {\bf Condition (C)} (Eq. \ref{ConditionC}). We have that $\Gamma_{\delta_k}\subset\Gamma^m$ and $\Gamma_{\delta'_k}\subset\Gamma^m$.

\item Assume that $3s/2\le n$ or $3s/2>n$. We set:
\begin{align*}\Gamma_{\ep_s}=\{\ep_s,\;\pm\ep_i,\,\ep_s\mp\ep_i;\;s+1\le i\le n,\;\ep_s-\ep_j,\;\ep_j;\;1\le j\le s-1\}.\end{align*}
Then $\Gamma_{\ep_s}$ is a Heisenberg set with centre $\ep_s$.

Moreover $\theta(\ep_s\pm\ep_i)=\mp\ep_i\in\Delta_{\pi'_2}$ for all $s+1\le i\le n$ and $\theta(\ep_j)=\ep_s-\ep_j\in\Delta_{\pi'_1}$ for all $1\le j\le s-1$. Then $\Gamma_{\ep_s}$ satisfies {\bf Condition (C)} (Eq. \ref{ConditionC}) and $\Gamma_{\ep_s}\subset\Gamma^m$.

\end{enumerate}

One verifies furthermore that all the Heisenberg sets described above are disjoint.

\subsection{The set $T$}

Denote by $T$ the complement of $\Gamma$ in $\Delta(\pi')=\Delta^+\sqcup\Delta^-_{\pi'}$.

\begin{lm}\label{TB}
We have that $\lvert T\rvert={\rm index}\,\p'$.
\end{lm}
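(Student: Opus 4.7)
The plan is to establish the equality by direct counting. Since the Heisenberg sets $\Gamma_\gamma$, $\gamma \in S$, are pairwise disjoint (as noted at the end of subsection 5.2), we may write $\Gamma = \bigsqcup_{\gamma \in S} \Gamma_\gamma$ and hence $\lvert T\rvert = \lvert\Delta(\pi')\rvert - \lvert\Gamma\rvert$. First I would compute $\lvert\Delta(\pi')\rvert = \lvert\Delta^+\rvert + \lvert\Delta^-_{\pi'}\rvert$, using that $\lvert\Delta^+\rvert = n^2$ in type $\mathrm{B}_n$ and that $\pi'$ decomposes into components $\pi'_1$ of type $\mathrm{A}_{s-1}$ and $\pi'_2$ of type $\mathrm{B}_{n-s}$, which yields $\lvert\Delta^-_{\pi'}\rvert = \tfrac{s(s-1)}{2} + (n-s)^2$.

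Next I would enumerate $\lvert\Gamma\rvert = \sum_{\gamma \in S}\lvert\Gamma_\gamma\rvert$ following the case distinction of subsection 5.1: (I) $n = s$, (II) $n > s$ with $3s/2 \le n$ (with parity subcases for $s/2$), and (III) $n > s$ with $3s/2 > n$. Case (I) is immediate because the sets $S$ and $\Gamma_\gamma$ coincide with those in \cite[Sec. 7]{FL1}, where the same counting has already been verified. In cases (II) and (III), the cardinalities of $\Gamma_{t_i}$, $\Gamma_{\delta_k}$, $\Gamma_{\delta'_k}$, $\Gamma_{\ep_s}$, $\Gamma_{\gamma_j}$, $\Gamma_{\gamma'_j}$ and $\Gamma_{\ep_1+\ep_{3s/2}}$ are all linear in their indexing variables (they are sizes of Heisenberg sets inside classical sub-root systems that can be read off directly from subsection 5.2); summing these arithmetic progressions yields $\lvert\Gamma\rvert$ as a closed-form polynomial in $n$ and $s$.

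Finally I would compare $\lvert T\rvert = \lvert\Delta(\pi')\rvert - \lvert\Gamma\rvert$ with the formula for ${\rm index}\,\p'$ for an even maximal parabolic subalgebra of a simple Lie algebra of type $\mathrm{B}_n$. Since $\p_{\Lambda} = \p'$ for maximal parabolics (Remark \ref{truncrq}), this index is given by \cite[Prop. 3.2]{FJ1} (referenced in subsection 3.3) and depends explicitly on $n$ and $s$. Matching the two expressions case by case completes the proof.

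The main obstacle will be the bookkeeping: keeping track of the parity subcases ($s/2$ even versus $s/2$ odd in case (II)), and handling the boundary situations where an indexing range becomes empty or degenerate (for instance $s = 2$, $n = s+1$, or $3s/2$ close to $n$). Each such edge case must be checked individually to ensure that the ranges listed in subsection 5.1 correctly produce the Heisenberg sets of subsection 5.2, but once the closed-form expressions are in hand the final identification with ${\rm index}\,\p'$ is purely mechanical.
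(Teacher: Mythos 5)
Your plan is sound and would prove the lemma, but it takes a genuinely different computational route than the paper. The paper's proof simply exhibits the set $T$ explicitly in each case (writing out its elements in terms of $\ep_i$'s) and then counts them directly; each list has exactly $n-s/2+1$ entries, which is the known value of ${\rm index}\,\p'$. You instead propose to compute $\lvert T\rvert$ as a difference, $\lvert\Delta(\pi')\rvert - \lvert\Gamma\rvert$, summing the cardinalities of all the Heisenberg sets $\Gamma_\gamma$ and subtracting from $\lvert\Delta^+\rvert + \lvert\Delta^-_{\pi'}\rvert$. Both calculations are legitimate; the subtraction route involves considerably more arithmetic (summing several arithmetic progressions of Heisenberg-set sizes with parity subcases), whereas the paper's direct enumeration is short and, more importantly, produces the explicit description of $T$ that is essential later (the proof of Lemma \ref{eqbounds} needs the actual roots $\gamma\in T$ to compute $s(\gamma)$ and $\partial_\gamma$). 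So the paper's enumeration is not just a counting trick but does double duty. Also note that the paper cites \cite[Proof of Lem. 7.5]{FL1} rather than \cite[Prop. 3.2]{FJ1} for the value ${\rm index}\,\p' = n-s/2+1$, though these agree.
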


\begin{proof}

Recall for instance \cite[Proof of Lem. 7.5]{FL1} that ${\rm index}\,\p'=n-s/2+1$.

For $n=s$, we have that $T=\{\ep_{s-1}+\ep_s,\,\ep_{2i-1}-\ep_{2i};\;1\le i\le s/2\}$ and then $\lvert T\rvert={\rm index}\,\p'$.

Assume that $n>s$ and that $3s/2\le n$.

 If $s/2$ is even,  one checks that 
\begin{align*}T=\{\ep_1-\ep_{3s/2},\,\ep_{s-1}+\ep_s,\,\ep_{2i-1}-\ep_{2i};\;1\le i\le s/2,\\
\ep_{s-(2k-1)}-\ep_{s+k};\;1\le k\le (s-2)/2,\\
\ep_{s+2j+1}-\ep_{s+2j+2};\;s/4\le j\le[(n-2-s)/2],\\
-\ep_{s+2l}+\ep_{s+2l+1};\;s/4\le l\le[(n-1-s)/2]\}.\end{align*}

Then $\lvert T\rvert=n-s/2+1={\rm index}\,\p'.$

If $s/2$ is odd, one checks that
\begin{align*}T=\{\ep_1-\ep_{3s/2},\,\ep_{s-1}+\ep_s,\,\ep_{2i-1}-\ep_{2i};\;1\le i\le s/2,\\
\ep_{s-(2k-1)}-\ep_{s+k};\;1\le k\le (s-2)/2,\\
\ep_{s+2j+2}-\ep_{s+2j+3};\;(s-2)/4\le j\le[(n-3-s)/2],\\
-\ep_{s+2l+1}+\ep_{s+2l+2};\;(s-2)/4\le l\le[(n-2-s)/2]\}.\end{align*}

Then $\lvert T\rvert=n-s/2+1={\rm index}\,\p'.$

Assume that $n>s$ and that $3s/2>n$.

Then one checks that
\begin{align*}T=\{\ep_{s-1}+\ep_s,\,\ep_{2i-1}-\ep_{2i};\;1\le i\le s/2,\\
\ep_{s-(2k-1)}-\ep_{s+k};\;1\le k\le n-s\}.\end{align*}

Then $\lvert T\rvert=n-s/2+1={\rm index}\,\p'.$
\end{proof}

\subsection{The nondegeneracy of the restriction of $\widetilde\Phi_y$ to $\g_O\times\g_O$}

Recall that $y=\sum_{\gamma\in S} x_{-\gamma}$. We will verify below that conditions $\it(2)$, $\it(3)$ and $\it(4)$ of Prop. \ref{condnondegen} and Condition $\bf (C')$ are satisfied.


\begin{lm}\label{O}
Let $\alpha\in O^{\pm}$. Then $S_{\alpha}\cap O^{\pm}=\{\theta(\alpha)\}$.

\end{lm}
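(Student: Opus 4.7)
The inclusion $\theta(\alpha)\in S_{\alpha}\cap O^{\pm}$ is immediate: if $\alpha\in\Gamma_{\gamma}^0\subset O^{\pm}$ for some $\gamma\in S^{\pm}$, then by construction $\alpha+\theta(\alpha)=\gamma\in S$, and Condition (C) established in Section \ref{HS} ensures that $\alpha$ and $\theta(\alpha)$ are not both in $\Delta^+\setminus\Delta^+_{\pi'}$, whence $[x_{\alpha},x_{\theta(\alpha)}]_{\widetilde\p}\neq 0$ since $\m$ is abelian in $\widetilde\p$. Thus $\theta(\alpha)\in S_{\alpha}\cap O^{\pm}$.

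For the reverse inclusion, I focus on the case $\alpha\in O^+$, the case $\alpha\in O^-$ being strictly analogous (replacing the Heisenberg sets of $\Gamma^+$ by those of $\Gamma^-$ and the positive elements of $S$ by the negative ones). Let $\beta\in S_{\alpha}\cap O^+$. Since $\alpha,\beta\in\Delta^+$, their sum $\alpha+\beta\in S$ is a positive root, hence $\alpha+\beta\in S^+\sqcup(S^m\cap\Delta^+)$; observe that $S^m$ consists entirely of positive roots. Furthermore, the non-vanishing of $[x_{\alpha},x_{\beta}]_{\widetilde\p}$ together with the abelianness of $\m$ in $\widetilde\p$ forces at least one of $\alpha,\beta$ to lie in $\Delta^+_{\pi'}=\Delta^+_{\pi'_1}\sqcup\Delta^+_{\pi'_2}$.

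The heart of the argument is a support analysis in the ${\rm B}_n$ root system: a positive root of ${\rm B}_n$ is either $\ep_p$ or $\ep_p\pm\ep_q$ with $p<q$, so the decompositions $\gamma=\alpha+\beta$ with $\alpha,\beta\in\Delta^+$ are completely determined by the $\ep$-support of $\gamma$. I first rule out $\alpha+\beta\in S^m$. Inspecting each of the three shapes $\gamma=\ep_s$, $\gamma=\ep_{s-(2k-1)}+\ep_{s+k}$ and $\gamma=\ep_{s-2k}-\ep_{s+k}$, one sees that every positive decomposition is of the form $\{\ep_j,\gamma-\ep_j\}$ for some $1\le j\le s-1$ or $\{\ep_p-\ep_r,\ep_r-\ep_{s+k}\}$ with a "crossing" index in the middle. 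In the first case the short root $\ep_j$ lies in $\Gamma_{\ep_s}^0\subset O^m$, so is not in $O^+$; in the second case one checks against the explicit lists of Section \ref{HS} that either the first or second summand has support not appearing in any of $\Gamma_{t_i}$, $\Gamma_{\ep_1+\ep_{3s/2}}$ or $\Gamma_{\gamma_j}$, hence again is not in $O^+$. This eliminates $S^m$.

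It remains to treat $\alpha+\beta=\gamma\in S^+$, i.e.\ $\gamma\in\{t_i,\,\ep_1+\ep_{3s/2},\,\gamma_j\}$. A positive decomposition of a long root $\gamma=\ep_p+\ep_q$ has one of the forms $\{\ep_p,\ep_q\}$ or $\{\ep_p\pm\ep_r,\ep_q\mp\ep_r\}$ with $r\neq p,q$. The short-root decomposition again puts the summands into $\Gamma_{\ep_s}^0\subset O^m$ and is excluded. For the second form, the Heisenberg set $\Gamma_{\gamma}^0$ contains precisely the pairs $\{\alpha,\theta(\alpha)\}$ indexed by $r$ ranging through the list written in Section \ref{HS}; hence if $\alpha\in\Gamma_{\gamma}^0$ then necessarily $\beta=\theta(\alpha)$. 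The alternative that $\alpha\in\Gamma_{\gamma'}^0$ for some $\gamma'\in S^+$ with $\gamma'\neq\gamma$ is excluded by comparing the index ranges of the distinct Heisenberg sets in $\Gamma^+$: the indices in $\gamma$ concentrate in a range disjoint (or nearly so) from those available in the other $\Gamma_{\gamma'}^0$, so no "cross-Heisenberg" decomposition of $\gamma$ exists. This last combinatorial verification is the main technical obstacle of the proof, but once it is carried out one concludes $\beta=\theta(\alpha)$, as required.
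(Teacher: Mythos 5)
Your strategy — enumerate all ways of writing $\gamma\in S$ as a sum of two positive roots and then match the summands against the explicit lists of Heisenberg sets in Section \ref{HS} — is exactly the ``direct computation'' the paper itself invokes, so you are on the same path as the author. However, your sketch contains concrete errors that would have to be repaired before it could count as a proof.

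First, the enumeration of positive decompositions of $\gamma\in S^m$ is wrong. For $\gamma=\ep_s$ the only decompositions $\gamma=\ep_j+(\ep_s-\ep_j)$ into two \emph{positive} roots require $j>s$, not $1\le j\le s-1$ (for $j<s$ the second summand $\ep_s-\ep_j=-(\ep_j-\ep_s)$ is negative). For $\gamma=\delta_k=\ep_{s-(2k-1)}+\ep_{s+k}\in S^m$ the decompositions $\{\ep_p-\ep_r,\ep_q+\ep_r\}$ (with $p<r$, $r\neq q$) and $\{\ep_p+\ep_r,\ep_q-\ep_r\}$ (with $r>q$) are absent from your first-case list, and these are precisely the delicate ones: when $p<r\le s$ the summand $\ep_p-\ep_r$ genuinely does lie in some $\Gamma_{t_i}^0\subset O^+$, so one actually has to verify that the other summand $\ep_q+\ep_r$ (which straddles $s$) is not in $O^+$. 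Your ``crossing index'' decomposition $\{\ep_p-\ep_r,\ep_r-\ep_{s+k}\}$ sums to $\ep_p-\ep_{s+k}$, which is the $\delta'_k$ shape, not $\delta_k$; the two subcases of $S^m$ got conflated.

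Second, the step you yourself describe as ``the main technical obstacle'' — ruling out a decomposition with $\alpha\in\Gamma_{\gamma'}^0$, $\gamma'\in S^+$, $\gamma'\neq\gamma$, and $\beta=\gamma-\alpha\in O^+$ — is asserted via a vague ``index ranges disjoint (or nearly so)'' remark rather than carried out. The claim is true, but the phrase ``nearly so'' flags exactly where the work is; e.g.\ for $\gamma=\ep_1+\ep_{3s/2}$ and $\alpha=\ep_1-\ep_r\in\Gamma_{t_1}^0$ with $3\le r\le s$, the index ranges of $\Gamma_{t_1}$ and $\Gamma_{\ep_1+\ep_{3s/2}}$ are not disjoint at the index $1$, and one must check directly that $\beta=\ep_{3s/2}+\ep_r$ does not land in any Heisenberg set of $\Gamma^+$. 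Since the paper's own proof reads only ``It can be checked by direct computation,'' you are not missing an idea the paper supplies; but if your sketch is meant to be more informative than that sentence, the case enumeration needs correcting and the index-range comparison needs to be made explicit for each pair $(\gamma,\gamma')\in S^+\times S^+$ (and symmetrically for $S^-$).
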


\begin{proof}

It can be checked by direct computation.
\end{proof}

\begin{lm}\label{lmconditionC'}

The condition $(\bf C')$ (Eq. \ref{ConditionC'}) is satisfied.
\end{lm}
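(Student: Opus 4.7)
The plan is to verify condition $(\bf C')$ by a direct case-by-case analysis on the roots $\alpha \in O = \bigsqcup_{\gamma \in S} \Gamma_\gamma^0$, partitioning according to the Heisenberg set $\Gamma_\gamma$ that contains $\alpha$ among those built in subsection \ref{HS}. Since condition $(\bf C)$ has already been verified in the construction of each $\Gamma_\gamma$, one has $\theta(\alpha) \in S_\alpha$ for every $\alpha \in O$, hence $|S_\alpha|\ge 1$; so showing $O = O_1 \sqcup O_2 \sqcup O_3$ reduces to the a priori upper bound $|S_\alpha|\le 3$.

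The first step is to observe that the elements of $S$ listed in subsection \ref{SetS} are short vectors of very restricted shape in the orthonormal basis $(\varepsilon_i)$, so for any fixed $\alpha$ of shape $\pm\varepsilon_i$ or $\pm(\varepsilon_i\pm\varepsilon_j)$ the set of $\beta\in\Delta$ with $\alpha+\beta\in S$ is already small. I would then restrict to those $\beta$ lying in $\bigcup_{\gamma'\in S}\Gamma_{\gamma'}^0$ and satisfying $[x_\alpha,x_\beta]_{\widetilde\p}\neq 0$; the latter rules out pairs $\alpha,\beta$ both in $\Delta^+\setminus\Delta^+_{\pi'}$ by the contracted bracket (\ref{brackettildep}). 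For the ``outer'' Heisenberg sets $\Gamma_{t_i}$ ($1\le i\le s/2-1$), $\Gamma_{\varepsilon_1+\varepsilon_{3s/2}}$, and the $\Gamma_{\gamma_j}, \Gamma_{\gamma'_j}$ living entirely in the block $\Delta_{\pi'_2}$, this elementary check immediately yields $S_\alpha=\{\theta(\alpha)\}$, so the corresponding $\alpha$ lie in $O_1$.

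The genuinely delicate situations concern the ``central'' Heisenberg sets $\Gamma_{\delta_k}, \Gamma_{\delta'_k}$ and $\Gamma_{\varepsilon_s}$, because their shapes overlap: a root such as $\varepsilon_{s-(2k-1)}-\varepsilon_{s+k+i}$, $\varepsilon_{s+k}\mp\varepsilon_{s+k+i}$ or $\varepsilon_{s+k}+\varepsilon_{s-(2k-1)-j}$ (and its variants inside $\Gamma_{\delta'_k}$ or $\Gamma_{\varepsilon_s}$) can pair with its $\theta$-partner, landing on $\delta_k$ or $\delta'_k$, as well as with a second root of $O$ landing either on $\varepsilon_s$ or on a neighbouring centre $\delta_{k\pm 1}$, $\delta'_{k\pm 1}$. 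In each such occurrence I would list explicitly the at most two additional partners, thereby placing $\alpha$ in $O_2$ or $O_3$; for every $\alpha\in O_3$ I would exhibit $\widetilde\alpha$ as the partner of $\alpha$ associated with the short-root centre $\varepsilon_s$. By inspection, such a $\widetilde\alpha$ has the shape $\pm\varepsilon_j$ or $\varepsilon_s\pm\varepsilon_j$ and reaches only two centres of $S$ (hence lies in $O_2$), while $\theta(\widetilde\alpha)$, of the form $\pm\varepsilon_j$ or $\varepsilon_s-\varepsilon_j$, is a boundary root that lies in no further Heisenberg set and therefore belongs to $O_1$.

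The main obstacle is the bookkeeping across the three regimes of subsection \ref{SetS} --- the case $n=s$ (already treated in \cite{FL1}), the case $n>s$ with $3s/2\le n$ (with a further split according to the parity of $s/2$), and the case $n>s$ with $3s/2>n$. In each regime one has to walk systematically through every shape of root in every $\Gamma_\gamma^0$ and rule out accidental sums landing in $S$; this is purely combinatorial but tedious. Once completed, $|S_\alpha|\le 3$ in every case, and the required partner $\widetilde\alpha$ is produced explicitly whenever $\alpha\in O_3$, so condition $(\bf C')$ follows.
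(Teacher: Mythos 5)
Your overall strategy — an exhaustive direct case check over the Heisenberg sets, using that condition $(\bf C)$ already gives $\theta(\alpha)\in S_\alpha$ — is the right one and is what the paper does, but the structural claims you make about $O_3$ are wrong, and the proof would not go through as outlined.

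First, you assert that the ``outer'' Heisenberg sets $\Gamma_{t_i}$, $\Gamma_{\varepsilon_1+\varepsilon_{3s/2}}$, $\Gamma_{\gamma_j}$, $\Gamma_{\gamma'_j}$ yield only $O_1$ roots, and that the $O_3$ roots live in the ``central'' sets $\Gamma_{\delta_k}$, $\Gamma_{\delta'_k}$, $\Gamma_{\varepsilon_s}$. In fact the paper shows that $O_3\neq\emptyset$ only in the regime $3s/2>n$ (you never isolate this), and the $O_3$ roots are exactly the $\alpha=\varepsilon_v-\varepsilon_u\in\Delta^-_{\pi'}$ with $1\le u\le 3s/2-n-1<3s/2-n+1\le v\le s-2$; these sit in the $\Gamma_{\gamma_j}\subset\Gamma^-$ sets (when $v\le 3s-2n-1$) or in $\Gamma_{\delta_k}$ (when $v\ge 3s-2n$). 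So some of your ``outer'' sets do contribute $O_3$ roots, and your dichotomy between outer and central sets does not hold.

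Second, and more seriously, your identification of $\widetilde\alpha$ is incorrect. You claim that $\widetilde\alpha$ is ``the partner of $\alpha$ associated with the short-root centre $\varepsilon_s$,'' with shape $\pm\varepsilon_j$ or $\varepsilon_s\pm\varepsilon_j$, and that $\theta(\widetilde\alpha)$ has shape $\pm\varepsilon_j$ or $\varepsilon_s-\varepsilon_j$. The correct $\widetilde\alpha$ (as the paper finds, and records in Remark \ref{rqtilde}) is $\widetilde\alpha=\varepsilon_u+\varepsilon_{v+1}\in\Delta^+\setminus\Delta^+_{\pi'}$; the sum $\alpha+\widetilde\alpha=\varepsilon_v+\varepsilon_{v+1}$ lands on a centre of the form $t_i=\varepsilon_{2i-1}+\varepsilon_{2i}\in S^+$, not on $\varepsilon_s$. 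Correspondingly $\widetilde\alpha$ lies in $\Gamma^0_{\varepsilon_u+\varepsilon_{u+1}}$ or $\Gamma^0_{\varepsilon_{u-1}+\varepsilon_u}$ (depending on the parity of $u$) — that is, in one of the very $\Gamma_{t_i}$'s you declared to be entirely in $O_1$ — and $\theta(\widetilde\alpha)=\varepsilon_{u\pm 1}-\varepsilon_{v+1}\in\Delta_{\pi'_1}$, not a short root. With the wrong $\widetilde\alpha$ you would not be able to verify $\widetilde\alpha\in O_2$ and $\theta(\widetilde\alpha)\in O_1$, which is precisely what $(\bf C')$ requires, so the argument as you sketch it would fail.
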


\begin{proof}
By direct computation, one can check that any root $\alpha\in O$ belongs to $O_1\sqcup O_2\sqcup O_3$. Moreover one may also verify that $O_3\neq\emptyset$ only when $3s/2> n$ and that in this case the only roots $\alpha\in O_3$ are of the form $\alpha=\varepsilon_v-\varepsilon_u\in\Delta^-_{\pi'}$ with $1\le u\le 3s/2-n-1<3s/2-n+1\le v\le s-2$. Two cases may occur. 
1) The first case is when $1\le u\le 3s/2-n-1<3s/2-n+1\le v\le 3s-2n-1$. 
\begin{enumerate}
\item[i)] Assume that $v$ is odd. 
\begin{enumerate}
\item[a)] If moreover $u+v\le 3s-2n-1$ then $\alpha\in\Gamma^0_{\ep_{3s-2n-u}-\ep_u}$ and $\theta(\alpha)=\ep_{3s-2n-u}-\ep_v$. Hence we have two roots in $S_{\alpha}\setminus\{\theta(\alpha)\}$, namely $\alpha^{(1)}=\ep_u-\ep_{3s-2n-v}$ and $\widetilde{\alpha}=\ep_u+\ep_{v+1}$.
\item[b)] If $u+v\ge 3s-2n+1$ then $\alpha\in\Gamma^0_{\ep_{v}-\ep_{3s-2n-v}}$ and then $\theta(\alpha)=\ep_u-\ep_{3s-2n-v}$. Similarly we have two roots in $S_{\alpha}\setminus\{\theta(\alpha)\}$, namely $\alpha^{(1)}=\ep_{3s-2n-u}-\ep_v$ and $\widetilde{\alpha}=\ep_u+\ep_{v+1}$.
\item[c)] In both cases above, we observe that $\widetilde{\alpha}\in\Gamma^0_{\ep_u+\ep_{u+1}}$ if $u$ is odd, resp. $\widetilde{\alpha}\in\Gamma^0_{\ep_{u-1}+\ep_{u}}$ if $u$ is even. Then $\theta(\widetilde\alpha)=\ep_{u+1}-\ep_{v+1}$ if $u$ is odd, resp. $\theta(\widetilde\alpha)=\ep_{u-1}-\ep_{v+1}$ if $u$ is even. We check that $\theta(\widetilde\alpha)\in O_1$ and that $\widetilde\alpha\in O_2$. 
\end{enumerate}
\item[ii)] Assume that  $v$ is even. Then similar considerations imply that $\theta(\widetilde\alpha)\in O_1$ and that $\widetilde\alpha\in O_2$.
\end{enumerate}
2) The second case is when $1\le u\le 3s/2-n-1<3s-2n\le v\le s-2$. 
\begin{enumerate}
\item[i)] Assume that $v$ is odd. In this case $\alpha=\ep_v-\ep_u\in \Gamma^0_{\delta_k}$ with $k\in\mathbb N^*$ such that $s-(2k-1)=v$. It follows that $\alpha\in\Gamma^0_{\ep_v+\ep_{3s/2-(v-1)/2}}$. Then $\theta(\alpha)=\ep_u+\ep_{3s/2-(v-1)/2}$ and there are two roots in $S_{\alpha}\setminus\{\theta(\alpha)\}$, namely $\alpha^{(1)}=\ep_{3s-2n-u}-\ep_v$ and $\widetilde\alpha=\ep_u+\ep_{v+1}$. One can conclude as above.
\item[ii)] Assume that $v$ is even. Similar considerations give again Eq. \ref{ConditionC'}.
\end{enumerate}
\end{proof}

\begin{Rq}\label{rqtilde}\rm

By the above proof, one may observe that the roots $\widetilde\alpha$, with $\alpha\in O_3$ are of the form $\widetilde\alpha=\ep_u+\ep_v\in\Delta^+\setminus\Delta^+_{\pi'}$ with $1\le u\le 3s/2-n-1<v\le s-2$ and $\theta\bigl(\widetilde\alpha\bigr)=\ep_{u+1}-\ep_v$ if $u$ is odd, resp. $\theta\bigl(\widetilde\alpha\bigr)=\ep_{u-1}-\ep_v$ if $u$ is even.

\end{Rq}

\begin{lm}\label{Omstat}
Any root $\alpha\in O^m$ is stationary.
\end{lm}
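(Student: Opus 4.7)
By Lemma \ref{proproots}(\ref{prop2roots}) it is enough to show that one of the two sequences $(\alpha^i)_{i\in\mathbb N}$ or $(\alpha^{(i)})_{i\in\mathbb N}$ is eventually stationary; equivalently, by Lemma \ref{proproots}(\ref{prop4roots})--(\ref{prop5roots}), we must rule out that $\alpha$ is a cyclic root. The argument will be a case analysis on which Heisenberg set $\Gamma_{\gamma}$ with $\gamma\in S^m=\{\ep_s\}\cup\{\delta_k,\delta'_k\}$ contains $\alpha$, using the partition $O^m=\Gamma^0_{\ep_s}\sqcup\bigsqcup_k\Gamma^0_{\delta_k}\sqcup\bigsqcup_k\Gamma^0_{\delta'_k}$ from \S\ref{HS}.

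For each such $\alpha$ I would first read off $\theta(\alpha)$ from the Heisenberg structure, then enumerate $S_{\theta(\alpha)}=\{\beta\in O:\beta+\theta(\alpha)\in S\}$ by checking, for every candidate $\gamma\in S=S^m\sqcup S^+\sqcup S^-$, whether $\beta:=\gamma-\theta(\alpha)$ is a root of $B_n$ that actually lies in some $\Gamma^0_{\gamma'}$. The very rigid shape of the roots in $S$ (only indices of the form $s$, $s\pm k$, $2i-1$, $2i$ can appear) drastically restricts the options, so $|S_{\theta(\alpha)}|\le 3$ in each case, and one can decide whether $\theta(\alpha)$ lies in $O_1,O_2$, or $O_3$. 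As a sample, for $\alpha=\ep_{s-(2k-1)}+\ep_{s+k+i}\in\Gamma^0_{\delta_k}$ one gets $\theta(\alpha)=\ep_{s+k}-\ep_{s+k+i}$ and $\alpha^1=\ep_{s-2(k+i)}-\ep_{s+k}\in\Gamma^0_{\delta'_k}$; a direct computation then shows $\theta(\alpha^1)\in O_1$, so that sequence is already stationary at rank $1$. Symmetric calculations handle $\alpha\in\Gamma^0_{\delta'_k}$ and the various roots in $\Gamma^0_{\ep_s}$ (distinguishing in the latter case the parity of the index $j$ in $\alpha=\ep_j$ or $\alpha=\ep_s-\ep_j$).

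The unifying principle is a monovariant on the sequence $(\alpha^i)$: the smallest $\pi'_1$-index appearing in $\alpha^i$ strictly decreases (or the sequence is already stationary) until we reach an $\alpha^{n_0}$ with $\theta(\alpha^{n_0})\in O_1$. Since this index lives in the finite set $\{1,\ldots,s-1\}$, the sequence cannot cycle and must terminate, which is exactly what Lemma \ref{proproots}(\ref{prop5roots}) forbids for a cyclic root.

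The main obstacle is the range $3s/2>n$, where $O_3\ne\emptyset$ (cf.\ Lemma \ref{lmconditionC'}): at certain steps $\theta(\alpha^n)\in O_3$ and there is a genuine branching in $S_{\theta(\alpha^n)}\setminus\{\alpha^n,\widetilde{\theta(\alpha^n)}\}$. By Remark \ref{rqtilde}, the excluded root $\widetilde{\theta(\alpha^n)}$ has the form $\ep_u+\ep_v\in\Delta^+\setminus\Delta^+_{\pi'}$ with $1\le u\le 3s/2-n-1<v\le s-2$; this is precisely the branch that would spoil the monotone index-shift, so its exclusion forces the remaining admissible $\alpha^{n+1}$ to preserve the monovariant and the argument carries through.
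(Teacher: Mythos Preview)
Your overall strategy---reducing via Lemma \ref{proproots} to showing one of the sequences terminates and then carrying out a case analysis on the Heisenberg sets---matches the paper's approach. However, the execution has two concrete errors.

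First, your sample computation is incorrect. For $\alpha=\ep_{s-(2k-1)}+\ep_{s+k+i}$ with $k+i\le (s-2)/2$, you correctly get $\alpha^1=\ep_{s-2(k+i)}-\ep_{s+k}\in\Gamma^0_{\delta'_k}$, but then $\theta(\alpha^1)=\ep_{s-2k}-\ep_{s-2(k+i)}\in\Delta^-_{\pi'_1}$, and this root is \emph{not} in $O_1$: the root $\beta=\ep_{s-2k-1}+\ep_{s-2(k+i)}$ lies in $\Gamma^0_{\ep_{s-2(k+i)-1}+\ep_{s-2(k+i)}}\subset O^+$ and satisfies $\beta+\theta(\alpha^1)=\ep_{s-2k-1}+\ep_{s-2k}\in S^+$, so $\theta(\alpha^1)\in O_2$ at least. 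The sequence does not terminate at rank~$1$.

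Second, your monovariant fails. Continuing the example, $\alpha^2=\beta=\ep_{s-2(k+i)}+\ep_{s-2k-1}$ has smallest $\pi'_1$-index $s-2(k+i)$, the \emph{same} as $\alpha^1$. So the proposed invariant does not strictly decrease at each step, and nothing prevents the sequence from cycling on this evidence alone.

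The paper's proof is genuinely more delicate. It restricts first to $\alpha\in O^m\cap(\Delta^+\setminus\Delta^+_{\pi'})$ (precisely the roots of $O^m$ not of the form $\widetilde\gamma$ or $\theta(\widetilde\gamma)$, and lying in $O_1\sqcup O_2$), computes the sequence $(\alpha^{(i)})$, and in each of nine sub-cases identifies a \emph{case-specific} parameter that decreases after a fixed number of steps. For instance, in your sample case the paper shows that $\theta(\alpha^{(3)})$ has the same shape as $\alpha$ with $k$ replaced by $k-1$; in other cases the stride and the decrementing parameter differ (e.g., $j\mapsto j-4$ after three steps in case~3). No single uniform monovariant works; the proof is an honest finite case analysis, and your sketch does not yet supply one.
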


\begin{proof}
Recall that $$O^m=\bigsqcup_{1\le k\le\min((s-2)/2,\,n-s)}\Gamma^0_{\delta_k}\sqcup \Gamma^0_{\delta'_k}\sqcup\Gamma^0_{\ep_s}$$
with for any $1\le k\le\min((s-2)/2,\,n-s)$, $\delta_k=\ep_{s-(2k-1)}+\ep_{s+k}$ and $\delta'_k=\ep_{s-2k}-\ep_{s+k}$, which both belong to $\Delta^+\setminus\Delta^+_{\pi'}$.
We will show that every root $\alpha\in O^m\cap\Delta^+\setminus\Delta^+_{\pi'}$ is stationary. This will imply by Lemma \ref{proproots} that $\theta(\alpha)$ is also stationary and then that any root in $O^m$ is stationary. Firstly one may check that every root $\alpha\in O^m\cap\Delta^+\setminus\Delta^+_{\pi'}$  is such that there exists no root $\gamma\in O_3$ verifying that $\alpha=\widetilde\gamma$ nor $\alpha=\theta(\widetilde\gamma)$ and that $\alpha\in O_1\sqcup O_2$ (by a simple observation using Remark \ref{rqtilde}, and Proof of Lemma \ref{lmconditionC'}).

Take $1\le k\le\min((s-2)/2,\,n-s)$.

1) Assume that $\alpha=\ep_{s-(2k-1)}+\ep_{s+k+i}\in\Gamma^0_{\delta_k}\cap\Delta^+\setminus\Delta^+_{\pi'}$ with $1\le i\le n-s-k$. 
\begin{enumerate}
\item[i)] Assume that $k+i=s/2$. Then $\alpha^{(1)}=\ep_1-\ep_{s-(2k-1)}\in\Gamma^0_{\ep_1+\ep_2}$ and $\theta(\alpha^{(1)})=\ep_2+\ep_{s-(2k-1)}\in O_1$. Then $\alpha^{(2)}=\alpha^{(1)}$ and the sequence $(\alpha^{(i)})_{i\in\mathbb N}$ is stationary at rank one. It follows by Lemma \ref{proproots} that $\alpha$ is stationary in this case.
\item[ii)] Assume that $k+i>s/2$. Then one checks that $\alpha\in O_1$ and it follows that $\alpha^{(1)}=\theta(\alpha)=\alpha^{(0)}$ and the sequence $(\alpha^{(i)})_{i\in\mathbb N}$ is stationary at rank 0. Again the root $\alpha$ is stationary.
\item[iii)] Assume that $k+i< s/2$. Since one also has that $k+i\le n-s$, one has that $\alpha^{(1)}=\ep_{s-(2(k+i)-1)}-\ep_{s-(2k-1)}\in\Gamma^0_{\ep_{s-2(k+i)+1}+\ep_{s-2(k+i)+2}}$. Then $\theta(\alpha^{(1)})=\ep_{s-2k+1}+\ep_{s-2(k+i)+2}$.
\begin{enumerate}
\item[a)] If $k=1$, then $\theta(\alpha^{(1)})\in O_1$ and $\alpha^{(2)}=\alpha^{(1)}$.
\item[b)] If $k\ge 2$, then $\alpha^{(2)}=\ep_{s-2k+2}-\ep_{s-2(k+i)+2}\in \Gamma^0_{\delta'_{k-1}}$ and $\theta(\alpha^{(2)})=\ep_{s-2(k+i-1)}-\ep_{s+k-1}$. One checks that $\alpha^{(3)}=\ep_{s+k-1}-\ep_{s+k+i-1}\in\Gamma^0_{\delta_{k-1}}$ and $\theta(\alpha^{(3)})=\ep_{s+k-1+i}+\ep_{s-2(k-1)+1}\in\Delta^+\setminus\Delta^+_{\pi'}$. 
\end{enumerate}
Then we can repeat the same argument as for $\alpha$ for $\theta(\alpha^{(3)})$ with $k-1$ instead of $k$. One obtains that  $\theta(\alpha^{(3(k-1)+1)})\in O_1$. It follows  that $\alpha$ is stationary.
\end{enumerate}
2) Assume that $\alpha=\ep_{s-(2k-1)}-\ep_{s+k+i}\in\Gamma^0_{\delta_k}\cap\Delta^+\setminus\Delta^+_{\pi'}$ with $1\le i\le n-s-k$.

\begin{enumerate}
\item[i)] Assume that $k+i>(s-2)/2$. Then one checks that $\alpha\in O_1$ and then $\alpha^{(1)}=\alpha^{(0)}$ and the root $\alpha$ is stationary.
\item[ii)] Assume that $k+i\le(s-2)/2$. Then one checks that $\alpha^{(1)}=\ep_{s-2(k+i)}-\ep_{s-(2k-1)}\in\Gamma^0_{\ep_{s-2(k+i)}+\ep_{s-2(k+i)-1}}\cap\Delta^+_{\pi'}$ and $\theta(\alpha^{(1)})=\ep_{s-(2k-1)}+\ep_{s-2(k+i)-1}\in\Delta^+\setminus\Delta^+_{\pi'}$.
\begin{enumerate}
\item[a)] If $k=1$, then one has that $\theta(\alpha^{(1)})\in O_1$. Hence $\alpha^{(2)}=\alpha^{(1)}$ and the root $\alpha$ is stationary.
\item[b)] If $k\ge 2$, then one has that $\alpha^{(2)}=\ep_{s-2(k-1)}-\ep_{s-2(k+i)-1}\in\Delta^-_{\pi'_1}\cap\Gamma^0_{\delta'_{k-1}}$ and $\theta(\alpha^{(2)})=\ep_{s-2(k+i)-1}-\ep_{s+k-1}\in\Delta^+\setminus\Delta^+_{\pi'}$.
 If $k+i=n-s$ then one has that $\theta(\alpha^{(2)})\in O_1$. Hence $\alpha^{(3)}=\alpha^{(2)}$ and the root $\alpha$ is stationary.
Otherwise one checks that $\alpha^{(3)}=\ep_{s+k-1}+\ep_{s+k+i+1}\in\Delta^+_{\pi'_2}\cap\Gamma^0_{\delta_{k-1}}$. Hence $\theta(\alpha^{(3)})=\ep_{s-2(k-1)+1}-\ep_{s+k+i+1}\in\Delta^+\setminus\Delta^+_{\pi'}\cap\Gamma^0_{\delta_{k-1}}$. Repeating the same argument for the root $\theta(\alpha^{(3)})$ as for the root $\alpha$ with $k-1$ instead of $k$ and $i+2$ instead of $i$, one deduces that  the root $\alpha$ is stationary.

\end{enumerate}
\end{enumerate}
3) Assume that $\alpha=\ep_{s+k}+\ep_{s-(2k-1)-j}\in\Gamma^0_{\delta_k}\cap\Delta^+\setminus\Delta^+_{\pi'}$ with $1\le j\le s-2k$. 
\begin{enumerate}
\item[i)] Assume that $j$ is odd.
\begin{enumerate}
\item[a)] If $j=1$ or if $k+(j-1)/2>\min((s-2)/2,\,n-s)$ then one checks that $\alpha\in O_1$. Hence $\alpha^{(1)}=\alpha^{(0)}$ and the root $\alpha$ is stationary.
\item[b)] Assume that $j\ge 3$ and $k+(j-1)/2\le\min((s-2)/2,\,n-s)$. Then one checks that $\alpha^{(1)}=-\ep_{s+k}-\ep_{s+k+(j-1)/2}\in\Delta^-_{\pi'_2}\cap\Gamma^0_{\delta'_k}$. Then $\theta(\alpha^{(1)})=\ep_{s-2k}+\ep_{s+k+(j-1)/2}\in\Delta^+\setminus\Delta^+_{\pi'}$. 
If $j=3$ then one checks that $\theta(\alpha^{(1)})\in O_1$. Hence $\alpha^{(2)}=\alpha^{(1)}$ and the root $\alpha$ is stationary.
Assume that $j\ge 5$. Then one checks that $\alpha^{(2)}=-\ep_{s-2k}+\ep_{s-2k-j+2}\in\Delta^+_{\pi'_1}\cap\Gamma^0_{\ep_{s-2k-j+2}+\ep_{s-2k-j+3}}$. Then $\theta(\alpha^{(2)})=\ep_{s-2k}+\ep_{s-2k-j+3}\in\Delta^+\setminus\Delta^+_{\pi'}$. One checks then that $\alpha^{(3)}=\ep_{s-2k-1}-\ep_{s-2k-j+3}\in\Delta^-_{\pi'_1}\cap\Gamma^0_{\delta_{k+1}}$. Hence $\theta(\alpha^{(3)})=\ep_{s+k+1}+\ep_{s-2(k+1)+1-(j-4)}\in\Delta^+\setminus\Delta^+_{\pi'}\cap\Gamma^0_{\delta_{k+1}}$. We continue with $k+1$ instead of $k$ and $j-4$ instead of $j$. It follows that the root $\alpha$ is stationary.
\end{enumerate}
\item[ii)] Assume that $j$ is even. Similar considerations as above allow us to deduce that the root $\alpha$ is also stationary in this case.
\end{enumerate}
\item 
One deduces that any root  in $\Gamma^0_{\delta_k}$ is stationary.

4) Assume that $\alpha=\ep_{s-2k}+\ep_{s+k+i}\in\Gamma^0_{\delta'_k}\cap\Delta^+\setminus\Delta^+_{\pi'}$, with $1\le i\le n-s-k$.  
We have $\theta(\alpha)=-\ep_{s+k}-\ep_{s+k+i}\in\Delta^-_{\pi'_2}$ and $\theta(\alpha)\in O_1\sqcup O_2$.
\begin{enumerate}
\item[i)] If $k+i\le(s-2)/2$ then $\alpha^1=\ep_{s-2(k+i)}+\ep_{s+k}\in\Gamma^0_{\delta_k}$. It follows by the above that there exists $n_0\in\mathbb N$ such that $({\alpha^1})^{n_0+1}=({\alpha^1})^{n_0}$ that is, $\alpha^{n_0+2}=\alpha^{n_0+1}$ by \ref{eq2Rkseq} of Remark \ref{Rkseq}. Hence the root $\alpha$ is stationary by Lemma \ref{proproots}.
\item[ii)] Assume that $k+i>(s-2)/2$ (then $3s/2\le n$). \begin{enumerate}
\item[a)] Assume that $s/2$ is even. If $k+i=n-s$ and $k+i$ even, then $\theta(\alpha)\in O_1$. If $k+i$ is even and $k+i+1\le n-s$, then $\alpha^1=\ep_{s+k}-\ep_{s+k+i+1}\in\Gamma^0_{\delta_k}$. If $k+i$ is odd and $i\ge 2$ then $\alpha^1=\ep_{s+k}-\ep_{s+k+i-1}\in\Gamma^0_{\delta_k}$. If $k+i$ is odd and $i=1$ then $\theta(\alpha)\in O_1$. 
\item[b)] Assume that $s/2$ is odd.  If $k+i$ is even and $i\ge 2$, then $\alpha^1=\ep_{s+k}-\ep_{s+k+i-1}\in\Gamma^0_{\delta_k}$.  If $k+i$ is odd and $k+i+1\le n-s$, then $\alpha^1=\ep_{s+k}-\ep_{s+k+i+1}\in\Gamma^0_{\delta_k}$. 
\end{enumerate} We conclude as above that $\alpha$ is stationary.

\end{enumerate}
 
 5) Assume that $\alpha=\ep_{s-2k}-\ep_{s+k+i}\in\Gamma^0_{\delta'_k}\cap\Delta^+\setminus\Delta^+_{\pi'}$, with $1\le i\le n-s-k$. Similarly as above, we conclude that $\alpha^1=\alpha$ or that $\alpha^1\in\Gamma^0_{\delta_k}$ which implies that $\alpha$ is stationary.
 
 6) Assume that $\alpha=-\ep_{s+k}+\ep_{s-2k-j}\in\Gamma^0_{\delta'_k}\cap\Delta^+\setminus\Delta^+_{\pi'}$ with $1\le j\le s-2k-1$. 
 \begin{enumerate}
 \item[i)] Assume that $j$ is odd. If $k+(j+1)/2\le\min((s-2)/2,\,n-s)$ or if $k+(j+1)/2=s/2\le n-s$ then $\alpha^{(1)}=\ep_{s+k}+\ep_{s+k+(j+1)/2}\in\Gamma^0_{\delta_k}\cap\Delta^+_{\pi'_2}$. Otherwise we can check that $\alpha\in O_1$. In any case, we obtain by the above that $\alpha$ is stationary by Lemma \ref{proproots}.
 \item[ii)] Assume that $j$ is even.  If $k+j/2\le\min((s-2)/2,\,n-s)$  then $\alpha^{(1)}=\ep_{s+k}-\ep_{s+k+j/2}\in\Gamma^0_{\delta_k}\cap\Delta^+_{\pi'_2}$. Otherwise we can check that $\alpha\in O_1$. In any case, we obtain by the above that $\alpha$ is stationary by Lemma \ref{proproots}.
 \end{enumerate}
 One deduces that any root in $\Gamma^0_{\delta'_k}$ is stationary.

7) Assume that $\alpha=\ep_s-\ep_i\in\Gamma^0_{\ep_s}\cap\Delta^+\setminus\Delta^+_{\pi'}$, with $s+1\le i\le n$. 
\begin{enumerate}
\item[i)] If $i\ge 3s/2$ one checks easily that $\alpha\in O_1$ and then $\alpha^{(1)}=\alpha^{(0)}$ and the root $\alpha$ is stationary.
 
\item[ii)]  If $i<3s/2$ one checks that $\alpha^{(1)}=\ep_{3s-2i}-\ep_s\in\Delta^+_{\pi'_1}\cap\Gamma^0_{\ep_{3s-2i-1}+\ep_{3s-2i}}$ and then $\theta(\alpha^{(1)})=\ep_s+\ep_{3s-2i-1}\in O_1$. It follows that $\alpha^{(2)}=\alpha^{(1)}$ and the root $\alpha$ is stationary.
  \end{enumerate}
 8) Assume that $\alpha=\ep_s+\ep_i\in\Gamma^0_{\ep_s}\cap\Delta^+\setminus\Delta^+_{\pi'}$, with $s+1\le i\le n$. 
\begin{enumerate}
\item[i)] If $s+1<i\le 3s/2$ one checks that $\alpha^{(1)}=\ep_{3s-2i+1}-\ep_s\in\Delta^+_{\pi'_1}\cap\Gamma^0_{\ep_{3s-2i+1}+\ep_{3s-2i+2}}$ and then $\theta(\alpha^{(1)})=\ep_s+\ep_{3s-2i+2}\in O_1$. Hence the root $\alpha$ is stationary.

\item[ii)] If $i>3s/2$ or if $i=s+1$ then one checks that $\alpha\in O_1$ and again $\alpha$ is stationary.
\end{enumerate}

9) Assume that $\alpha=\ep_j\in\Gamma^0_{\ep_s}\cap\Delta^+\setminus\Delta^+_{\pi'}$, with $1\le j\le s-1$. 
\begin{enumerate}
\item[i)] If $j$ is odd, one checks that $\alpha^{(1)}=\ep_{3s/2+(1-j)/2}\in\Delta^+_{\pi'_2}\cap\Gamma^0_{\ep_s}$ and then $\theta(\alpha^{(1)})=\ep_s-\ep_{3s/2+(1-j)/2}\in\Gamma^0_{\ep_s}$. By the above (case (7)) we deduce that the root $\alpha$ is stationary.

\item[ii)] If $j$ is even, one checks that $\alpha^{(1)}=-\ep_{3s/2-j/2}\in\Delta^-_{\pi'_2}\cap\Gamma^0_{\ep_s}$ and then $\theta(\alpha^{(1)})=\ep_s+\ep_{3s/2-j/2}\in\Gamma^0_{\ep_s}$. By the above (case (8)) one has again that the root $\alpha$ is stationary.
\end{enumerate}

We deduce that any root  in $\Gamma^0_{\ep_s}$ is stationary. This completes the proof of the Lemma.
\end{proof}

\subsection{An adapted pair.}

Recall that $\h_{\Lambda}=\{0\}$ in the present case.
Then by Lemmas \ref{basisB}, \ref{O} and \ref{Omstat} every condition of Prop. \ref{condnondegen} is satisfied. Finally Lemma \ref{TB} gives also  that every condition of Lemma \ref{Adaplm} for having an adapted pair  is satisfied. Then one can deduce the following.

\begin{cor}\label{corb} Let $y=\sum_{\gamma\in S}x_{-\gamma}$.
One has that $\widetilde\p_{\Lambda}=\widetilde{\p_{\Lambda}}=\widetilde\p'$ and
$$\ad^*\widetilde\p_{\Lambda}(y)\oplus\g_{-T}=\widetilde\p_{\Lambda}^*.$$
In particular $y$ is regular in $\widetilde\p_{\Lambda}^*$ and if we denote by $h\in\h_{\pi'}$ the unique element such that $\gamma(h)=1$ for all $\gamma\in S$, then $(h,\,y)$ is an adapted pair for $\widetilde\p'$. Moreover  for all $\gamma\in T$, denote by $s(\gamma)$ the unique element in $\mathbb Q S$ such that $\gamma+s(\gamma)$ vanishes on $\h_{\pi'}$. Then if, for all $\gamma\in T$, $\gamma+s(\gamma)\neq 0$ and $s(\gamma)\in\mathbb NS$, one has that 
\begin{equation}{\rm ch}\,Sy\bigl(\widetilde\p\bigr)={\rm ch}\,Y\bigl(\widetilde\p'\bigr)\le\prod_{\gamma\in T}\bigl(1-e^{\gamma+s(\gamma)}\bigr)^{-1}.\label{ub}\end{equation}\end{cor}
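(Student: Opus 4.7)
The plan is to apply Lemma \ref{Adaplm} directly to the data $(S,T,\{\Gamma_\gamma\}_{\gamma\in S})$ built in Subsections \ref{SetS} and \ref{HS}. Since $\p$ is an even maximal parabolic subalgebra, Remark \ref{truncrq} gives $\p_\Lambda=\p'$, so $\h_\Lambda=\{0\}$ and $\h_{\pi'}\oplus\h_\Lambda=\h_{\pi'}$; Corollary \ref{trunccor} then yields $\widetilde\p_\Lambda=\widetilde\p'$, and this simultaneously delivers the identification $\widetilde\p_\Lambda=\widetilde{\p_\Lambda}=\widetilde\p'$ asserted in the statement. It remains to check the four hypotheses $i)$--$iv)$ of Lemma \ref{Adaplm} and then to read off the conclusion.

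Hypothesis $i)$, that $S_{\mid\h_{\pi'}}$ is a basis of $\h_{\pi'}^*$, is precisely Lemma \ref{basisB}. Hypothesis $ii)$, the disjoint decomposition $\Delta(\pi')=\bigsqcup_{\gamma\in S}\Gamma_\gamma\sqcup T$, holds by the very definition of $T$ as the complement of $\Gamma$ in $\Delta(\pi')$, together with the pairwise disjointness of the Heisenberg sets noted at the end of Subsection \ref{HS}. Hypothesis $iii)$, that $\lvert T\rvert={\rm index}\,\p_\Lambda={\rm index}\,\p'$, is Lemma \ref{TB}.

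The real work is hypothesis $iv)$, i.e.\ the nondegeneracy of $\widetilde\Phi_y$ on $\g_O\times\g_O$. For this I will appeal to Proposition \ref{condnondegen}, whose four conditions are exactly (1) the basis property (Lemma \ref{basisB} again), (2) and (3) the mutual avoidance $S_\alpha\cap O^\pm=\{\theta(\alpha)\}$ for $\alpha\in O^\pm$ (Lemma \ref{O}), and (4) the statement that every $\alpha\in O^m$ is stationary (Lemma \ref{Omstat}). To invoke Proposition \ref{condnondegen} meaningfully, the ambient combinatorial framework of Section \ref{Adap} must apply, which requires Conditions $({\bf C})$ and $({\bf C}')$: the former was built into the explicit definition of each Heisenberg set in Subsection \ref{HS}, and the latter is Lemma \ref{lmconditionC'}.

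With $i)$--$iv)$ in hand, Lemma \ref{Adaplm} immediately produces $\ad^*\widetilde\p_\Lambda(y)\oplus\g_{-T}=\widetilde\p_\Lambda^*$, the regularity of $y\in\widetilde\p_\Lambda^*$, and the existence of a unique $h\in\h_{\pi'}$ with $\gamma(h)=1$ for all $\gamma\in S$, which makes $(h,y)$ an adapted pair for $\widetilde\p'$. Under the additional assumption of the corollary that $\gamma+s(\gamma)\ne 0$ and $s(\gamma)\in\mathbb N S$ for every $\gamma\in T$, the character inequality (\ref{ub}) is the last assertion of Lemma \ref{Adaplm}. The main obstacle in principle is hypothesis $iv)$, and concretely its hidden cost is the stationary-root verification in Lemma \ref{Omstat}: one must trace the two sequences $(\alpha^i)$ and $(\alpha^{(i)})$ through the various chains $\Gamma_{\delta_k}^0$, $\Gamma_{\delta'_k}^0$, $\Gamma_{\ep_s}^0$ and show they stabilize. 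Since that verification is already carried out, the corollary itself reduces to a clean synthesis of the results of Section \ref{Adap} and of Subsections \ref{SetS}--\ref{HS}.
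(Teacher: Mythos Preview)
Your proposal is correct and follows essentially the same approach as the paper: verify hypotheses $i)$--$iv)$ of Lemma \ref{Adaplm} via Lemmas \ref{basisB}, \ref{TB}, \ref{O}, \ref{lmconditionC'}, \ref{Omstat} (the last three feeding into Proposition \ref{condnondegen}), then read off the conclusion. The only minor difference is that you obtain $\widetilde\p_\Lambda=\widetilde{\p_\Lambda}=\widetilde\p'$ directly from Corollary \ref{trunccor} and Remark \ref{Rqtrunc}, whereas the paper relies on this being part of the conclusion of Lemma \ref{Adaplm}; both routes are valid.
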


To prove that $y+\g_{-T}$ is a Weierstrass section for $Sy\bigl(\widetilde\p\bigr)$ it remains to compute $s(\gamma)$, for all $\gamma\in T$, and  by Lemma \ref{Adaplm} to show that the upper bound given above coincides with the lower bound constructed in \cite{F00}.

Since for $n=s$, we have taken exactly the same Heisenberg sets as in the nondegenerate case (by what we explained in the beginning of subsection \ref{HS}), we have already obtained for $n=s$ a Weierstrass section for $Sy\bigl(\widetilde\p\bigr)$ in this case by \cite[Lem. 7.9, Thm. 7.10]{FL1}. From now on we will assume that $n>s$.

\subsection{A Weierstrass section.}

Here we recall the lower bound constructed in \cite{F00}. By \cite[Prop. 9.10.1]{F00} one has the following Proposition.
\begin{prop}
Let $E(\pi')$ be the set of $\langle ij\rangle$-orbits in $\pi$ defined as in \cite[Sec. 8]{F00}. For any $\Gamma\in E(\pi')$, set $\delta_{\Gamma}=w_{0}'d_{\Gamma}-w_0d_{\Gamma}$, with $d_{\Gamma}=\sum_{\gamma\in\Gamma}\varpi_{\gamma}$. One has that
\begin{equation}\prod_{\Gamma\in E(\pi')}(1-e^{\delta_{\Gamma}})^{-1}\label{lb}\end{equation} is a lower bound for ${\rm ch}\,Sy\bigl(\widetilde\p\bigr)$ when the latter is well defined, namely when 
every weight subspace $Sy\bigl(\widetilde\p\bigr)_{\nu}$, for $\nu\in\h^*$, of $Sy\bigl(\widetilde\p\bigr)$ is finite-dimensional.
\end{prop}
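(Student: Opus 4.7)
The plan is to appeal directly to the construction already carried out in our earlier paper \cite{F00}; the present statement is recalled there as \cite[Prop.~9.10.1]{F00}, so my task is really to review in which order one assembles its ingredients. First I would recall from \cite[Thm.~9.6.1]{F00} the $\h$-weighted subalgebra $\widetilde{C}_{\mathfrak r}^{U(\mathfrak r')}\subset C\bigl(\widetilde\p^-\bigr)$ formed by certain invariant matrix coefficients under the coadjoint representation of $U(\mathfrak r')$; its definition, its decomposition into $\h$-weight subspaces, and the parametrization of a free system of generators by the set $E(\pi')$ of $\langle ij\rangle$-orbits in $\pi$ are set up in \cite[Sec.~2.3--2.5]{F00} and \cite[Sec.~8]{F00}. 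The key input is that $\widetilde{C}_{\mathfrak r}^{U(\mathfrak r')}$ is a polynomial algebra on homogeneous, algebraically independent weight generators, one for each $\Gamma\in E(\pi')$.

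Next I would identify the weight of each generator. By the combinatorial computation performed in \cite[Sec.~9]{F00}, the generator attached to an orbit $\Gamma\in E(\pi')$ has weight exactly $\delta_{\Gamma}=w_0'd_{\Gamma}-w_0 d_{\Gamma}$, with $d_{\Gamma}=\sum_{\gamma\in\Gamma}\varpi_{\gamma}$. Hence the multiset of weights of a free system of generators of $\widetilde{C}_{\mathfrak r}^{U(\mathfrak r')}$ is exactly $\{\delta_{\Gamma}\mid\Gamma\in E(\pi')\}$, so that its formal character, which is well defined because the generators have pairwise distinct $\h$-weights lying in a suitable cone, equals $\prod_{\Gamma\in E(\pi')}\bigl(1-e^{\delta_{\Gamma}}\bigr)^{-1}$.

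Then I would invoke the embedding of graded $\h$-modules $\widetilde{C}_{\mathfrak r}^{U(\mathfrak r')}\hookrightarrow Sy\bigl(\widetilde\p\bigr)$ provided in \cite[Thm.~9.6.1]{F00}: each such invariant matrix coefficient produces, via the standard symbol map from $U\bigl(\widetilde\p\bigr)^*$ to $S\bigl(\widetilde\p\bigr)$, a symmetric semi-invariant of the same weight, and algebraic independence is preserved. Under the hypothesis that each weight subspace of $Sy\bigl(\widetilde\p\bigr)$ is finite-dimensional, comparing formal characters termwise gives the required inequality
$${\rm ch}\,Sy\bigl(\widetilde\p\bigr)\ge\prod_{\Gamma\in E(\pi')}\bigl(1-e^{\delta_{\Gamma}}\bigr)^{-1}.$$

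The main obstacle in this proof lies entirely in \cite{F00}: constructing the matrix coefficients so that they are simultaneously $\ad U(\mathfrak r')$-invariant, behave as polynomial generators after passage to the symmetric algebra, and have precisely the weights $\delta_{\Gamma}$. Granting that delicate work, the present proposition follows immediately; in the body of this paper it is then combined with the upper bound of Corollary \ref{corb} to pin down ${\rm ch}\,Sy\bigl(\widetilde\p\bigr)$ exactly.
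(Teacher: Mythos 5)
Your proposal is correct and follows the same route as the paper: the paper itself does not give an independent argument for this proposition but simply cites \cite[Prop.~9.10.1]{F00}, which is exactly what you do, supplemented by an accurate (at this level of detail) synopsis of the ingredients from \cite[Thm.~9.6.1 and Sec.~8--9]{F00} — the polynomial $\h$-module $\widetilde{C}_{\mathfrak r}^{U(\mathfrak r')}$ of invariant matrix coefficients, its free generators indexed by $E(\pi')$ with weights $\delta_{\Gamma}$, and its embedding into $Sy\bigl(\widetilde\p\bigr)$ yielding the character inequality once finite-dimensionality of weight spaces is assumed. There is no gap to flag; the real work lives entirely in \cite{F00}, which both you and the paper take as given.
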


Moreover by \cite[Lem. 7.8]{FL1}, the lower bound is given by the following Eq. \ref{cha}. (Since in \cite{FL1} one has considered the opposite parabolic subalgebra, the weights in the present paper must be changed to their opposite.)
\begin{lm}{\cite[Lem. 7.8]{FL1}}
Let $\g$ be a simple Lie algebra of type ${\rm B}_n$ and $\p$ be a maximal parabolic subalgebra of $\g$ associated with the subset $\pi'=\pi\setminus\{\alpha_s\}$of simple roots, with $s$ even. Then if $n>s$, one has that \begin{equation}\prod_{\Gamma\in E(\pi')}(1-e^{\delta_{\Gamma}})^{-1}=\bigl(1-e^{\varpi_s}\bigr)^{-2}\bigl(1-e^{2\varpi_s}\bigr)^{-(n-1-s/2)}.\label{cha}\end{equation}
\end{lm}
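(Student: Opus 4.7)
The plan is to enumerate the $\langle ij\rangle$-orbits $\Gamma\in E(\pi')$ explicitly and compute $\delta_{\Gamma}=w_0'd_{\Gamma}-w_0d_{\Gamma}$ for each. Since $\g$ is of type $\mathrm{B}_n$ one has $w_0=-\mathrm{Id}$, so $\delta_{\Gamma}=(w_0'+\mathrm{Id})\,d_{\Gamma}$. The standard Levi factor decomposes as $\mathfrak{r}'=\mathfrak{r}'_1\oplus\mathfrak{r}'_2$ with $\mathfrak{r}'_1$ of type $\mathrm{A}_{s-1}$ on $\pi'_1=\{\alpha_1,\ldots,\alpha_{s-1}\}$ and $\mathfrak{r}'_2$ of type $\mathrm{B}_{n-s}$ on $\pi'_2=\{\alpha_{s+1},\ldots,\alpha_n\}$. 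Correspondingly $w_0'=w_0^{(1)}w_0^{(2)}$ acts on the standard basis of $\h^*$ by $\epsilon_j\mapsto\epsilon_{s+1-j}$ for $1\le j\le s$ and $\epsilon_j\mapsto-\epsilon_j$ for $s+1\le j\le n$.

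Following the conventions of \cite[Sec.~8]{F00}, since $-w_0=\mathrm{Id}$ on $\pi$ and $-w_0^{(2)}=\mathrm{Id}$ on $\pi'_2$, the $\langle ij\rangle$-involution is trivial on $\pi'_2\sqcup\{\alpha_s\}$, whereas on $\pi'_1$ it coincides with $\alpha_i\mapsto -w_0^{(1)}(\alpha_i)=\alpha_{s-i}$. Using that $s$ is even, the orbits are therefore: the singleton $\{\alpha_s\}$; the $s/2-1$ pairs $\{\alpha_i,\alpha_{s-i}\}$ for $1\le i\le s/2-1$ together with the singleton $\{\alpha_{s/2}\}$ inside $\pi'_1$; and the $n-s$ singletons $\{\alpha_j\}$ for $s+1\le j\le n$ inside $\pi'_2$.

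Now I compute $\delta_{\Gamma}$ in each case, using $\varpi_i=\epsilon_1+\cdots+\epsilon_i$ for $i<n$ and $\varpi_n=\tfrac12(\epsilon_1+\cdots+\epsilon_n)$. For $\{\alpha_{s/2}\}$ one gets $d_{\Gamma}+w_0'd_{\Gamma}=(\epsilon_1+\cdots+\epsilon_{s/2})+(\epsilon_{s/2+1}+\cdots+\epsilon_s)=\varpi_s$. For $\{\alpha_n\}$ the half-integral contributions combine to $\varpi_s$ as well. For a pair $\{\alpha_i,\alpha_{s-i}\}$ with $1\le i<s/2$ a direct expansion of $\varpi_i+\varpi_{s-i}+w_0'(\varpi_i+\varpi_{s-i})$ gives $2\varpi_s$. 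The orbit $\{\alpha_s\}$ is fixed pointwise by $w_0'$ and yields $2\varpi_s$. Finally, for $\{\alpha_j\}$ with $s+1\le j\le n-1$, the contributions from $\epsilon_{s+1},\ldots,\epsilon_j$ cancel with their negatives, leaving $2\varpi_s$. Counting, exactly two orbits yield $\delta_{\Gamma}=\varpi_s$ and $(s/2-1)+1+(n-s-1)=n-1-s/2$ orbits yield $\delta_{\Gamma}=2\varpi_s$; grouping the factors in the product on the left-hand side of (\ref{cha}) gives the claimed formula.

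The only delicate point is to read off the correct orbit structure from \cite[Sec.~8]{F00} in our situation; once this is settled, the remaining calculations are elementary evaluations of $w_0'+\mathrm{Id}$ on sums of fundamental weights. The case $\{\alpha_n\}$ deserves particular attention, as the coefficient $\tfrac12$ in $\varpi_n$ is exactly what ensures that this orbit contributes $\varpi_s$ rather than $2\varpi_s$, thereby producing the second factor of $(1-e^{\varpi_s})^{-1}$ in the final answer.
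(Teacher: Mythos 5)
The paper does not reprove this lemma: it cites it directly from \cite[Lem.~7.8]{FL1} (with a remark about the opposite parabolic convention), so there is no in-paper proof to compare against. Your argument is correct and self-contained: since $w_0=-\mathrm{Id}$ in type $\mathrm{B}_n$, the involution $i$ is trivial and the $\langle ij\rangle$-orbits on $\pi$ are exactly the $j$-orbits you list, namely $\{\alpha_s\}$, $\{\alpha_{s/2}\}$, the pairs $\{\alpha_i,\alpha_{s-i}\}$ for $1\le i\le s/2-1$, and the singletons $\{\alpha_j\}$ for $s+1\le j\le n$; their number is $n-s/2+1$, matching ${\rm index}\,\p'$. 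Your evaluations of $(w_0'+\mathrm{Id})d_\Gamma$ in each case check out, in particular the half-integral cancellation for $\{\alpha_n\}$ producing $\varpi_s$, the telescoping for $\{\alpha_j\}$ with $s+1\le j\le n-1$ producing $2\varpi_s$, and the expansion for the pairs $\{\alpha_i,\alpha_{s-i}\}$ producing $2\varpi_s$, giving the two factors $(1-e^{\varpi_s})^{-1}$ and the $n-1-s/2$ factors $(1-e^{2\varpi_s})^{-1}$ as claimed.
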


We show below that the upper bound given in Corollary \ref{corb} (right hand of Eq. \ref{ub}) actually exists and that it coincides with the lower bound constructed in \cite{F00} (which is given by Eq. \ref{cha}).
\begin{lm}\label{eqbounds}
(1) For all $\gamma\in T$, $s(\gamma)\in\mathbb NS$ and $\gamma+s(\gamma)\neq 0$. It follows that the formal character of $Sy\bigl(\widetilde\p\bigr)$ is well defined and that we have inequality \ref{ub}.

(2) Moreover we have that \begin{equation}\prod_{\Gamma\in E(\pi')}(1-e^{\delta_{\Gamma}})^{-1}=\prod_{\gamma\in T}\bigl(1-e^{\gamma+s(\gamma)}\bigr)^{-1}.\label{egborn}\end{equation}

(3) Then by the end of Lemma \ref{Adaplm}, restriction of functions is an isomorphism from $Sy\bigl(\widetilde\p\bigr)$ to the algebra of polynomial functions $\Bbbk[y+\g_{-T}]$ on $y+\g_{-T}$.
\end{lm}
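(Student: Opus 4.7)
The plan is to establish the three parts in order, leveraging the explicit description of $S$ (subsection \ref{SetS}), the Heisenberg sets (subsection \ref{HS}), and the enumeration of $T$ from the proof of Lemma \ref{TB}. Since the case $n=s$ was already treated in \cite[Sec. 7]{FL1}, only the two subcases $n>s$, $3s/2\le n$ (with $s/2$ even or odd) and $n>s$, $3s/2>n$ need to be handled here. Throughout, the key observation is that a weight $\nu\in\h^*$ vanishes on $\h_{\pi'}$ if and only if $\nu\in\mathbb{Q}\varpi_s$, since $\pi\setminus\pi'=\{\alpha_s\}$; moreover for $s<n$ in type ${\rm B}_n$ one has $\varpi_s=\ep_1+\ep_2+\cdots+\ep_s$.

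For part (1), I will take each $\gamma\in T$ from the explicit lists in the proof of Lemma \ref{TB} and exhibit an explicit expression of $s(\gamma)$ as a sum with nonnegative integer coefficients of elements of $S$, chosen so that $\gamma+s(\gamma)$ equals either $\varpi_s$ or $2\varpi_s$. By Lemma \ref{basisB} this combination is unique in $\mathbb{Q}S$, so producing any nonnegative integer combination suffices to conclude $s(\gamma)\in\mathbb{N}S$; and since $\varpi_s\neq 0$ we have $\gamma+s(\gamma)\neq 0$. For example, for $\gamma=\ep_{s-1}+\ep_s\in T$ one takes $s(\gamma)=\sum_{i=1}^{(s-2)/2}(\ep_{2i-1}+\ep_{2i})$, giving $\gamma+s(\gamma)=\varpi_s$; for $\gamma=\ep_{2i-1}-\ep_{2i}$ with $1\le i\le s/2$ one collects the required pairs in $S^+\cup S^m$ plus one occurrence of $\ep_s$ to produce $2\varpi_s$; and similar explicit combinations work for the roots of the form $\ep_{s-(2k-1)}-\ep_{s+k}$, $\ep_1-\ep_{3s/2}$, and the roots lying in $\Delta^\pm_{\pi'_2}$. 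Since each resulting weight $\gamma+s(\gamma)$ is a nonzero positive multiple of $\varpi_s$, every weight subspace of $\prod_{\gamma\in T}\Bbbk[a_\gamma]$ is finite-dimensional, so the formal character is well-defined and Corollary \ref{corb} yields inequality \eqref{ub}.

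For part (2), I will organize the case analysis of part (1) to show that, among the $|T|=n-s/2+1$ elements of $T$, exactly two produce $\gamma+s(\gamma)=\varpi_s$ (namely $\ep_{s-1}+\ep_s$ and one other distinguished root depending on the subcase, typically $\ep_1-\ep_{3s/2}$ when $3s/2\le n$), while the remaining $n-1-s/2$ roots all produce $\gamma+s(\gamma)=2\varpi_s$. This yields
\begin{equation*}
\prod_{\gamma\in T}\bigl(1-e^{\gamma+s(\gamma)}\bigr)^{-1}=\bigl(1-e^{\varpi_s}\bigr)^{-2}\bigl(1-e^{2\varpi_s}\bigr)^{-(n-1-s/2)},
\end{equation*}
which coincides with the lower bound \eqref{cha}.

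The main obstacle will be the bookkeeping in part (1): the set $T$ depends on whether $3s/2\le n$ or $3s/2>n$, and when $3s/2\le n$ also on the parity of $s/2$; for each of these subcases, the five or six families of roots in $T$ must be treated separately, and a suitable combination in $\mathbb{N}S$ has to be exhibited for each family. Once parts (1) and (2) are in place, the lower bound from \cite{F00} and the upper bound from Corollary \ref{corb} coincide, so equality holds throughout in \eqref{ub}; then the last assertion of Lemma \ref{Adaplm} yields that restriction of functions provides the algebra isomorphism $Sy\bigl(\widetilde\p\bigr)\stackrel{\sim}{\longrightarrow}\Bbbk[y+\g_{-T}]$, proving part (3).
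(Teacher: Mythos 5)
Your overall strategy is exactly the one the paper uses: list the explicit families comprising $T$ (from the proof of Lemma \ref{TB}), for each $\gamma\in T$ exhibit $s(\gamma)$ as an explicit $\mathbb N$-combination of elements of $S$ and verify that $\gamma+s(\gamma)$ is a nonzero multiple of $\varpi_s$, then count multiplicities to match \eqref{cha}, and finally apply Lemma \ref{Adaplm}. Up to the bookkeeping you correctly anticipate, this is the paper's proof.

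However, your sketch of the count contains a concrete error in part (2). You identify the two roots yielding $\gamma+s(\gamma)=\varpi_s$ as $\ep_{s-1}+\ep_s$ and $\ep_1-\ep_{3s/2}$ (when $3s/2\le n$), and simultaneously claim that $\ep_{2i-1}-\ep_{2i}$ yields $2\varpi_s$ for all $1\le i\le s/2$. In fact the second root giving $\varpi_s$ is $\ep_{s-1}-\ep_s$, i.e.\ the member $i=s/2$ of the family $\ep_{2i-1}-\ep_{2i}$, for which $s(\gamma)=\sum_{i=1}^{(s-2)/2}(\ep_{2i-1}+\ep_{2i})+2\ep_s$. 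The root $\gamma=\ep_1-\ep_{3s/2}$ cannot give $\varpi_s$: one would need $s(\gamma)=\ep_2+\cdots+\ep_s+\ep_{3s/2}\in\mathbb NS$, but in the set $S$ built in subsection \ref{SetS} the only element carrying $\ep_{3s/2}$ with a $+$ sign is $\ep_1+\ep_{3s/2}\in S^+$, and the only elements carrying $\ep_1$ at all are $\ep_1+\ep_2$ and $\ep_1+\ep_{3s/2}$, both with $+$ sign; hence any $\mathbb N$-combination producing $\ep_{3s/2}$ with coefficient $1$ also produces $\ep_1$ with coefficient $\ge 1$, which is forbidden. By the basis property (Lemma \ref{basisB}) the coefficient vector is uniquely determined, so $\ep_1-\ep_{3s/2}$ necessarily gives $2\varpi_s$, with $s(\gamma)=(\ep_1+\ep_{3s/2})+2\ep_s+2\sum_{k=1}^{s/2-1}\bigl(\ep_{s-(2k-1)}+\ep_{s+k}\bigr)+2\sum_{k=1}^{s/2-1}\bigl(\ep_{s-2k}-\ep_{s+k}\bigr)$. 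Once this is corrected the tally $(1-e^{\varpi_s})^{-2}(1-e^{2\varpi_s})^{-(n-1-s/2)}$ comes out exactly as in the paper; so the error would surface as soon as you ran the announced case analysis, but as written the sketch is internally inconsistent and needs fixing before part (2) can be concluded.
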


\begin{proof}
Recall the set $T$ given in Proof of Lemma \ref{TB} and the set $S$ given in subsection \ref{SetS}. For every $\gamma\in T$, we will also compute the number $1+\lvert s(\gamma)\rvert$, which we will denote by $\partial_{\gamma}$ (in order to compute the degree of every homogeneous generator of weight $\gamma+s(\gamma)$, as observed in Remark \ref{WScontractionRq}).

Take $\gamma=\ep_{s-1}+\ep_s\in T$. Then one checks that $$s(\gamma)=(\ep_1+\ep_2)+(\ep_3+\ep_4)+\ldots+(\ep_{s-3}+\ep_{s-2})\in\mathbb NS$$ so that $\gamma+s(\gamma)=\ep_1+\ldots+\ep_s=\varpi_s$. Moreover one has that $\partial_{\gamma}=(s-2)/2+1=s/2$.

Take $\gamma=\ep_{s-1}-\ep_s\in T$. Then one checks that $$s(\gamma)=(\ep_1+\ep_2)+(\ep_3+\ep_4)+\ldots+(\ep_{s-3}+\ep_{s-2})+2\ep_s\in\mathbb NS$$ so that $\gamma+s(\gamma)=\varpi_s$. Moreover $\partial_{\gamma}=(s-2)/2+2+1=s/2+2$.

Take $\gamma=\ep_{s-(2k-1)}-\ep_{s+k}\in T$ with $1\le k\le\min((s-2)/2,\,n-s)$. Then one checks that \begin{align*}s(\gamma)=2\sum_{j=1}^{k-1}\bigl(\ep_{s-(2j-1)}+\ep_{s+j}\bigr)+2\sum_{j=1}^{k-1}\bigl(\ep_{s-2j}-\ep_{s+j}\bigr)+(\ep_{s-(2k-1)}+\ep_{s+k})+\\
2\ep_s+2\sum_{i=1}^{s/2-k}\bigl(\ep_{2i-1}+\ep_{2i}\bigr)\in\mathbb NS\end{align*} so that $\gamma+s(\gamma)=2\varpi_s$. Moreover
$\partial_{\gamma}=s+2k$.

Let $u\in\mathbb N$ be equal to zero if $3s/2\le n$ and otherwise $u=3s/2-n$ and take $\gamma=\ep_{2i-1}-\ep_{2i}\in T$, with $u+1\le i\le s/2-1$.  Then one checks that \begin{align*}s(\gamma)=2\sum_{j=1}^{u}\bigl(\ep_{2j-1}+\ep_{2j}\bigr)+2\sum_{j=u+1}^{i-1}\bigl(\ep_{2j-1}+\ep_{2j}\bigr)+2\ep_s+(\ep_{2i-1}+\ep_{2i})+\\
2\sum_{k=1}^{s/2-i}\bigl(\ep_{s-(2k-1)}+\ep_{s+k}\bigr)+2\sum_{k=1}^{s/2-i}\bigl(\ep_{s-2k}-\ep_{s+k}\bigr)\in\mathbb NS\end{align*} so that $\gamma+s(\gamma)=2\varpi_s$. Moreover $\partial_{\gamma}=2s-2i+2$.

Assume now that $n<3s/2$.

 Take $\gamma=\ep_{2i-1}-\ep_{2i}\in T$ with $1\le i\le [3s/4-n/2]$. One checks that \begin{align*}s(\gamma)=2\sum_{j=1}^{2i-1}\bigl(\ep_{3s-2n-j}-\ep_j\bigr)+4\sum_{j=1}^{i-1}\bigl(\ep_{2j-1}+\ep_{2j}\bigr)+2\sum_{k=1}^{n-s}\bigl(\ep_{s-(2k-1)}+\ep_{s+k}\bigr)+\\
2\sum_{k=1}^{n-s}\bigl(\ep_{s-2k}-\ep_{s+k}\bigr)+
2\ep_s+3(\ep_{2i-1}+\ep_{2i})+
2\sum_{j=i+1}^{3s/2-n-i}\bigl(\ep_{2j-1}+\ep_{2j}\bigr)\in\mathbb NS\end{align*} so that $\gamma+s(\gamma)=2\varpi_s$. Moreover $\partial_{\gamma}=2n-s+4i$.

Take $\gamma=\ep_{2i-1}-\ep_{2i}\in T$ with $[3s/4-n/2]<i\le 3s/2-n$. One checks that \begin{align*}s(\gamma)=2\sum_{j=1}^{3s-2n-2i}\bigl(\ep_{3s-2n-j}-\ep_j\bigr)+4\sum_{j=1}^{3s/2-n-i}\bigl(\ep_{2j-1}+\ep_{2j}\bigr)+\\
2\sum_{j=3s/2-n-i+1}^{i-1}\bigl(\ep_{2j-1}+\ep_{2j}\bigr)+(\ep_{2i-1}+\ep_{2i})+2\ep_s+2\sum_{k=1}^{n-s}\bigl(\ep_{s-(2k-1)}+\ep_{s+k}\bigr)+\\
2\sum_{k=1}^{n-s}\bigl(\ep_{s-2k}-\ep_{s+k}\bigr)\in\mathbb NS\end{align*}
 so that $\gamma+s(\gamma)=2\varpi_s$. Moreover $\partial_{\gamma}=5s-2n-4i+2$.

Assume now that $3s/2\le n$. 

Take $\gamma=\ep_1-\ep_{3s/2}\in T$. One checks that 

\begin{equation*}s(\gamma)=(\ep_1+\ep_{3s/2})+2\sum_{k=1}^{s/2-1}\bigl(\ep_{s-(2k-1)}+\ep_{s+k}\bigr)+2\sum_{k=1}^{s/2-1}\bigl(\ep_{s-2k}-\ep_{s+k}\bigr)+2\ep_s\in\mathbb NS\end{equation*} so that $\gamma+s(\gamma)=2\varpi_s$.
Moroever $\partial_{\gamma}=2s$.

Set $u=0$ if $s/2$ is even and $u=1$ if $s/2$ is odd.

Take $\gamma=\ep_{s+2j+1+u}-\ep_{s+2j+2+u}\in T$ with $s/4-u/2\le j\le [(n-s-2-u)/2]$. One checks that
\begin{align*}s(\gamma)=(\ep_{s+2j+1+u}+\ep_{s+2j+2+u})+2\sum_{\ell=s/4-u/2}^j\bigl(-\ep_{s+2\ell+u}-\ep_{s+2\ell+1+u}\bigr)+\\
2\sum_{\ell=s/4-u/2}^{j-1}\bigl(\ep_{s+2\ell+1+u}+\ep_{s+2\ell+2+u}\bigr)+2(\ep_1+\ep_{3s/2})+\\
2\sum_{k=1}^{s/2-1}\bigl(\ep_{s-(2k-1)}+\ep_{s+k}\bigr)+2\sum_{k=1}^{s/2-1}\bigl(\ep_{s-2k}-\ep_{s+k}\bigr)+2\ep_s\in\mathbb NS\end{align*} so that $\gamma+s(\gamma)=2\varpi_s$.
Moreover $\partial_{\gamma}=s+4j+4+2u$.

Finally take $\gamma=-\ep_{s+2j+u}+\ep_{s+2j+1+u}\in T$ with $s/4-u/2\le j\le[(n-1-s-u)/2]$. One checks that

\begin{align*}s(\gamma)=(-\ep_{s+2j+u}-\ep_{s+2j+1+u})+2\sum_{\ell=s/4-u/2}^{j-1}\bigl(\ep_{s+2\ell+1+u}+\ep_{s+2\ell+2+u}\bigr)+\\
2\sum_{\ell=s/4-u/2}^{j-1}\bigl(-\ep_{s+2\ell+u}-\ep_{s+2\ell+1+u}\bigr)+2(\ep_1+\ep_{3s/2})+\\
2\sum_{k=1}^{s/2-1}\bigl(\ep_{s-(2k-1)}+\ep_{s+k}\bigr)+2\sum_{k=1}^{s/2-1}\bigl(\ep_{s-2k}-\ep_{s+k}\bigr)+2\ep_s\in\mathbb NS\end{align*} so that $\gamma+s(\gamma)=2\varpi_s$.
Moreover $\partial_{\gamma}=s+4j+2+2u$.

In view of Eq. \ref{cha}, we obtain that Eq. \ref{egborn} holds, which completes the proof. 
\end{proof}

We now can summarize our work in the following Theorem.

\begin{thm}\label{nonsingthm}
Let $\g$ be a simple Lie algebra of type ${\rm B}_n$ ($n\ge 2$) and $\p$ be a maximal standard parabolic Lie subalgebra of $\g$ associated with the subset $\pi'=\pi\setminus\{\alpha_s\}$, with $s$ even, of simple roots. Let $\widetilde\p$ be the In\"on\"u-Wigner contraction with respect to the decomposition $\p=\mathfrak r\oplus\m$ where $\mathfrak r$ is the standard Levi factor of $\p$ and $\m$ the nilpotent radical of $\p$. 
\begin{enumerate}
\item The algebra $Sy\bigl(\widetilde\p\bigr)$ of symmetric semi-invariants associated with $\widetilde\p$ is a polynomial algebra over the base field $\Bbbk$ and admits a Weierstrass section.

\item The derived subalgebra $\widetilde\p'$ of $\widetilde\p$ is nonsingular that is, the set $\widetilde\p'^*\setminus\widetilde\p'^*_{reg}$ of singular elements (namely non regular elements, as defined in subsection \ref{index}) is of codimension greater or equal to two.
\end{enumerate}
\end{thm}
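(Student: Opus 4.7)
The proof has two parts, both largely reduced to what is already in hand.

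\textbf{Part (1).} By Remark \ref{Rqtruncation}, $\h_{\Lambda} = \{0\}$ for a maximal parabolic, so $\widetilde{\p}_{\Lambda} = \widetilde{\p}'$ by Corollary \ref{trunccor}. Corollary \ref{corb} then supplies an adapted pair $(h, y)$ for $\widetilde{\p}'$ with $y = \sum_{\gamma \in S} x_{-\gamma}$, together with the decomposition $\ad^{*}\widetilde{\p}'(y) \oplus \g_{-T} = \widetilde{\p}'^{*}$. Lemma \ref{eqbounds} checks $\gamma + s(\gamma) \neq 0$ and $s(\gamma) \in \mathbb{N}S$ for every $\gamma \in T$, and verifies that the resulting upper bound $\prod_{\gamma \in T}(1 - e^{\gamma + s(\gamma)})^{-1}$ for ${\rm ch}\,Sy\bigl(\widetilde{\p}\bigr)$ coincides with the lower bound $\prod_{\Gamma \in E(\pi')}(1 - e^{\delta_{\Gamma}})^{-1}$ from \cite{F00}. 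The final assertion of Lemma \ref{Adaplm} then yields an algebra isomorphism $Sy\bigl(\widetilde{\p}\bigr) \isomto \Bbbk[y + \g_{-T}]$, so $y + \g_{-T}$ is a Weierstrass section for $Sy\bigl(\widetilde{\p}\bigr)$ and $Sy\bigl(\widetilde{\p}\bigr)$ is polynomial.

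\textbf{Part (2).} Here I invoke the standard degree-sum criterion for nonsingularity: for an algebraic Lie algebra $\mathfrak{a}$ whose algebra $Y(\mathfrak{a})$ is polynomial on homogeneous generators of degrees $d_1, \ldots, d_k$ with $k = {\rm index}\,\mathfrak{a}$, the set of non-regular points in $\mathfrak{a}^{*}$ has codimension $\geq 2$ if and only if $\sum_{i=1}^{k} d_i = \tfrac{1}{2}\bigl(\dim\mathfrak{a} + {\rm index}\,\mathfrak{a}\bigr)$. Take $\mathfrak{a} = \widetilde{\p}'$; by Part (1), $Y\bigl(\widetilde{\p}'\bigr) = Sy\bigl(\widetilde{\p}\bigr)$ is polynomial, and by Remark \ref{WScontractionRq} its homogeneous generators are indexed by $\gamma \in T$ with degrees $\partial_{\gamma} = 1 + |s(\gamma)|$. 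Explicit values of $\partial_{\gamma}$ are read off directly from the proof of Lemma \ref{eqbounds}.

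It then remains to verify
$$\sum_{\gamma \in T} \partial_{\gamma} = \tfrac{1}{2}\bigl(\dim\widetilde{\p}' + |T|\bigr),$$
where $|T| = n - s/2 + 1$ by Lemma \ref{TB} and $\dim\widetilde{\p}' = n^2 + n - 1 + |\Delta^{+}_{\pi'}|$ with $|\Delta^{+}_{\pi'}| = s(s-1)/2 + (n-s)^{2}$ from the $B_n$ root data. The verification splits along the three regimes of Lemma \ref{eqbounds}: $n = s$ (already handled in \cite{FL1}), $s < n < 3s/2$, and $n \geq 3s/2$ (the last further subdivided by the parity of $s/2$). In each regime the summation is a finite, elementary arithmetic computation. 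The main obstacle is purely bookkeeping, since the $\partial_{\gamma}$ are given by distinct piecewise formulas on $T$ and an off-by-one in any sub-case would invalidate the conclusion; but no conceptual difficulty arises.
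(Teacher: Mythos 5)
Your argument is essentially identical to the paper's own proof: Part (1) follows from Corollary \ref{corb}, Lemma \ref{eqbounds}, and the last assertion of Lemma \ref{Adaplm}, while Part (2) invokes the same Joseph--Shafrir degree-sum criterion (the paper cites \cite[4.1]{JS} for ``nonsingular iff the fundamental semi-invariant $p$ is a scalar'' and \cite[Thm.~5.7]{JS} for $c\bigl(\widetilde\p'\bigr)-\deg p=\sum\deg f_i$), then computes $c\bigl(\widetilde\p'\bigr)=n^2+n+\tfrac{3s^2}{4}-ns-\tfrac{s}{2}$ and, like you, simply asserts $\sum_{\gamma\in T}\partial_\gamma=c\bigl(\widetilde\p'\bigr)$ without writing out the arithmetic. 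The only small omission in your write-up is the paper's observation that the fundamental semi-invariant of $\widetilde\p'$ is in fact an \emph{invariant} (since $\widetilde\p'=\widetilde\p_\Lambda$ forces $Sy\bigl(\widetilde\p'\bigr)=Y\bigl(\widetilde\p'\bigr)$ by Corollary \ref{trunccor}), a hypothesis needed to apply those results cleanly.
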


\begin{proof}
(1) is a consequence of Lemma \ref{Adaplm}.

(2) Set $c\bigl(\widetilde\p'\bigr)=1/2\bigl(\dim\widetilde\p'+{\rm index}\,\widetilde\p'\bigr)$. Since here $\widetilde\p'=\widetilde\p_{\Lambda}$ we know that the fundamental semi-invariant $p$ of $\widetilde\p'$ (as defined for example in \cite[4.1]{JS}) is an invariant by Corollary \ref{trunccor}. Moreover by \cite[4.1]{JS} the algebra $\widetilde\p'$ is nonsingular that is, $\codim\bigl(\widetilde\p'^*\setminus\widetilde\p'^*_{reg}\bigr)\ge 2$, if and only if $p$ is a scalar. Denote by $f_1,\,\ldots,\, f_{\lvert T\rvert}$ a set of homogeneous algebraically independent generators of the polynomial algebra $Sy\bigl(\widetilde\p\bigr)=Y\bigl(\widetilde\p'\bigr)$ where recall that $\lvert T\rvert$ is the index of $\widetilde\p'$. By \cite[Thm. 5.7]{JS} we have the equality
\begin{equation} c\bigl(\widetilde\p'\bigr)-\deg p=\sum_{i=1}^{\lvert T\rvert}\deg f_i.\label{nonsing}\end{equation}

Recall that ${\rm index}\,\widetilde\p'={\rm index}\,\p'=\lvert T\rvert=n-s/2+1$ by Lemma \ref{TB}. Then we have :

\begin{align*}c\bigl(\widetilde\p'\bigr)=\frac{1}{2}\Bigl(n^2+\frac{s(s-1)}{2}+(n-s)^2+n-1+n-\frac{s}{2}+1\Bigr)\\
=n^2+n+\frac{3s^2}{4}-ns-\frac{s}{2}.\end{align*}

On the other hand, the degree of every homogeneous generator $f_1,\,\ldots,\, f_{\lvert T\rvert}$ of $Y\bigl(\widetilde\p'\bigr)$ of weight $\gamma+s(\gamma)$, for $\gamma\in T$, is equal to $\partial_{\gamma}$ computed in the proof of Lemma \ref{eqbounds}. Adding these degrees, we obtain that
$$c\bigl(\widetilde\p'\bigr)=\sum_{\gamma\in T}\partial_{\gamma}.$$
It follows by Eq. \ref{nonsing} that $\deg p=0$, hence that $\widetilde\p'$ is nonsingular by what we said above.
\end{proof}

\subsection{}

\begin{Rq}\rm
With the above hypotheses, one can check easily that an adapted pair for $\widetilde\p'$ is also an adapted pair for $\p'$. But the converse is not true in general. That is why we need here to construct new adapted pairs in the degenerate case. Indeed when we compute the coadjoint orbit of an element $y$ constructed in \cite{FL1} (in the nondegenerate case), two many zeroes appear in general, then this element $y$ could not be regular in $\widetilde\p'^*$ and then $y$ could not give an adapted pair in the degenerate case. However Thm. \ref{nonsingthm}  also holds in the nondegenerate case (for the polynomiality of $Sy(\p)$ and even the existence of a Weierstrass section, it was already shown in \cite[Sec. 7]{FL1}). We also can conclude that the derived subalgebra $\p'$ of $\p$ is nonsingular. Finally a new indexation shows that the degrees computed in \cite[Lem. 7.7]{FL1} coincide with the degrees computed here in the proof of Lemma \ref{eqbounds}, which is of course what is expected, since the degrees of the homogeneous generators of the polynomial algebra $Sy(\p)=Y(\p')$ do not depend of which adapted pair was constructed for $\p'$.

\end{Rq}

\end{document}